\theoremstyle{plain}
\newtheorem{lemma}{Lemma}[section]
\newtheorem{theorem}[lemma]{Theorem}
\newtheorem{corollary}[lemma]{Corollary}
\newtheorem{proposition}[lemma]{Proposition}
\newtheorem{conjecture}[lemma]{Conjecture}
\theoremstyle{remark}
\newtheorem*{remark}{Remark}
\newtheorem{definition}[lemma]{Definition}
\newcommand{\N}{\mathbb{N}}
\newcommand{\QQ}{\mathbb{Q}}
\newcommand{\ttt}{\bm{t}}
\newcommand{\xx}{\bm{x}}
\newcommand{\yy}{\bm{y}}
\newcommand{\zz}{\bm{z}}
\newcommand{\Y}{\mathcal{Y}}
\newcommand{\kumu}{\kappa}
\newcommand{\ev}{\text{even}}
\DeclareMathOperator{\Ev}{ev}
\newcommand{\odd}{\text{odd}}
\newcommand{\PPP}{\mathcal{P}}
\DeclareMathOperator{\rk}{rk}
\def\la{\lambda}
\def\ka{\kappa}
\def\a{\alpha}
\def\si{\sigma}
\def\r{r}
\DeclareMathOperator{\RHS}{RHS}
\DeclareMathOperator{\IE}{InEx}
\DeclareMathOperator{\Aut}{Aut}
\DeclareMathOperator{\hook}{hook}
\DeclareMathOperator{\Symm}{Sym}
\def\uu{\bm{u}}
\def\vv{\bm{v}}
\author[M.~Dołęga]{Maciej Dołęga}
\address{
Wydział Matematyki i Informatyki, 
Uniwersytet im.~Adama Mickiewicza, 
Collegium Mathematicum,
Umultowska 87, 
61-614 Poznań, 
Poland, \newline \indent Instytut Matematyczny,
Uniwersytet Wrocławski,  \mbox{pl.\ Grunwaldzki~2/4,} 50-384
Wrocław, Poland}
\email{maciej.dolega@amu.edu.pl}
\author[V.~Féray]{Valentin Féray}
\address{Institut für Mathematik, Universität Zürich, Winterthurerstrasse 190, 8057 Zürich, Switzerland}
 \email{valentin.feray@math.uzh.ch}
 \thanks{
MD is supported from {\it Agence Nationale de la Recherche}, grant ANR 12-JS02-001-01 ``Cartaplus'' and from {\it NCN}, grant UMO-2015/16/S/ST1/00420.
VF is partially supported by grant SNF-149461 ``Dual combinatorics of Jack polynomials''.}
\keywords{Jack symmetric functions; Cumulants; Laplace-Beltrami operator}
\subjclass[2010]{05E05}
\title[Cumulants of Jack symmetric functions]
{Cumulants of Jack symmetric functions and $b$-conjecture}
\begin{document}

\maketitle

\begin{abstract}
Goulden and Jackson (1996) introduced, using Jack symmetric functions,
some multivariate generating series $\psi(\bm{x}, \bm{y}, \bm{z}; t,
1+\beta)$ that might be interpreted as a continuous deformation of the
generating series of rooted hypermaps.
They made the following conjecture:
the coefficients of $\psi(\bm{x}, \bm{y}, \bm{z}; t, 1+\beta)$
in the power-sum basis
are polynomials in $\beta$ 
with nonnegative integer coefficients
(by construction, these coefficients are rational functions in $\beta$).

We prove partially this conjecture, nowadays called \emph{$b$-conjecture}, by showing that coefficients of $\psi(\bm{x}, \bm{y}, \bm{z}; t, 1+ \beta)$ are polynomials in $\beta$ with rational coefficients.
A key step of the proof is a \emph{strong factorization property} of Jack polynomials
when the Jack-deformation parameter $\a$ tends to $0$,
that may be of independent interest.
\end{abstract}

\section{Introduction}

\subsection{Jack symmetric functions}

Jack \cite{Jack1970/1971}
introduced a family of symmetric polynomials ---
which are now known as \emph{Jack polynomials} $J^{(\alpha)}_\pi$ --- indexed by a partition 
and a deformation parameter $\alpha$.
From the contemporary point of view, probably the main motivation for studying Jack polynomials 
comes from the fact that they are a special case of the celebrated \emph{Macdonald polynomials} which 
\emph{``have found applications in special function theory, representation theory, algebraic geometry, 
group theory, statistics and quantum mechanics''} \cite{GarsiaRemmel2005}. 
Indeed, some surprising features of Jack polynomials \cite{Stanley1989} have led in the past to the discovery of 
Macdonald polynomials \cite{Macdonald1995}, and Jack polynomials have been regarded as a relatively easy 
case, which later allowed the 
understanding of the more difficult case of 
Macdonald polynomials
(the series of papers \cite{LapointeVinet1995,LapointeVinet1997} illustrates this very well).
A brief overview of Macdonald polynomials and their relationship to 
Jack polynomials is given in \cite{GarsiaRemmel2005}.
Jack polynomials are also interesting on their own,
for instance in the context of Selberg integrals \cite{Kadell1997}
and in theoretical physics \cite{Feigin2002,BernevigHaldane2008}. 

Finally, according to Goulden and Jackson \cite{GouldenJackson1996},
Jack polynomials are also related to hypermap enumeration,
via specific multivariate generating functions.
This relation is still partially a conjecture,
and the main result of this paper is a step towards its resolution.

\subsection{$b$-conjecture and our main result}
\label{subsec:Introduction-BConjecture}
In the following, $\xx$, $\yy$ and $\zz$ are three disjoint infinite alphabets.
Let $J_\la^{(\a)}(\xx)$ (resp. $J_\la^{(\a)}(\yy)$, $J_\la^{(\a)}(\zz)$)
be the Jack symmetric function in $\xx$ (resp. $\yy$, $\zz$)
indexed by a partition $\la$.
Let us denote by $\hook_\a(\la)$ and $\hook'_\a(\la)$ the $\a$ hook-polynomials 
(these are combinatorial factors that appear often in Jack polynomial theory; 
see Section \ref{SubsecPartitions} for the definition).
We also use the notation $\PPP$ for the set of all integer partitions 
and $|\la|$ for the size of a partition $\la$.
Goulden and Jackson \cite{GouldenJackson1996} defined a family of coefficients
$h_{\mu,\nu}^{\tau}(\a-1)$ by the following formal series identity:
\begin{multline}
\log \left( 
\sum_{\la \in \PPP} \frac{
J_\la^{(\a)}(\xx) \, J_\la^{(\a)}(\yy) \, J_\la^{(\a)}(\zz)
\, t^{|\la|}}
{\hook_\a(\la)\, \hook'_\a(\la)}
\right) \\
 = \sum_{n \ge 1} \frac{t^n}{\a \, n} \left( 
 \sum_{\mu,\nu,\tau \vdash n} h_{\mu,\nu}^{\tau}(\a-1) \,
p_\mu(\xx) \, p_\nu(\yy) \, p_\tau(\zz) \right), 
\label{EqDefH}
\end{multline}
where $\mu,\nu,\tau \vdash n$ means that $\mu$, $\nu$ and $\tau$
are three partitions of $n$ and $p_\mu$ is the power-sum symmetric function
associated with $\mu$.

This rather involved definition is motivated by the following combinatorial
interpretations for particular values of $\a$;
see~\cite[Section 1.1]{GouldenJackson1996} and references therein.
\begin{itemize}
    \item {In the case} {$\a=1$}, the quantity $h_{\mu,\nu}^{\tau}(0)$ enumerates
        {connected hypergraphs embedded into oriented surfaces} with vertex-,
        edge- and face-degree distributions given by $\mu$, $\nu$ and 
        $\tau$, respectively.
    \item {In the case} {$\a=2$}, the quantity $h_{\mu,\nu}^{\tau}(1)$ enumerates
        {connected hypergraphs embedded into non-oriented surfaces} with the same degree conditions.
\end{itemize}
Connected hypergraphs embedded into surfaces are usually called {\em maps} and are a classical
topic in enumerative combinatorics related to the computation of matrix integrals
and to the study of moduli spaces of curves, as explained in detail in the book \cite{LandoZvonkin2004}.
The logarithm in Eq. \eqref{EqDefH} is present
because we only want to count connected objects.

Note that $h_{\mu, \nu}^\tau(\a-1)$ depends on the parameter $\a$,
and describing it as a function of $\beta := \alpha-1$ might seem be artificial.
However, this shift seems to be the right one for finding a combinatorial interpretation of $h_{\mu, \nu}^\tau(\beta)$, as suggested by Goulden and Jackson \cite{GouldenJackson1996} in the following conjecture.

\begin{conjecture}[$b$-conjecture]
\label{conj:BConj}
For all partitions $\tau, \mu, \nu \vdash n \geq 1$, the quantity $h_{\mu, \nu}^\tau(\beta)$ is a polynomial in $\beta$ with nonnegative, integer coefficients. Moreover, there exists a statistics $\eta$ on maps such that
\begin{equation}
\label{eq:StatisticOnMaps}
h_{\mu, \nu}^\tau(\beta) = \sum_{\mathcal{M}}\beta^{\eta(\mathcal{M})},
\end{equation}
where the summation index runs over all rooted, bipartite maps $\mathcal{M}$ with face
distribution $\tau$, black vertex distribution $\mu$ and white vertex distribution $\nu$, and $\eta(\mathcal{M})$ is a
nonnegative integer equals to $0$ if and only if $\mathcal{M}$ is orientable.
\end{conjecture}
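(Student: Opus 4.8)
The plan is to combine the differential structure of the generating series $\psi$ with an explicit combinatorial decomposition of rooted bipartite maps, and to prove that the two satisfy one and the same recursion. On the analytic side, the starting point is that each Jack symmetric function $J_\la^{(\a)}$ is an eigenfunction of the Laplace--Beltrami (Sekiguchi--Debiard) operator, with an eigenvalue that is a linear function of the deformation parameter $\a$ built from the contents of the boxes of $\la$. Applying this operator in one of the three alphabets, say $\zz$, to the summand of Eq.~\eqref{EqDefH}, and using that it acts by multiplication by the eigenvalue, one transfers the eigenvalue relation into a second-order partial differential operator $\Theta$ acting on the kernel $\sum_\la J_\la^{(\a)}(\xx)\,J_\la^{(\a)}(\yy)\,J_\la^{(\a)}(\zz)\,t^{|\la|}/(\hook_\a(\la)\,\hook'_\a(\la))$, expressed in the power-sum variables $p_i(\xx),p_i(\yy),p_i(\zz)$. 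After taking the logarithm this becomes a closed recursion for the connected coefficients $h_{\mu,\nu}^\tau(\beta)$, expressing each in terms of coefficients indexed by partitions of strictly smaller total size; since $\beta=\a-1$ enters only through the linear part of the eigenvalue, each step of the recursion adds at most one factor of $\beta$.

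On the combinatorial side, I would equip each rooted bipartite map $\M$ with a \emph{measure of non-orientability} $\eta(\M)$ defined by iterated root-edge deletion, in the spirit of La Croix. Deleting the root edge produces a finite family of smaller rooted maps, and each deletion is classified according to how the root edge meets the rest of the surface (bridge, handle, or cross-cap type); the cross-cap and twisted-handle cases are exactly those recording non-orientability, and they carry the weight $\beta$. Writing $\widetilde h_{\mu,\nu}^\tau(\beta):=\sum_\M \beta^{\eta(\M)}$ for the corresponding generating polynomial, where $\M$ ranges over rooted bipartite maps with black vertex distribution $\mu$, white vertex distribution $\nu$ and face distribution $\tau$, this decomposition yields a recursion for $\widetilde h_{\mu,\nu}^\tau$ lowering the number of edges at each step. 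By construction $\widetilde h_{\mu,\nu}^\tau(\beta)$ is a polynomial with nonnegative integer coefficients whose constant term counts precisely the maps with $\eta(\M)=0$, i.e.\ the orientable ones.

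The heart of the argument is then to show that the analytic recursion for $h_{\mu,\nu}^\tau(\beta)$ and the combinatorial recursion for $\widetilde h_{\mu,\nu}^\tau(\beta)$ coincide term by term, so that the two families agree by induction on $n=|\tau|$. The boundary data are already understood: at $\beta=0$ the analytic recursion collapses to the one governing orientable bipartite maps, matching the classical $\a=1$ interpretation recalled in the introduction, while the evaluation $\beta=1$ (that is, $\a=2$) must recover the count of all bipartite maps, orientable or not. Once the coincidence is established, integrality, nonnegativity, and the property that $\eta(\M)=0$ if and only if $\M$ is orientable all follow at once from the manifest combinatorial meaning of $\widetilde h_{\mu,\nu}^\tau$, thereby proving the full conjecture.

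I expect the decisive obstacle to be exactly this matching of the two recursions in full generality. The operator $\Theta$ mixes the three alphabets in a way that does not transparently correspond to a single root-edge deletion, and controlling the coefficient of each monomial $\beta^k$ demands a bijective (or sign-reversing) argument reconciling the algebraic cancellations introduced by the logarithm with the geometric case analysis of the deletion. When one of $\mu,\nu,\tau$ has a single part the two sides can be compared more or less directly, which is why those cases are tractable; but in general a robust combinatorial model for the intermediate non-orientable configurations is still lacking. Supplying such a model---or, failing that, proving directly that the Laplace--Beltrami recursion maps nonnegative integer polynomials with the correct constant term to polynomials of the same kind---is the crux on which a complete proof rests.
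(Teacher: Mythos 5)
You have not proved the statement, and indeed the paper does not either: \cref{conj:BConj} is explicitly left open there, and the paper's actual contribution is only the partial result \cref{theo:Polynomiality} (polynomiality in $\beta$ with \emph{rational} coefficients and a degree bound), not nonnegativity, integrality, or the existence of the statistics $\eta$. Your proposal is a program, not a proof, and you concede as much in the final paragraph: the ``decisive obstacle'' you name --- matching the Laplace--Beltrami recursion for $h_{\mu,\nu}^\tau(\beta)$ with a root-edge-deletion recursion for $\sum_{\M}\beta^{\eta(\M)}$ --- is not a technical detail to be supplied later; it \emph{is} the conjecture. Everything before that point is a summary of known partial progress (La Croix's measure of non-orientability and the cases where one of $\mu,\nu,\tau$ has a single part, as in the works of Brown--Jackson, La Croix, and Kanunnikov--Vassilieva cited in the paper), so the proposal reduces the open problem to itself.

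Beyond this structural gap, two concrete steps in your sketch would fail as written. First, applying the Laplace--Beltrami operator in one alphabet multiplies the summand of \eqref{EqDefH} by the eigenvalue $\Ev(\la)=\a b(\la)-b(\la^t)+(N-1)|\la|$, which does not lower $|\la|$; after rewriting in power sums one obtains identities mixing coefficients indexed by partitions of the \emph{same} size $n$, so a ``closed recursion ... of strictly smaller total size'' with controlled $\beta$-degree is not automatic and is only known to take a tractable form in the one-part cases. Second, your remark that ``each step of the recursion adds at most one factor of $\beta$'' silently assumes polynomiality, whereas a priori $h_{\mu,\nu}^\tau(\beta)$ is only a rational function: the denominators $\hook_\a(\la)\,\hook'_\a(\la)$ in \eqref{EqDefH} vanish at $\a=0$ (note $\hook'_\a(\la)$ carries the factor $\a^{\la_1}$, cf.\ \eqref{EqHPrimeHDPrime}), so poles at $\a=0$ must be ruled out by a genuine cancellation. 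The paper handles exactly this via two nontrivial inputs absent from your sketch: the prior result that the only possible pole is at $\a=0$ (\cref{CorNoPolesOutsideZero}), and the strong factorization / small-cumulant property of Jack polynomials as $\a\to 0$ (\cref{ThmSFJack}, \cref{theo:CumulantsJack}), combined through \cref{LemLogCumulants} to show the logarithm in \eqref{EqDefH} is $O(\a^{-1})$. If you want a route to the full conjecture, the honest statement of your plan's status is your own closing sentence; if you want the provable part, the cumulant mechanism, not the recursion, is what does the work.
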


This conjecture is still open.
The thesis of La Croix \cite{LaCroix2009} gives a number of evidences for it,
and gives a good account of what is known so far.
In particular, some constructions for a candidate statistics $\eta$ have been given,
establishing particular cases of the conjecture 
\cite{BrownJackson2007,LaCroix2009,KanunnikovVassilieva2014}.
However, there is not much known about the structure of $h_{\mu, \nu}^\tau(\beta)$ for arbitrary partitions $\tau, \mu, \nu \vdash n$.
Strictly from the construction they are rational functions in $\beta$
with rational coefficients.
Our main result in this paper is a proof of the following
polynomiality result for 
$h_{\mu, \nu}^\tau(\beta)$ for all partitions $\tau, \mu, \nu \vdash n \geq 1$.

\begin{theorem}
  \label{theo:Polynomiality}
  For all partitions $\tau, \mu, \nu \vdash n \geq 1$, the quantity $h_{\mu, \nu}^\tau(\beta)$ is 
  a polynomial in $\beta$ of degree $2+n-\ell(\tau) - \ell(\mu) - \ell(\nu)$ with rational coefficients.
\end{theorem}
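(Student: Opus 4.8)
The key quantities $h_{\mu,\nu}^\tau(\beta)$ are defined in \eqref{EqDefH} through the coefficients of the logarithm of a symmetric-function series built from Jack polynomials. My strategy is to analyze the analytic/algebraic behavior of these coefficients as functions of $\alpha$, and to extract polynomiality in $\beta = \alpha - 1$ from a good understanding of the $\alpha$-dependence of Jack polynomials themselves.

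**Understanding the proof.**

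Let me reconstruct the proof of Theorem \ref{theo:Polynomiality}.

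First, I want to express $h_{\mu,\nu}^\tau(\alpha-1)$ in a form that isolates its dependence on $\alpha$. Writing $\theta_\mu^\lambda(\alpha)$ for the coefficients expressing $J_\lambda^{(\alpha)}$ in the power-sum basis, one obtains (after taking the logarithm, which corresponds combinatorially to restricting to connected objects) a formula of the shape
\begin{equation}
\label{eq:connectedSum}
h_{\mu,\nu}^\tau(\alpha-1) = \alpha\, n \sum_{\lambda \vdash n} \frac{1}{\hook_\alpha(\lambda)\,\hook'_\alpha(\lambda)}\, \theta_\mu^\lambda(\alpha)\,\theta_\nu^\lambda(\alpha)\,\theta_\tau^\lambda(\alpha) \cdot (\text{Möbius/connectedness correction}),
\end{equation}
where the correction term comes from the plethystic structure of the logarithm and involves an inclusion–exclusion over set partitions of the supporting boxes. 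The point is that each individual factor here is a rational function of $\alpha$.

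The core of the argument is a careful study of the limiting behavior. The crucial input is the \emph{strong factorization property} advertised in the abstract: as $\alpha \to 0$, the suitably normalized Jack characters $\theta_\mu^\lambda(\alpha)$ factorize over the connected components in a controlled way. This factorization is what guarantees that the apparent poles in $\alpha$ coming from the hook products $\hook_\alpha(\lambda)\hook'_\alpha(\lambda)$ in the denominator are cancelled by corresponding vanishing in the numerator after the connectedness correction is applied.

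The argument then proceeds in several steps. One first establishes that $h_{\mu,\nu}^\tau(\beta)$ is a rational function of $\beta$ whose only possible poles lie at a controlled finite set of values of $\alpha$ (those making some hook product vanish, i.e.\ $\alpha$ a negative rational of bounded denominator, together with $\alpha = 0$). One then shows, using the factorization property at $\alpha = 0$ and analogous control near each potential pole, that the residue at each such candidate pole in fact vanishes — the combinatorial cancellation encoded in the logarithm kills the singularities. Once one knows $h_{\mu,\nu}^\tau(\beta)$ is a rational function of $\beta$ with no poles, it must be a polynomial.

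Finally, the degree bound $2 + n - \ell(\tau) - \ell(\mu) - \ell(\nu)$ follows by tracking the degree in $\beta$ (equivalently in $\alpha$) through the expansion: the top-degree contribution is governed by the leading $\alpha$-behavior of the Jack characters, and the Euler-characteristic-type quantity $2 + n - \ell(\tau) - \ell(\mu) - \ell(\nu)$ is precisely twice the maximal genus of a map with the prescribed face and vertex degree distributions $\tau$, $\mu$, $\nu$. This matches the expected interpretation in which $\eta$ counts a non-orientability defect bounded by twice the genus.

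**Main obstacle.**

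The hard part is the cancellation of the poles. Knowing that $h_{\mu,\nu}^\tau(\beta)$ is \emph{a priori} rational is routine; the difficulty is proving it has no poles in $\beta$. This requires genuine structural control over Jack polynomials near the degenerate value $\alpha = 0$, far from the classical values $\alpha = 1$ (Schur) and $\alpha = 2$ (zonal). The strong factorization property is exactly the tool designed to supply this control: it reduces the behavior of the normalized characters on a partition to a product over the connected pieces, which is precisely the structure the logarithm/Möbius correction in \eqref{eq:connectedSum} is built to exploit, so that numerator vanishing matches denominator vanishing order by order. Establishing this factorization, presumably via the action of the Laplace–Beltrami operator (as hinted by the keywords) that characterizes Jack polynomials as its eigenfunctions, is the technical heart of the paper.
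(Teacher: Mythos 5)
Your overall strategy (establish rationality, then kill every possible pole, with the hard case $\a=0$ handled by a factorization/cumulant cancellation) points in the right direction, but there are genuine gaps. The most serious one is your treatment of the poles away from $\a=0$: you propose to rule out poles at the negative rationals where $\hook_\a(\la)\hook'_\a(\la)$ vanishes by ``analogous control near each potential pole.'' No such control exists or is plausible: the strong factorization property is specifically a statement about the degeneration $\a\to 0$, where $J_\la^{(0)}$ becomes a scaled elementary symmetric function and $J^{(0)}_{\la^1\oplus\la^2}=J^{(0)}_{\la^1}J^{(0)}_{\la^2}$; nothing analogous holds at other zeros of the hook products. The paper never needs such control: it invokes the previously proven polynomiality of the coefficients $c_{\mu,\nu}^\tau$ of the series $\phi$ (the version \emph{without} logarithm) from \cite{DolegaFeray2014}, together with the relation $\psi/\a = t\frac{\partial}{\partial t}\log\phi$, to conclude (\cref{CorNoPolesOutsideZero}) that the only possible pole of $h_{\mu,\nu}^\tau$ is at $\a=0$; all the new work is then concentrated at that single point.

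Second, the cancellation mechanism at $\a=0$ is only gestured at, and your displayed formula is not a valid identity: the logarithm does not act summand-by-summand over $\la\vdash n$, so $h_{\mu,\nu}^\tau$ cannot be written as a sum over single partitions of $n$ with a multiplicative ``connectedness correction.'' You also misstate the factorization input: it concerns Jack \emph{polynomials} evaluated at entrywise sums $\la^1\oplus\la^2$ of partitions, not Jack characters ``factorizing over connected components.'' What actually makes the orders match in the paper is: (i) the small cumulant property of the single function $F(\la)=J_\la^{(\a)}(\xx)J_\la^{(\a)}(\yy)J_\la^{(\a)}(\zz)/\bigl(\hook_\a(\la)\hook''_\a(\la)\bigr)$, which requires \cref{theo:CumulantsJack} \emph{and} the small cumulant property of the hook products (\cref{PropKumuHook,PropKumuHook2}) together with stability under products and quotients (\cref{corol:stable_product}); and (ii) \cref{LemLogCumulants}, which rewrites the logarithm of the generating series as $\sum_{r\ge 1}\frac{1}{r!\,\a^r}\sum_{(j_1,\dots,j_r)}\kumu^F(1^{j_1},\dots,1^{j_r})\,t_{j_1}\cdots t_{j_r}$, a sum over decompositions of partitions into \emph{columns}, where splitting the factor $\a^{\la_1}\prod_i m_i(\la^t)!$ off from $\hook'_\a(\la)$ (\cref{EqHPrimeHDPrime}) is essential. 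Only then does $\kumu^F=O(\a^{r-1})$ against $1/\a^r$ give $O(\a^{-1})$, matching the prefactor $1/\a$ in \eqref{EqDefH}. Without a precise identity of this kind, ``numerator vanishing matches denominator vanishing order by order'' is an expectation, not a proof. Finally, your degree bound is asserted via a genus heuristic but not proven; the paper deduces it from La Croix's results \cite{LaCroix2009}.
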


Unfortunately, the nonnegativity and the integrality of the coefficients seem out of reach with our approach.
However, the polynomiality could be useful in the investigation of \Cref{conj:BConj}.
In particular, the first author has recently found a combinatorial
description of the top-degree part of $h_{\mu, \nu}^{(n)}(\beta)$ \cite{Dolega2016}.
\Cref{theo:Polynomiality} is one of the ingredients of the proof.

\subsection{Strong factorization of Jack polynomials}
A key step in our proof is a \emph{strong factorization property} for Jack polynomials
when $\a$ tends to zero.
To state it, let us introduce a few notations.
If $\lambda^1$ and $\lambda^2$ are partitions,
we denote by $\lambda^1 \oplus \lambda^2$ their entry-wise sum.
If $\lambda^1,\dots,\lambda^r$ are partitions
and $I$ a subset of $[r]:=\{1,\dots,r\}$,
then we denote 
\[\la^I:= \bigoplus_{i \in I} \la^i.\]
\begin{theorem}
    Let $\lambda^1,\cdots,\lambda^r$ be partitions.
    Then
    \begin{equation}
        \prod_{I \subseteq [r]} \big(J_{\la^I}^{(\a)}\big)^{(-1)^{|I|}}
        =1+O(\a^{r-1}),
        \label{EqSFJack}
    \end{equation}
    \label{ThmSFJack}
when $\a \to 0$.
\end{theorem}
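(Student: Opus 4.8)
The plan is to take logarithms and to recognise the left-hand side of \eqref{EqSFJack} as an $r$-fold \emph{mixed difference} of $\log J^{(\a)}$ for the operation $\oplus$. For a function $F$ on partitions, write $\Delta^{(r)}F := \sum_{I\subseteq[r]}(-1)^{r-|I|}F(\la^I)$; then $\log\prod_{I}\big(J_{\la^I}^{(\a)}\big)^{(-1)^{|I|}}=(-1)^r\,\Delta^{(r)}\big(\log J^{(\a)}\big)$. I would first make sense of this inside the fraction field of $\Lambda\otimes\mathbb{Q}(\a)$: numerator and denominator of the product are homogeneous of equal degree, so it is a well-defined degree-$0$ rational expression, and its value at $\a=0$ is $1$. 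This last point is the statement that $J^{(0)}_\la$ is multiplicative for $\oplus$, and it is the base case of the whole argument. Indeed, the multiset of column lengths of $\mu\oplus\nu$ is the union of those of $\mu$ and of $\nu$ (equivalently $(\mu\oplus\nu)_c=\mu_c+\nu_c$ for every $c$), and since $J^{(0)}_\la=\big(\prod_j(\la'_j)!\big)\,e_{\la'}$ both factors are products over the two pieces. Writing $\log J^{(\a)}_\la=\sum_{k\ge0}\a^k L_k(\la)$, the theorem then reduces to the \emph{degree claim}: the coefficient $L_k$ has $\oplus$-degree at most $k+1$, i.e.\ $\Delta^{(r)}L_k\equiv0$ whenever $r>k+1$. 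Granting this, $\Delta^{(r)}\big(\log J^{(\a)}\big)=\sum_{k\ge r-1}\a^k\,\Delta^{(r)}L_k=O(\a^{r-1})$, which is exactly \eqref{EqSFJack}; the base case above is the instance $k=0$.

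The engine for the degree claim should be the Laplace--Beltrami (Sekiguchi--Debiard) operator $D_\a$, for which $D_\a P^{(\a)}_\la=\varepsilon_\la(\a)\,P^{(\a)}_\la$ with $\varepsilon_\la(\a)=\a\,n(\la')-n(\la)$, where $n(\la)=\sum_i(i-1)\la_i$ and $n(\la')=\sum_i\binom{\la_i}{2}$. Under $\oplus$ the first term is additive, $n(\mu\oplus\nu)=n(\mu)+n(\nu)$, while $n\big((\mu\oplus\nu)'\big)=n(\mu')+n(\nu')+\sum_i\mu_i\nu_i$, so that the correction $\sum_i\mu_i\nu_i$ is bilinear in the pieces. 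Hence the coefficient of $\a^0$ in $\varepsilon_\la$ has $\oplus$-degree $1$ and that of $\a^1$ has $\oplus$-degree $2$: the eigenvalue itself already satisfies the degree claim. Conceptually this happens because $\oplus$ acts on the anisotropic cell contents $\a(j-1)-(i-1)$ by shifting the appended columns by multiples of $\a$, so each further merge of a piece contributes one more derivative and one more factor of $\a$. This content-shift mechanism is what I would try to propagate from the spectrum to the polynomial.

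The hard part, and the main obstacle, will be precisely this transfer from eigenvalues to $\log J^{(\a)}_\la$. I would write $D_\a=D_0+\a D_1$ and run Rayleigh--Schrödinger perturbation theory around $\a=0$: with $P^{(\a)}_\la=\sum_m\a^m P^{[m]}_\la$, $P^{[0]}_\la=e_{\la'}$, the recursion $(D_0+n(\la))P^{[m]}_\la=(n(\la')-D_1)P^{[m-1]}_\la$ determines the corrections, the ambiguity inside the degenerate $D_0$-eigenspace being fixed by the triangular normalisation of $P^{(\a)}_\la$. The aim is a double induction, on the order $m$ in $\a$ and on the number $r$ of pieces, showing that $\Delta^{(r)}P^{[m]}_\la$, and hence $\Delta^{(r)}L_k$, vanishes once $r>k+1$, using that the inhomogeneous term is built only from $D_1$ and from $\varepsilon_\la$, each of which raises the $\oplus$-degree by at most one per power of $\a$. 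The genuine difficulty is that $\varepsilon_\la(\a)$ is highly degenerate at $\a=0$ (for instance $n$ agrees on $(3,3)$ and $(4,1,1)$), so inverting $D_0+n(\la)$ and controlling the resolvent as a function of $\la$ under $\oplus$ is delicate. To circumvent the degeneracy I would replace the single operator $D_\a$ by the full commuting Sekiguchi--Debiard family, whose joint spectrum on the $P^{(\a)}_\la$ is simple and is expressed through symmetric functions of the anisotropic contents; this should make the perturbation theory nondegenerate and let the content-shift mechanism be pushed through every order in $\a$, yielding the degree claim and with it \eqref{EqSFJack}.
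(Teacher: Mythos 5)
Your reduction is sound as far as it goes: passing to logarithms, the statement is indeed equivalent to the claim that $[\a^k]\log J^{(\a)}_\la$ has $\oplus$-degree at most $k+1$, and your observation that the Laplace--Beltrami eigenvalue already satisfies this claim (additivity of $b(\la^t)$ under $\oplus$, bilinearity of the correction to $b(\la)$, in the paper's notation) is exactly the mechanism the paper exploits in \cref{prop:CumulantOfPartitionsVanishes}. The problem is that everything after this point is a plan rather than a proof: the ``transfer from eigenvalues to $\log J^{(\a)}_\la$'', which you yourself call the hard part, is precisely the content of the theorem, and your proposed double induction via Rayleigh--Schr\"odinger perturbation theory is never executed. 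Worse, the obstacle you flag is real and your proposed remedy does not remove it: the Sekiguchi--Debiard joint spectrum separates partitions only for \emph{generic} $\a$, while at the point $\a=0$ around which you perturb, every joint eigenvalue degenerates to a symmetric function of $(0,-1,\dots,-(N-1))$, independent of $\la$. So enlarging the commuting family leaves the degenerate perturbation problem intact (one would have to subtract constants, divide by $\a$, prove the rescaled operators have limits, and redo the analysis --- none of which is sketched), and the inversion of $D_1+\text{const}$ on which your recursion rests remains undefined on the relevant eigenspace.

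It is worth contrasting this with how the paper gets around the same degeneracy, because the mechanism is genuinely different and does not require inverting anything. The paper works with cumulants rather than logarithms and establishes two facts about $f=[\a^j]\kappa_{[r]}(\uu)$ for $j\le r-2$: first, an exact identity (\cref{eq:BeltramiCumulantExpansion}, via \cref{lem:A1SimpleForm,lem:A2SimpleForm} and induction on $r$) showing that $f$ is an eigenvector of $D_1$ with the eigenvalue attached to $\la^{[r]}$; second, a support condition (\cref{cor:CumulantsTopMonomialExpansion}) saying that $f$ is a linear combination of monomials $m_\mu$ with $\mu$ \emph{strictly} below $\la^{[r]}$ in dominance order --- and this second fact rests on the strong factorization property of the hook polynomials $\hook_\a$ (\cref{PropKumuHook}), a combinatorial ingredient with no counterpart in your proposal. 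Stanley's \cref{prop:Laplace} (partitions with equal $b$-value are equal or dominance-incomparable) then forces any such eigenvector with such support to vanish (\cref{cor:EigenvectorsVanish}). In other words, degeneracy is defeated by triangularity with respect to dominance order, not by making the spectrum simple. If you want to salvage your perturbative scheme, this support-plus-incomparability argument is the missing idea you would need in place of the resolvent.
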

Here, to give sense to the $O$ notation, we consider $J_{\la^I}^{(\a)}$ as a function of 
a real variable $\a$. 
Since all involved quantities are rational functions,
it is however also possible to think of $\a$ as a formal parameter;
see \cref{def:Osymbol} for the meaning of $O(\a^{r-1})$ in this context.

The exponent $(-1)^{|I|}$ may be a bit disturbing so let us unpack the notation
for small values of $r$.
\begin{itemize}
    \item For $r=2$, \cref{EqSFJack} writes as
        \[ \frac{J^{(\a)}_{\la^1 \oplus \la^2}}{J^{(\a)}_{\la^1}J^{(\a)}_{\la^2}}
        = 1+O(\a). \]
        Since we have rational functions,
        it is equivalent to say that, for $\a=0$,
        one has the factorization property 
        $J^{(0)}_{\la^1 \oplus \la^2}=J^{(0)}_{\la^1}J^{(0)}_{\la^2}$.
        This is indeed true and follows from an explicit expression
        for $J^{(0)}_{\la}$ given by Stanley;
        see \cite[Proposition 7.6]{Stanley1989} 
        or \cref{eq:JackA0} in this paper.
        Thus, in this case, our theorem does not give anything new.
    \item For $r=3$, \cref{EqSFJack} writes as
        \[\frac{J^{(\a)}_{\la^1 \oplus \la^2 \oplus \la^3} \, J^{(\a)}_{\la^1} \, J^{(\a)}_{\la^2}
        J^{(\a)}_{\la^3}}{J^{(\a)}_{\la^1 \oplus \la^2} J^{(\a)}_{\la^1 \oplus \la^3} J^{(\a)}_{\la^2 \oplus \la^3}}
                = 1+O(\a^2). \]
        Using the above case $r=2$, it is easily seen that the left-hand side is $1+O(\a)$.
        However, our theorem says more and asserts that it is $1+O(\a^2)$,
        which is not trivial at all.
\end{itemize}
This explains the terminology {\em strong factorization property}.\medskip

The theorem has an equivalent form that uses the notion of {\em cumulants of Jack polynomials}
--- see \cref{sec:cumulants} for comments on the terminology.
For partitions $\la^1,\dots,\la^r$, we denote
\[\ka^J(\la^1,\dots,\la^r)=
   \sum_{\substack{\pi \in \PPP([r]) } } 
   \mu(\pi, \{H\}) \prod_{B \in \pi} J_{\la^B}.
\]
Here, the sum is taken over set partitions $\pi$ of $[r]$ and
$\mu$ stands for the Möbius function of the set partition lattice;
the Reader not familiar with these concepts can have a look to \cref{SubsecSetPartitions}. 
For example
\begin{align*}
   \ka^J(\la^1,\la^2) & = J^{(\a)}_{\lambda^1\oplus\lambda^2}- 
                                      J^{(\a)}_{\lambda^1}J^{(\a)}_{\lambda^2}, \\ 
   \ka^J(\la^1,\la^2,\la^3) & = J^{(\a)}_{\lambda^1\oplus\lambda^2\oplus\lambda^3}- 
                                      J^{(\a)}_{\lambda^1}J^{(\a)}_{\lambda^2\oplus\lambda^3}\\  & \qquad -
                                      J^{(\a)}_{\lambda^2}J^{(\a)}_{\lambda^1\oplus\lambda^3}-
                                      J^{(\a)}_{\lambda^3}J^{(\a)}_{\lambda^1\oplus\lambda^2} + 2J^{(\a)}_{\lambda^1}J^{(\a)}_{\lambda^2}J^{(\a)}_{\lambda^3}.
\end{align*}
We then have the following estimate for cumulants of Jack polynomials
\begin{theorem}
\label{theo:CumulantsJack}
For any partitions $\la^1, \dots, \la^r$, one has
\begin{equation}
    \label{eq:AsymptFactoJack}
    \kumu^J(\la^1,\dots,\la^r) = O(\a^{r-1}).
\end{equation}
\end{theorem}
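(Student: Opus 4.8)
The plan is to derive \cref{theo:CumulantsJack} from the strong factorization property \cref{ThmSFJack}, exploiting the standard duality between the partition-lattice cumulant defining $\kumu^J$ and the boolean-lattice Möbius product appearing in \cref{EqSFJack}. Write $a_I := J^{(\a)}_{\la^I}$ for $I\subseteq[r]$, so that $a_\emptyset=1$. Since $\la^I=\bigoplus_{i\in I}\la^i$ has size $\sum_{i\in I}|\la^i|$, each ratio $a_I\big/\prod_{i\in I}a_i$ is homogeneous of degree $0$ and, by iterating the factorization $J^{(0)}_{\la^1\oplus\la^2}=J^{(0)}_{\la^1}J^{(0)}_{\la^2}$ recalled after \cref{ThmSFJack}, it equals $1+O(\a)$; in particular it is an invertible power series in $\a$ with constant term $1$, so its logarithm $\tilde c_I:=\log\!\big(a_I\big/\prod_{i\in I}a_i\big)$ is well defined, with $\tilde c_{\{i\}}=\tilde c_\emptyset=0$.

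First I would record the input coming from \cref{ThmSFJack}. Applying \cref{EqSFJack} not only to $[r]$ but to every subset $S\subseteq[r]$ (that is, to the subfamily $(\la^i)_{i\in S}$), I claim that the boolean Möbius transform
\[
c_S:=\sum_{J\subseteq S}(-1)^{|S|-|J|}\,\tilde c_J
\]
equals $(-1)^{|S|}\log\big(\prod_{J\subseteq S}a_J^{(-1)^{|J|}}\big)$ for $|S|\ge2$, the singleton logarithms cancelling in the alternating sum; hence $c_S=O(\a^{|S|-1})$ by \cref{EqSFJack}. Möbius inversion then gives $\tilde c_B=\sum_{J\subseteq B}c_J=\sum_{J\subseteq B,\,|J|\ge2}c_J$, so that for every set partition $\pi\in\PPP([r])$,
\[
\prod_{B\in\pi}a_B=\Big(\prod_{i\in[r]}a_i\Big)\exp\!\Big(\sum_{\substack{J\prec\pi\\ |J|\ge2}}c_J\Big),
\]
where $J\prec\pi$ means that $J$ is contained in a single block of $\pi$.

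The combinatorial core comes next. Substituting the last display into the definition of $\kumu^J$ and factoring out $\prod_i a_i$, I would expand each exponential as a power series; the coefficient of a fixed monomial $c_{J_1}\cdots c_{J_m}$ with all $|J_k|\ge2$ then carries the scalar $\sum_{\pi:\,J_1,\dots,J_m\prec\pi}\mu(\pi,\{[r]\})$. The partitions $\pi$ with $J_1,\dots,J_m\prec\pi$ are exactly those lying above the partition $\sigma$ whose blocks are the connected components of the hypergraph on $[r]$ with hyperedges $J_1,\dots,J_m$; hence by the defining identity $\sum_{\pi\ge\sigma}\mu(\pi,\{[r]\})=\delta_{\sigma,\{[r]\}}$ this scalar vanishes unless that hypergraph is connected and spanning. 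For a connected spanning hypergraph on $r$ vertices, inserting the hyperedges one at a time decreases the number of components (initially $r$) by at most $|J_k|-1$ at step $k$, so $\sum_k(|J_k|-1)\ge r-1$; since $c_{J_k}=O(\a^{|J_k|-1})$, every surviving monomial is $O\big(\a^{\sum_k(|J_k|-1)}\big)=O(\a^{r-1})$. As $\prod_i a_i$ is regular at $\a=0$, this yields $\kumu^J(\la^1,\dots,\la^r)=O(\a^{r-1})$, as claimed. The computation is reversible, which also explains why \cref{ThmSFJack,theo:CumulantsJack} are equivalent.

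I expect the main obstacle to be not the combinatorics above but the bookkeeping of the $O(\a^{\bullet})$ estimates through the logarithms, exponentials, and the reorganization of the exponential series, objects which a priori live in a completion of $\Lambda\otimes\QQ(\a)$ rather than in $\Lambda\otimes\QQ(\a)$ itself. I would handle this by working throughout with the degree-zero ratios $a_I\big/\prod_{i}a_i$ --- honest invertible power series in $\a$ with constant term $1$, for which all logarithms and exponentials are legitimate --- and by observing that, because each $c_J$ with $|J|\ge2$ is $O(\a)$, only finitely many monomials $c_{J_1}\cdots c_{J_m}$ contribute to any fixed power of $\a$; this makes the term-by-term reorganization valid order by order, in accordance with \cref{def:Osymbol}.
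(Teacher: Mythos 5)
There is a genuine gap, and it is structural: your proof is circular relative to the paper's logic. You take the strong factorization property \cref{ThmSFJack} as an input (``hence $c_S=O(\a^{|S|-1})$ by \cref{EqSFJack}''), but \cref{ThmSFJack} has no independent proof --- in the paper it is \emph{deduced} from \cref{theo:CumulantsJack} via the equivalence of \cref{PropEquivalenceSCQF}. The two theorems are equivalent reformulations of the same estimate, and the paper states explicitly that it proves the cumulant form in \cref{sec:proof} and obtains the factorization form for free. So what your argument establishes is only (a logarithmic variant of) the implication ``strong factorization $\Rightarrow$ small cumulants'', i.e.\ one direction of \cref{PropEquivalenceSCQF}: your quantities $c_J$ play the role of $\log\left(1+T_J(\uu)\right)$, and your hypergraph-connectivity argument with the rank bound $\sum_k(|J_k|-1)\ge r-1$ is exactly the M\"obius/join computation in the first half of that proposition's proof. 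That part is correct, but it is the easy, purely combinatorial half of the story.

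The only unconditional estimate you actually derive is that each ratio $a_I/\prod_{i\in I}a_i$ equals $1+O(\a)$, coming from Stanley's specialization \cref{eq:JackA0}. Feeding that weak bound into your own machinery gives only $c_J=O(\a)$ for $|J|\ge 2$, hence only $\kumu^J(\la^1,\dots,\la^r)=O(\a)$ (a connected spanning hypergraph may consist of the single hyperedge $[r]$), which is far from $O(\a^{r-1})$. The paper itself flags this point: already for $r=3$, the $r=2$ factorization yields $1+O(\a)$, whereas the claim $1+O(\a^2)$ ``is not trivial at all''. All of the actual content of the paper's proof is absent from your proposal: the double induction on $r$ and on the power $j$ of $\a$, the use of the Laplace--Beltrami eigenvalue equation from \cref{PropDefJack}, the two structural lemmas \cref{lem:A1SimpleForm} and \cref{lem:A2SimpleForm} (which control the error terms $A_1$ and $A_2$ with the help of \cref{prop:CumulantOfPartitionsVanishes}), and the vanishing criterion \cref{cor:EigenvectorsVanish}. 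To make your route work you would first need an independent proof of \cref{ThmSFJack}, which is precisely the task at hand.
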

\cref{theo:CumulantsJack} is in fact equivalent to
\cref{ThmSFJack}, as shown (in a more general setting)
by \cref{PropEquivalenceSCQF}
(we need here the fact that $J_\la$ has a non-zero limit
when $\a$ tends to $0$ \cite[Proposition 7.6]{Stanley1989};
this ensures that $J_\la=O(1)$ and $J_\la^{-1}=O(1)$).
We prove \cref{theo:CumulantsJack} in \cref{sec:proof}.
\bigskip

We noticed, using computer simulations, 
that a similar property seems to hold for Macdonald polynomials $J^{(q,t)}_\lambda$.
Unfortunately, 
we were unable to prove it and we state it here as a conjecture.
Similarly to the Jack case, we define 
\[\kumu^M(\la^1,\dots,\la^r)=
   \sum_{\substack{\pi \in \PPP([r]) } } 
   \mu(\pi, \{H\}) \prod_{B \in \pi} J^{(q,t)}_{\la^B}.\]

\begin{conjecture}
\label{conj:Macdonald}
For any partitions $\la^1, \dots, \la^r$, one has:
\begin{itemize}
    \item the strong factorization property of Macdonald polynomials
      when $q$ goes to $1$, {\it i.e.}
    \begin{equation}
        \prod_{I \subset [r]} \big(J_{\la^I}^{(q,t)}\big)^{(-1)^{|I|}}
        =1+O\bigg ( (q-1)^{r-1}\bigg),
        \label{EqSFMacdo}
    \end{equation}
when $q \to 1$;
\item the following estimates on cumulants of Macdonald polynomials
    \begin{equation}
        \kumu^M(\la^1,\dots,\la^r)=O\bigg( (q-1)^{r-1} \bigg),
        \label{EqSCMacdo}
    \end{equation}
when $q \to 1$.
\end{itemize}
\end{conjecture}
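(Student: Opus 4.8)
The plan is to reduce, exactly as in the Jack case, the factorization statement \eqref{EqSFMacdo} to the cumulant estimate \eqref{EqSCMacdo} via the general equivalence of \cref{PropEquivalenceSCQF}, and then to work only with the cumulant form. For this I must first check the hypotheses of that equivalence in the Macdonald setting, namely that $J^{(q,t)}_\la$ and its reciprocal stay bounded as $q \to 1$. I would do this by identifying the limit $\lim_{q \to 1} J^{(q,t)}_\la$ explicitly: a short computation (for instance $P_{(n)}(x;1,t) = p_1^n$) suggests that this limit is a product over the \emph{columns} of $\la$, a $t$-deformation of Stanley's formula $J^{(0)}_\la = e_{\la'}$ \cite{Stanley1989}. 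Since $(\la^1 \oplus \la^2)' = (\la^1)' \sqcup (\la^2)'$, any such column-indexed product factorizes automatically under $\oplus$; this yields at once the base case $r=2$ of \eqref{EqSFMacdo}, the required $O(1)$ bounds, and — feeding the base case back into the full product — the crude estimate $1 + O(q-1)$ for every $r$.

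The substance of the conjecture is to sharpen this crude estimate to the exact order $(q-1)^{r-1}$, and here I would mimic the strategy behind \cref{theo:CumulantsJack}, replacing the Laplace--Beltrami operator by the commuting family of Macdonald $q$-difference operators, of which $J^{(q,t)}_\la$ is a joint eigenfunction with explicit eigenvalues. Expanding these operators and their eigenvalues in powers of $(q-1)$ should recover the Laplace--Beltrami operator at leading order, so that the Jack argument reappears as the $(q-1)^0$ slice. Applying such an operator to the defining alternating sum $\kumu^M(\la^1,\dots,\la^r)$ and using the eigenvalue relation would produce a recursion expressing it through cumulants of lower order, which by induction are already $O\bigl((q-1)^{r-2}\bigr)$; the aim is to show that each application of the operator contributes one further power of $(q-1)$, closing the induction at the exponent $r-1$.

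The hard part will be this second step: controlling the $(q-1)$-expansion of the action of the $q$-difference operators on the products $\prod_{B} J^{(q,t)}_{\la^B}$. Whereas the Laplace--Beltrami operator is a second-order differential operator, polynomial in $1/\a$, whose commutator with multiplication has a transparent low-order structure, the Macdonald operators degenerate as $q \to 1$ into an infinite hierarchy of differential operators of unbounded order; ensuring that none of the higher corrections lowers the vanishing order below $(q-1)^{r-1}$ seems to require an exact, $q$-deformed combinatorial identity among the Pieri coefficients of Macdonald polynomials, extending the identity that drives the Jack proof. Isolating the correct leading order is easy — it merely reproduces \cref{ThmSFJack} — but proving that the deformation preserves the \emph{full} vanishing order is, I expect, the genuine obstacle, and presumably the reason the authors could confirm \cref{conj:Macdonald} only numerically.
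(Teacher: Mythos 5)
There is a genuine gap, and you have in fact named it yourself. The statement you were asked to prove is \cref{conj:Macdonald}, which the paper does \emph{not} prove: it is stated as a conjecture supported only by computer experiments (a proof was found only later, in a separate paper of the first author, as the ``Note added in revision'' records). The solid parts of your proposal are exactly the reductions the paper itself already makes: the equivalence of \cref{EqSFMacdo} and \cref{EqSCMacdo} via \cref{PropEquivalenceSCQF} (once one knows that $J^{(q,t)}_\lambda$ and its reciprocal are $O(1)$ as $q\to 1$), and the case $r=2$, which follows from Macdonald's explicit column-factorized formula for $J^{(1,t)}_\lambda$; feeding $r=2$ back into the product indeed gives the crude bound $1+O(q-1)$. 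Everything beyond that in your text is a program rather than an argument: the induction on $r$ via $q$-difference operators is precisely the step you concede is ``the genuine obstacle,'' so no case of the conjecture beyond what the paper already notes is established.

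Moreover, the program as sketched has a technical flaw that you should be aware of before pursuing it. You assert that expanding the Macdonald operators in powers of $(q-1)$ recovers the Laplace--Beltrami operator at leading order, so that ``the Jack argument reappears as the $(q-1)^0$ slice.'' This conflates two different limits: the Jack degeneration used in the paper requires $q=t^\a$ with $t\to 1$ \emph{simultaneously}, whereas \cref{conj:Macdonald} concerns $q\to 1$ with $t$ fixed. In the latter regime the Macdonald operator
\[ D^1_N=\sum_{i\le N}\Bigl(\prod_{j\ne i}\frac{tx_i-x_j}{x_i-x_j}\Bigr)T_{q,x_i},
\qquad (T_{q,x_i}f)(x_1,\dots,x_N)=f(x_1,\dots,qx_i,\dots,x_N), \]
degenerates to multiplication by the constant $\frac{1-t^N}{1-t}$, since each shift $T_{q,x_i}$ tends to the identity and $\sum_{i}\prod_{j\ne i}\frac{tx_i-x_j}{x_i-x_j}=\frac{1-t^N}{1-t}$ (consistently, the eigenvalue $\sum_i q^{\lambda_i}t^{N-i}$ also tends to this constant). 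Hence the $(q-1)^0$ slice of the eigenvalue relation is vacuous, and the first nontrivial information appears at order $(q-1)$, where one obtains a first-order operator with $t$-dependent rational coefficients, not $D_1$. To transfer the paper's Jack proof one would need, for this $t$-deformed operator, analogues of \cref{prop:Laplace} and \cref{cor:EigenvectorsVanish} (eigenvalue separation and triangularity), none of which you establish. So both the base structure and the inductive engine of the Jack argument fail to carry over as stated, which is consistent with the authors' own inability to prove the conjecture in this paper.
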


As in the Jack case, the above items are equivalent by \cref{PropEquivalenceSCQF}.
Note that the case $r=2$ of both items says that 
 \[ J^{(1,t)}_{\la^1 \oplus \la^2}=J^{(1,t)}_{\la^1}J^{(1,t)}_{\la^2},\]
 which follows from the explicit expression for $J^{(1,t)}_{\la}$
 given in \cite[Chapter VI, Remark (8.4), item (iii)]{Macdonald1995}.
Moreover, we mention that \cref{conj:Macdonald} implies \cref{theo:CumulantsJack}
as a special case by substitution $q = t^\a$ 
and taking a limit $t \to 1$ 
since one has (see \cite[Chapter VI, Eq. (10.23)]{Macdonald1995}):
\[ \lim_{t \to 1}\ (1-t)^{-|\la|} J_{\la}^{(t^\a,t)}(\bm{x}) =
J^{(\a)}_{\la}(\bm{x}).\]
{\em Note added in revision:}
After submission of the current paper,
the first author has found a
proof of \cref{conj:Macdonald};
see \cite{Dolega2016a}.

\subsection{Related problems}
We finish this section mentioning two similar problems.
First, a very similar conjecture to \cref{conj:BConj} (without logarithm in \Cref{EqDefH})
was also stated by Goulden and Jackson \cite{GouldenJackson1996}.
The series obtained in this way is conjecturally a multivariate generating function of \emph{matchings},
where the exponent of $\beta$ is some combinatorial integer-valued statistics.
The conjecture is still open,
while some special cases have been solved by Goulden and Jackson in their original article
\cite{GouldenJackson1996} and recently by Kanunnikov and Vassilieva \cite{KanunnikovVassilieva2014}.
The polynomiality result for the coefficients of this series was
proven by the authors of this paper in \cite{DolegaFeray2014}
and is significant in the current work.
Indeed, together with a simple argument
given in \cref{SubsectNoPolesOutsideZero}, it reduces the proof of \cref{theo:Polynomiality}
to checking that there is no singularity in $\alpha=0$.
\medskip

The second related problem is the investigation of \emph{Jack characters}, that is suitably normalized coefficients of the power-sum expansion of Jack polynomials.
In a series of papers \cite{Lassalle2008a, Lassalle2009} Lassalle stated
some polynomiality and positivity conjectures
suggesting that a combinatorial description of these objects might exist.
Although these conjectures are not fully resolved, it was proven by us
together with Śniady \cite{DolegaFeraySniady2014} and by Śniady \cite{Sniady2015} that in some special cases indeed, such combinatorial setup exists.
Moreover, similarly to \cref{conj:BConj}, these special cases
involve hypermaps and some statistics that ``measures their non-orientability''.
\medskip

We cannot resist to state that there must be a deep connection between all these problems, and understanding it would be of great interest.

\subsection{Organization of the paper}
We describe all necessary definitions and background in \cref{sec:preliminaries},
and in \cref{sec:cumulants} we discuss cumulants and their relation with strong factorization.
\cref{sec:proof} is devoted to the proof of the strong factorization property of Jack polynomials, 
while \cref{sec:polynomiality} presents the proof of the main result, that is the polynomiality in $b$-conjecture.

\section{Preliminaries}
\label{sec:preliminaries}

\subsection{Partitions}
\label{SubsecPartitions}
We call $\lambda := (\lambda_1, \lambda_2, \dots, \lambda_l)$ \emph{a partition} of $n$
if it is a weakly decreasing sequence of positive
integers such that $\la_1+\la_2+\cdots+\la_l = n$.
Then $n$ is called {\em the size} of $\lambda$ while $l$ is {\em its length}.
As usual, we use the notation $\la \vdash n$, or $|\la| = n$, and $\ell(\la) = l$.
We denote the set of partitions of $n$ by $\Y_n$ and we define a partial order on $\Y_n$,
called {\em dominance order}, in the following way:
\[ \lambda \leq \mu \iff \sum_{i\leq j}\lambda_i \leq \sum_{i\leq j}\mu_i \text{ for any positive integer } j.\]

For any two partitions $\la \in \Y_n$ and $\mu \in \Y_m$ we can
construct two new partitions $\la \oplus \mu, \la \cup \mu \in
\Y_{n+m}$, where $\la \oplus \mu := (\la_1+\mu_1,\la_2+\mu_2,\dots)$
and $\la \cup \mu$
is obtained by merging parts of $\la$ and $\mu$ and ordering them in a decreasing fashion.
Moreover, there exists a canonical involution on the set $\Y_n$,
which associates with a partition $\la$ its \emph{conjugate partition} $\la^t$.
By definition, the $j$-th part $\la_j^t$ of the conjugate partition
is the number of positive integers $i$ such that $\la_i \ge j$.
Notice that for any two partitions $\la, \mu$, we have 
$(\la \cup \mu)^t=\la^t \oplus \mu^t$.
A partition $\la$ is identified with some geometric object, called
\emph{Young diagram} drawn in French convention,
that can be defined as follows:
\[ \la = \{(i, j):1 \leq i \leq \la_j, 1 \leq j \leq \ell(\la) \}.\]
 For any
 box $\square := (i,j) \in \la$ from Young diagram we define its \emph{arm-length} by $a(\square) := \la_j-i$ and its \emph{leg-length} by $\ell(\square) := \la_i^t-j$ (the same definitions as in \cite[Chapter I]{Macdonald1995}), see \cref{fig:ArmLeg}.

    \begin{figure}[t]
        \[
    \begin{array}{c}
    \Yfrench
    \Yboxdim{1cm}
    \begin{tikzpicture}[scale=0.75]
        \newcommand\yg{\Yfillcolour{gray!40}}
        \newcommand\yw{\Yfillcolour{white}}
        \newcommand\tralala{(i,j)}
        \newcommand\arml{a(\square)}
        \newcommand\legl{\ell(\square)}
        \tgyoung(0cm,0cm,;;;;;;;;;;,;;_6,;|4;;;;,;:;;;,;:;;,;:;)
       \Yfillopacity{1}
       \Yfillcolour{gray!40}
       \tgyoung(2cm,1cm,_6\arml)
       \tgyoung(1cm,2cm,|4\legl)
       \Yfillopacity{0}
       \Ylinethick{2pt}
       \tgyoung(1cm,1cm,\square)
       \draw[->](1.5,3.5)--(1.5,2.3);
       \draw[->](1.5,4.5)--(1.5,5.7);
       \draw[->](5.7,1.5)--(7.7,1.5);
       \draw[->](4.3,1.5)--(2.3,1.5);
       \end{tikzpicture}
    \end{array}
        \]
     \caption{Arm and leg length of boxes in Young diagrams. Above
       Young diagram corresponds to partition $\la = (10,8,6,5,4,3)$.}
     \label{fig:ArmLeg}
    \end{figure}
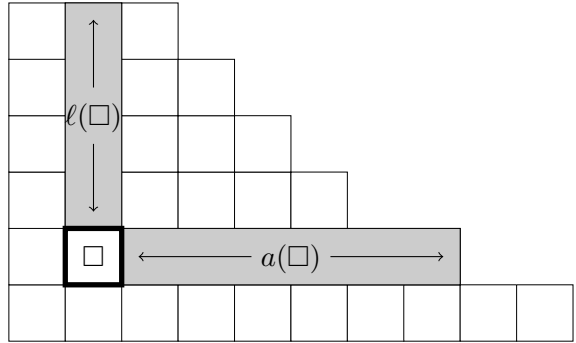

There are many combinatorial quantities associated with partitions that we will use extensively through this paper, so let us define them.
First, set
\begin{equation}
\label{eq:NumericalFactor}
z_\la := \prod_{i \geq 1}i^{m_i(\la)}m_i(\la)!,
\end{equation}
where $m_i(\la)$ denotes the number of parts of $\la$ equal to $i$.
We also define \emph{$\a$-hook polynomials} $\hook_\a(\la)$ 
and $\hook'_\a(\la)$ by the following equations:
\begin{align}
\label{eq:HookProduct}
\hook_\a(\la) &:= \prod_{\square \in \la}\left(\a\ a(\square) + \ell(\square) +1 \right),\\
\label{eq:HookProduct2}
\hook'_\a(\la) &:= \prod_{\square \in \la}\left(\a\ a(\square) + \ell(\square) +\a \right).
\end{align}
Finally, we consider a partition binomial given by
\begin{equation}
\label{eq:BinomialEigenvalue}
b(\lambda) := \sum_i \binom{\lambda_i}{2}.
\end{equation}

\subsection{Jack polynomials and Laplace-Beltrami operator}
\label{subsec:laplace}
Jack polynomials are a classical one-parameter deformation
of Schur symmetric functions, and can be defined in several different ways.
We will use a characterization via {\em Laplace--Beltrami operators}
suggested by Stanley in his seminal paper \cite[note p.~85]{Stanley1989}.
Since this is now a well-established theory,
results of this section are given without proofs but with explicit references to the literature
(mostly to Stanley's paper \cite{Stanley1989}).

First, consider the vector space $\Symm^N$ of symmetric polynomials in
$N$ variables over the field of rational functions $\QQ(\alpha)$.
The following differential operators act on this space:
\begin{equation*}
   D_1 = \sum_{i \leq N}\sum_{i \neq j}\frac{x_i^2}{x_i-x_j}\frac{\partial}{\partial x_i}, \qquad D_2 = \frac{1}{2}\sum_{i \leq N}x_i^2\frac{\partial^2}{\partial x_i^2}.
\end{equation*}
Then the \emph{Laplace--Beltrami operator} $D_\a$ 
is defined as $D_\a = D_1 + \a D_2$.
\begin{proposition}
    \label{PropDefJack}
    There exists a unique family $J_\la^{(\a)}$ (indexed by partitions $\la$ of length at most $N$)
    in $\Symm^N$ that satisfies:
    \begin{enumerate}[noitemsep,topsep=0pt,parsep=0pt,partopsep=0pt]
        \item[(C1)]
            $J^{(\a)}_\la(x_1,\dots,x_N)$ is an eigenvector of $D_\a$
with eigenvalue 
\[\Ev(\la) := \a b(\la) - b(\la^t) + (N-1)|\la|;\]
\item[(C2)] the monomial expansion of $J_\la^{(\a)}$ is given by
  \begin{equation}
    J_\la = \hook_\a(\la) m_\la + \sum_{\nu < \la}a^{\la}_\nu m_\nu,  \text{ where } a^{\la}_\nu \in \QQ(\a). 
    \label{eq:JackInMonomial}
  \end{equation}
    (Recall that we use the dominance order on partitions.)
    \end{enumerate}
    These polynomials are called {\em Jack polynomials}.
\end{proposition}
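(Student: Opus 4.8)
The plan is to prove existence and uniqueness simultaneously via a triangularity argument carried out on each homogeneous component. First I would note that $D_1$ and $D_2$ are both homogeneous of degree $0$ as differential operators, so $D_\a$ preserves the finite-dimensional space $\Symm^N_n\subset\Symm^N$ of symmetric polynomials of degree $n$, whose monomial functions $\{m_\la : |\la|=n,\ \ell(\la)\le N\}$ form a basis. Fixing $n$, I analyze the matrix of $D_\a$ in this basis, listed according to a linear extension of the dominance order. The entire statement will follow once I show that this matrix is triangular with diagonal entries $\Ev(\la)$ and that the diagonal entries attached to \emph{comparable} partitions are distinct as elements of $\QQ(\a)$.

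The triangularity and the diagonal split along $D_\a=D_1+\a D_2$ (writing $\partial_i:=\partial/\partial x_i$). For $D_2$ the computation is immediate: on any monomial $x^\beta=\prod_i x_i^{\beta_i}$ one has $x_i^2\,\partial_i^2\, x^\beta=\beta_i(\beta_i-1)\,x^\beta$, whence $D_2\, x^\beta=\big(\sum_i\binom{\beta_i}{2}\big)x^\beta=b(\beta)\,x^\beta$; summing over the monomials of $m_\la$ shows that $D_2$ is already diagonal, $D_2\,m_\la=b(\la)\,m_\la$. The substantial part is $D_1$. Pairing the ordered pairs $(i,j)$ and $(j,i)$ rewrites it as $D_1=\sum_{i<j}\frac{x_i^2\,\partial_i-x_j^2\,\partial_j}{x_i-x_j}$, which is manifestly a polynomial (not merely rational) operator preserving $\Symm^N$; I would then expand its action on the leading monomial $x^\la$ and, by bookkeeping the exponent vectors it can produce, check that every monomial occurring is indexed by some $\nu\le\la$ (those with $\ell(\nu)>N$ giving $m_\nu=0$ in $N$ variables). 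Extracting the coefficient of $x^\la$ itself yields the diagonal entry $-b(\la^t)+(N-1)n$ of $D_1$. Combining the two contributions gives $D_\a\,m_\la=\Ev(\la)\,m_\la+\sum_{\nu<\la}d^\la_\nu\,m_\nu$ with $\Ev(\la)=\a\,b(\la)-b(\la^t)+(N-1)|\la|$ and structure constants $d^\la_\nu\in\QQ$ (the off-diagonal part comes entirely from the $\a$-free operator $D_1$).

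For the distinctness I only need eigenvalue differences among comparable partitions, and here the key fact is that $\la\mapsto b(\la)=\tfrac12\big(\sum_i\la_i^2-n\big)$ is strictly increasing along the dominance order: if $\nu<\la$ then $\la$ majorizes $\nu$, and strict convexity of $t\mapsto t^2$ (Karamata's inequality) forces $b(\nu)<b(\la)$. Hence the coefficients of $\a$ in $\Ev(\nu)$ and $\Ev(\la)$ differ, so $\Ev(\nu)\ne\Ev(\la)$ in $\QQ(\a)$ whenever $\nu<\la$. With triangularity and this distinctness I construct the eigenvector by downward recursion: seeking $v_\la=m_\la+\sum_{\nu<\la}c_\nu\,m_\nu$ with $D_\a v_\la=\Ev(\la)v_\la$, matching the coefficient of each $m_\rho$ with $\rho<\la$ gives $(\Ev(\la)-\Ev(\rho))\,c_\rho=\sum_{\rho<\nu\le\la}c_\nu\,d^\nu_\rho$, a triangular system with nonzero diagonal that determines every $c_\rho\in\QQ(\a)$ uniquely. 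This proves existence and uniqueness of a monic eigenvector; setting $J_\la:=\hook_\a(\la)\,v_\la$ then produces the unique solution of (C1)--(C2), since $\hook_\a(\la)$ is the prescribed leading coefficient in (C2).

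The main obstacle is the $D_1$ computation of the second paragraph: verifying that $D_1$ is triangular for the dominance order and that its diagonal is exactly $-b(\la^t)+(N-1)|\la|$ requires a careful analysis of the polynomial operator $\frac{x_i^2\,\partial_i-x_j^2\,\partial_j}{x_i-x_j}$ and of which exponent vectors it can produce; this is the only genuinely computational step and is where I expect to spend the most effort. A secondary subtlety worth flagging is that $\Ev$ need \emph{not} be injective on all of $\Y_n$ --- for instance $(3,3)$ and $(4,1,1)$ share the same $b$ and the same $b(\cdot^t)$ --- but such coincidences occur only between \emph{incomparable} partitions, and therefore never interfere with the triangular recursion, whose equations involve solely the differences $\Ev(\la)-\Ev(\rho)$ for $\rho<\la$.
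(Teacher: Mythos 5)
Your argument is sound, and it essentially reconstructs from scratch the material that the paper simply cites: the paper gives no self-contained proof of this proposition, but invokes Stanley's results \cite[Theorems 3.1 and 5.6]{Stanley1989} for existence and disposes of uniqueness as ``an easy linear algebra exercise'' once one knows that $\Ev(\la)=\Ev(\mu)$ and $|\la|=|\mu|$ force $\la$ and $\mu$ to be equal or incomparable \cite[Lemma 3.2]{Stanley1989}. Your downward triangular recursion is exactly that linear algebra exercise, and your Schur-convexity (Karamata) argument that $\nu<\la$ implies $b(\nu)<b(\la)$ is a clean, direct proof of the restricted form of Stanley's Lemma 3.2 that the recursion actually needs; your example of $(3,3)$ versus $(4,1,1)$ correctly illustrates why coincidences of $\Ev$ among incomparable partitions are harmless. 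The trade-off is that the step you flag as the main obstacle --- dominance-triangularity of $D_1$ on the monomial basis with diagonal entry $(N-1)|\la|-b(\la^t)$ --- is precisely the content of Stanley's Theorem 3.1, i.e.\ the part the paper outsources, and your proposal only sketches how to attack it. Note that you could bypass that computation entirely by using the first part of \cref{prop:Laplace}, which the paper does state: since $D_1 e_{\la^t} = \big((N-1)|\la|-b(\la^t)\big)e_{\la^t}$ and the transition $e_{\la^t}=m_\la+\sum_{\mu<\la}b^{\la}_\mu m_\mu$ is unitriangular for the dominance order, the operator $D_1$ is automatically triangular in the monomial basis with exactly the diagonal entries you need; combined with your (correct) observation that $D_2 m_\la = b(\la)\, m_\la$, this yields the triangular form of $D_\a$ with diagonal $\Ev(\la)$ without any bookkeeping of exponent vectors, after which your recursion and normalization by $\hook_\a(\la)$ finish the proof as you describe.
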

This is not the definition of Jack polynomials used by Stanley,
but the fact that Jack polynomials indeed satisfy these properties can
be found in his work \cite[Theorem 3.1 and Theorem 5.6]{Stanley1989}.
The uniqueness is an easy linear algebra exercise when one has observed that $\Ev(\la)=\Ev(\mu)$ and $|\la|=|\mu|$
imply that $\la$ and $\mu$ are either equal or incomparable for the
dominance order; see \cite[Lemma 3.2]{Stanley1989}.
A deep result of Knop and Sahi \cite{KnopSahi1997} asserts that $a^{\la}_\nu$ lies in fact in $\N[\a]$.
In particular, Jack polynomials depend polynomially on $\a$.

With the above definition, the Jack polynomial $J_\la^{(\a)}$ depends
on $N$, the number of variables.
However, it is easy to see that it satisfies the compatibility relation
$J_\la^{(\a)}(x_1,\dots,x_N,0)=J_\la^{(\a)}(x_1,\dots,x_N)$ and thus $J_\la^{(\a)}$ can be seen as a symmetric function.
In the sequel, when working with differential operators,
we sometimes confuse a symmetric function $f$
with its restriction $f(x_1,\dots,x_N,0,0,\dots)$ to $N$ variables.

Stanley also established the following specialization formula at $\a=0$:
\begin{equation}
  J^{(0)}_\lambda = \left(\prod_i \la^t_i!\right) e_{\la^t},
  \label{eq:JackA0}
\end{equation}
where $e_\la$ is the \emph{elementary symmetric function}
associated with $\la$ \cite[Proposition 7.6]{Stanley1989}.
A key point in his proof, that will be also important in the present paper,
is the following proposition.
\begin{proposition}
For any partition $\rho \vdash n$,
\label{prop:Laplace}
\begin{enumerate}[noitemsep,topsep=0pt,parsep=0pt,partopsep=0pt]
\item 
\label{prop:Laplace2}
the elementary symmetric function $e_\rho$ is an eigenvector of the operator $D_1$:
\[ D_1 e_\rho = \big((N-1)|\rho| - b(\rho)\big)e_\rho;\]
\item 
\label{prop:Laplace3}
for any partition $\mu \vdash n$ such that $b(\rho) = b(\mu)$ either $\rho = \mu$ or $\rho \nleq \mu$.
\end{enumerate}
\end{proposition}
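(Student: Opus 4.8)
The plan is to treat the two parts separately. For part (1), the key observation is that $D_1$ is a first-order differential operator, hence a derivation for the product of symmetric functions. Writing $e_\rho = \prod_i e_{\rho_i}$ and applying the Leibniz rule reduces the statement to the one-row case: it suffices to show that each elementary symmetric polynomial $e_k$ is an eigenvector of $D_1$ with eigenvalue $(N-1)k - \binom{k}{2}$. Indeed, Leibniz then gives $D_1 e_\rho = \big(\sum_i ((N-1)\rho_i - \binom{\rho_i}{2})\big) e_\rho$, and summing over the parts of $\rho$ together with $\sum_i \binom{\rho_i}{2} = b(\rho)$ yields exactly the eigenvalue $(N-1)|\rho| - b(\rho)$.

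To compute $D_1 e_k$, I would use the decomposition $\frac{x_i^2}{x_i - x_j} = x_i + \frac{x_i x_j}{x_i - x_j}$, which splits $D_1 = (N-1)E + D_1'$, where $E = \sum_i x_i \partial_{x_i}$ is the degree operator and $D_1' = \sum_i \sum_{j \neq i} \frac{x_i x_j}{x_i - x_j}\, \partial_{x_i}$. Since $e_k$ is homogeneous of degree $k$, the first summand contributes $(N-1)k\, e_k$. For $D_1'$, I would symmetrize over each unordered pair $\{i,j\}$ and use $\partial_{x_i} e_k = e_{k-1}^{(i)}$, the elementary symmetric polynomial of degree $k-1$ in the variables other than $x_i$; the telescoping identity $e_{k-1}^{(i)} - e_{k-1}^{(j)} = (x_j - x_i)\, e_{k-2}^{(i,j)}$ cancels the pole at $x_i = x_j$ and leaves $D_1' e_k = -\sum_{\{i,j\}} x_i x_j\, e_{k-2}^{(i,j)}$. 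A squarefree monomial of degree $k$ then occurs in this sum once for each of the $\binom{k}{2}$ ways of choosing the distinguished pair among its $k$ variables, so the sum equals $\binom{k}{2}\, e_k$, giving $D_1' e_k = -\binom{k}{2} e_k$ and hence the claimed eigenvalue. The only mildly delicate point here is spotting that telescoping identity.

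For part (2) I would prove the contrapositive: if $\rho \le \mu$ and $b(\rho) = b(\mu)$, then $\rho = \mu$. Setting $c_i = \mu_i - \rho_i$, an elementary computation gives $b(\mu) - b(\rho) = \tfrac12 \sum_i c_i (\mu_i + \rho_i)$, where the linear term drops out because $\sum_i c_i = 0$. The sequence $d_i := \mu_i + \rho_i$ is weakly decreasing, the partial sums $C_j := \sum_{i \le j} c_i$ are nonnegative by the definition of dominance, and $C_j = 0$ for $j$ large; Abel summation then rewrites the difference as $\tfrac12 \sum_j C_j (d_j - d_{j+1}) \ge 0$, which is precisely the majorization inequality for the strictly convex function $x \mapsto \binom{x}{2}$. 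The main obstacle is the equality case, since the Abel form only forces each product $C_j(d_j - d_{j+1})$ to vanish. To conclude $\rho = \mu$ I would invoke the strict version of Karamata's inequality, or argue directly through the covering relations of the dominance order: raising a single box from row $j$ to a higher row $i < j$ changes $b$ by $\binom{\rho_i+1}{2} - \binom{\rho_i}{2} + \binom{\rho_j-1}{2} - \binom{\rho_j}{2} = \rho_i - \rho_j + 1 \ge 1$ because $\rho_i \ge \rho_j$, so every strict step up in dominance strictly increases $b$, and $b(\rho) = b(\mu)$ forbids any such step.
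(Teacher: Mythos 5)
Your proof is correct, but there is no in-paper proof to compare it with: the paper states \cref{prop:Laplace} without proof, as part of preliminaries that are ``given without proofs but with explicit references to the literature'', deferring to Stanley \cite{Stanley1989}, where these two facts underpin the proof of the $\alpha=0$ specialization \cref{eq:JackA0}. Judged on its own, your argument is sound and makes the statement self-contained. In part (1), the reduction to one-row partitions is legitimate since $D_1$ is a vector field and hence a derivation; the splitting $D_1=(N-1)E+D_1'$ (with $E$ the Euler operator) is exact, the telescoping identity $e_{k-1}^{(i)}-e_{k-1}^{(j)}=(x_j-x_i)\,e_{k-2}^{(i,j)}$ does cancel the apparent poles, and the count $\sum_{\{i,j\}}x_ix_j\,e_{k-2}^{(i,j)}=\binom{k}{2}e_k$ yields $D_1e_k=\big((N-1)k-\binom{k}{2}\big)e_k$, which sums over the parts of $\rho$ to the stated eigenvalue. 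In part (2), the Abel-summation step correctly proves the inequality $b(\rho)\le b(\mu)$ whenever $\rho\le\mu$, and you rightly flag the equality case as the crux; your single-box computation (each raise of a box from row $j$ to row $i<j$ changes $b$ by $\rho_i-\rho_j+1\ge 1$) does settle it, but it silently relies on the nontrivial classical fact that any strict dominance relation $\rho<\mu$ between partitions of $n$ is realized by a chain of single-box raising moves that stays inside the set of partitions --- this is Brylawski's description of the covering relations of the dominance lattice. Alternatively, the strict form of Karamata's inequality for the strictly convex function $x\mapsto\binom{x}{2}$ can be invoked. Either way, that reference should be made explicit, since it carries the entire equality case; with it, your proof is complete.
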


Here is an easy corollary, that will be useful for us.
\begin{corollary}
\label{cor:EigenvectorsVanish}
Fix a partition $la$ and
let $f \in \Symm$ be a homogeneous symmetric function with an expansion in the monomial basis of the following form:
\[ f = \sum_{\mu < \lambda} d_\mu m_\mu,\]
for some $d_\mu \in \QQ$.
If, for any number $N$ of variables,
\[ D_1 f = \big((N-1)|\la| - b(\la^t)\big) f\] 
then $f = 0$.
\end{corollary}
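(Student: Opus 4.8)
The plan is to expand $f$ in the basis of elementary symmetric functions and to use the two items of \cref{prop:Laplace} to control which terms survive. Set $n:=|\lambda|$; since every $\mu$ with $\mu<\lambda$ has $|\mu|=n$, the function $f$ is homogeneous of degree $n$, and I may fix $N\ge n$ variables, for which $(e_\rho)_{\rho\vdash n}$ is a basis of the degree-$n$ component of $\Symm^N$. Write $f=\sum_{\rho\vdash n}c_\rho\,e_\rho$ with $c_\rho\in\QQ$.

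First I would determine the support of this expansion. By the first item of \cref{prop:Laplace}, each $e_\rho$ is an eigenvector of $D_1$ with eigenvalue $(N-1)n-b(\rho)$, so $D_1 f=\sum_\rho c_\rho\big((N-1)n-b(\rho)\big)e_\rho$. Comparing with the hypothesis $D_1 f=\big((N-1)n-b(\lambda^t)\big)f$ and using linear independence of the $e_\rho$ gives $c_\rho\big(b(\rho)-b(\lambda^t)\big)=0$ for every $\rho$. Hence $c_\rho=0$ unless $\rho$ lies in $A:=\{\rho\vdash n:\ b(\rho)=b(\lambda^t)\}$, a set which contains $\lambda^t$.

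The structural observation I would then exploit is that $A$ is an antichain for the dominance order: applying the second item of \cref{prop:Laplace} to two distinct partitions of $A$ (once in each order) yields $\rho\nleq\rho'$ and $\rho'\nleq\rho$, i.e.\ incomparability. Since conjugation reverses dominance, $A^t:=\{\rho^t:\rho\in A\}$ is again an antichain, and it contains $\lambda$. Now I would invoke the classical unitriangularity of the transition between the $e$- and $m$-bases, namely $e_\rho=m_{\rho^t}+\sum_{\mu<\rho^t}a_{\rho\mu}\,m_\mu$ (see \cite{Macdonald1995}, Chapter I): the dominance-maximal monomial of $e_\rho$ is $m_{\rho^t}$, with coefficient $1$. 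Because $A^t$ is an antichain, for distinct $\rho,\rho'\in A$ the monomial $m_{\rho^t}$ cannot occur in $e_{\rho'}$ (that would force $\rho^t\le\rho'^t$). Consequently the coefficient of $m_{\rho^t}$ in $f$ equals exactly $c_\rho$ for each $\rho\in A$.

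Finally I would combine this with the prescribed shape of $f$. If $c_\rho\neq0$ for some $\rho\in A$, then $m_{\rho^t}$ appears in $f$, so by hypothesis $\rho^t<\lambda$; conjugating gives $\rho>\lambda^t$, contradicting the fact that $\rho$ and $\lambda^t$ are distinct members of the antichain $A$. Hence all $c_\rho$ vanish and $f=0$. Conceptually, the only eigenfunction candidate with the correct eigenvalue is $e_{\lambda^t}$, whose leading monomial is precisely $m_\lambda$ and is therefore excluded by the restriction $\mu<\lambda$; the antichain property is what rules out any companion eigenfunction sneaking in below $\lambda$. I expect no serious obstacle beyond carefully tracking the reversal of inequalities under conjugation and citing the standard $e$-to-$m$ unitriangularity, which is not established in the excerpt but is classical.
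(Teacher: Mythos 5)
Your proof is correct and takes essentially the same route as the paper's: expand $f$ in the elementary basis, use the first part of \cref{prop:Laplace} to restrict the support to partitions with $b(\rho)=b(\la^t)$, use the second part to get pairwise incomparability, and then use the unitriangular $e$-to-$m$ transition to conclude from the hypothesis $f=\sum_{\mu<\la}d_\mu m_\mu$ that every coefficient vanishes. Your explicit antichain formulation merely spells out the coefficient-extraction step that the paper's proof leaves terse.
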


\begin{proof}
  From the first part of \cref{prop:Laplace}, the eigenspace of the operator $D_1$ corresponding to the eigenvalue
  $\big( (N-1)|\la| - b(\la^t)\big)$ is spanned by the functions $e_{\rho^t}$ with $b_{\rho^t}=b_{\la^t}$.
  Therefore, using now the second part of \cref{prop:Laplace},
  we know that the expansion of $f$ in the elementary basis must have the following form:
  \[ f = c_\la e_{\la^t} + \sum_{\la^t \nleq \rho^t} c_\rho e_{\rho^t}.\]
  But $\la^t \nleq \rho^t$ is equivalent to $\rho \nleq \la$.
  Moreover, it is easy to see that the expansion of the elementary symmetric function $e_{\la^t}$ 
  in the monomial basis
  involves only elements $m_\mu$ indexed by partitions $\mu \leq \la$:
  \[ e_{\la^t} = m_\la + \sum_{\mu < \la} b^{\la}_\mu m_\mu.\]
  Combining these two facts we know that the expansion of $f$ in the monomial basis has the following form:
  \[ f = c_\la (m_\la + \sum_{\mu < \la} b^{\la}_\mu m_\mu) + \sum_{\rho \nleq \la} c_\rho (m_\rho + \sum_{\mu < \rho} b^{\rho}_\mu m_\mu).\]
  But we assumed that
  \[ f = \sum_{\mu < \lambda} d_\mu m_\mu,\]
  which implies that $c_\la = 0$ and $c_\rho = 0$ for all $\rho \nleq \la$, thus $f = 0$ as claimed.
\end{proof}

\subsection{Goulden and Jackson's conjectures}
\label{SubsectNoPolesOutsideZero}
Following Goulden and Jackson \cite{GouldenJackson1996} we define
\begin{align}
  \label{eq:ConnectionJackSeries}
\phi(\bm{x}, \bm{y}, \bm{z}; t, \a) &:= \sum_{n \geq 0}t^n\sum_{\lambda \vdash n} \frac{J^{(\a)}_\lambda(\bm{x})J^{(\a)}_\lambda(\bm{y})J^{(\a)}_\lambda(\bm{z})}{\left\langle J^{(\a)}_\lambda, J^{(\a)}_\lambda\right\rangle_\a}.\\
\psi(\bm{x}, \bm{y}, \bm{z}; t, \a) &:= \a t\frac{\partial}{\partial_t} \log \phi(\bm{x}, \bm{y}, \bm{z}; t, \a).
\label{eq:HypermapJackSeries}
\end{align}
We then consider their power-sum expansion, {\em i.e.}
the two families of coefficients $h_{\mu,\nu}^{\tau}$ and $c_{\mu,\nu}^{\tau}$ defined by
\begin{align}
\psi(\bm{x}, \bm{y}, \bm{z}; t, \a) &= \sum_{n \geq 1} t^n \sum_{\mu, \nu,\tau \vdash n}h_{\mu, \nu}^\tau(\a-1)p_\tau(\bm{x}) p_\mu(\bm{y}) p_\nu(\bm{z});\\
\phi(\bm{x}, \bm{y}, \bm{z}; t, \a) &= \sum_{n \geq 1} t^n \sum_{\mu, \nu,\tau \vdash n}\frac{c_{\mu, \nu}^\tau(\a-1)}{\a^{\ell(\tau)} z_\tau} p_\tau(\bm{x}) p_\mu(\bm{y}) p_\nu(\bm{z}).
\end{align}
The definition of the coefficients $h_{\mu, \nu}^\tau(\a-1)$
was already given in \cref{subsec:Introduction-BConjecture},
we recall it here to emphasize the similarity with $c_{\mu, \nu}^\tau(\a-1)$.
Goulden and Jackson conjectured that all these coefficients are polynomials in $\beta=\a -1$
with non-negative integer coefficients and some combinatorial interpretations.
The first part of their conjecture, that is the statement that
$c_{\mu, \nu}^\tau(\beta)$ are polynomials in $\beta$ with rational coefficients was recently proven by the authors of this paper:

\begin{theorem}{\cite[Proposition B.2]{DolegaFeray2014}}
For any positive integer $n$ and for any partitions $\mu,\nu,\tau \vdash n$,
the quantity $c_{\mu,\nu}^{\tau}(\beta)$ is a polynomial in $\beta$ 
(or, equivalently, in $\a$) with rational coefficients.
\end{theorem}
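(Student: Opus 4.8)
The plan is to make the definition \cref{eq:ConnectionJackSeries} of $\phi$ completely explicit in the power-sum basis and then to control the denominators that appear. Write the power-sum expansion of the integral-form Jack polynomial as
\[ J_\la^{(\a)} = \sum_{\rho \vdash |\la|} \theta_\rho^\la(\a)\, p_\rho, \]
and recall the norm formula $\langle J_\la^{(\a)}, J_\la^{(\a)}\rangle_\a = \hook_\a(\la)\,\hook'_\a(\la)$ (Stanley). Extracting from \cref{eq:ConnectionJackSeries} the coefficient of $t^n\, p_\tau(\xx)\,p_\mu(\yy)\,p_\nu(\zz)$ and comparing with the defining expansion of $c_{\mu,\nu}^\tau$ gives
\[ c_{\mu,\nu}^\tau(\a-1) = \a^{\ell(\tau)}\, z_\tau \sum_{\la \vdash n} \frac{\theta_\tau^\la(\a)\,\theta_\mu^\la(\a)\,\theta_\nu^\la(\a)}{\hook_\a(\la)\,\hook'_\a(\la)}. \]
By the Knop--Sahi integrality of the monomial coefficients of $J_\la^{(\a)}$ (recalled after \cref{PropDefJack}), together with the fact that the change of basis expressing monomial symmetric functions in the power-sum basis has $\a$-independent rational coefficients, each $\theta_\rho^\la(\a)$ lies in $\QQ[\a]$. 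Hence the displayed expression is a well-defined element of $\QQ(\a)$, and the whole content of the statement is that the apparent denominator cancels.

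First I would localize the possible poles. A factor of $\hook_\a(\la)$ vanishes only at $\a = -(\ell(\square)+1)/a(\square)$ for a box with $a(\square)\ge 1$, hence at a negative rational; a factor of $\hook'_\a(\la)$ vanishes only at $\a = -\ell(\square)/(a(\square)+1)$, which equals $0$ precisely for the boxes with $\ell(\square)=0$ and is negative otherwise. Thus $c_{\mu,\nu}^\tau(\a-1)$ can a priori have poles only at $\a=0$ and at finitely many negative rationals, and for each candidate pole $\a_0$ only the finitely many partitions $\la\vdash n$ with $\hook_{\a_0}(\la)\,\hook'_{\a_0}(\la)=0$ contribute. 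The problem therefore reduces to showing, for each such $\a_0$, that the sum of the residues over these partitions vanishes.

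To prove this residue cancellation I would exploit the duality relating $J_\la^{(\a)}$ and $J_{\la^t}^{(1/\a)}$ together with the transposition of arm and leg lengths, which gives $\hook_{1/\a}(\la^t)=\a^{-|\la|}\hook'_\a(\la)$ and $\hook'_{1/\a}(\la^t)=\a^{-|\la|}\hook_\a(\la)$, so that the norm is invariant up to an explicit power of $\a$ and the summands of the $\la$-sum are permuted by $\la\mapsto\la^t$, $\a\mapsto 1/\a$. Combined with the corresponding transformation rule for the $\theta_\rho^\la(\a)$ under the involution $\omega_\a$, this organizes the candidate poles into orbits and reduces the required identity to a finite check; I expect to handle this check via a local analysis of Jack polynomials at the degenerate value $\a_0$, in the spirit of the specialization $J_\la^{(0)}$ of \cref{eq:JackA0} and the eigenvector argument of \cref{cor:EigenvectorsVanish}, which already illustrates how a hook degeneration is compensated by a collapse of the monomial support.

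The main obstacle is precisely this residue cancellation: individually the summands genuinely blow up at the degenerate values $\a_0$, and nothing short of a structural reason forces the residues to add to zero. An alternative route to the same obstacle uses the Laplace--Beltrami operator $D_\a=D_1+\a D_2$ of \cref{subsec:laplace}: since each $J_\la^{(\a)}$ is an eigenfunction with eigenvalue $\Ev(\la)$, applying $D_\a$ in one alphabet and subtracting its analogue in another eliminates the spectral factor and turns the identity $\left(D_\a^{(\xx)}-D_\a^{(\zz)}\right)\phi=0$ into a linear recursion for the $c_{\mu,\nu}^\tau$ whose coefficients are affine in $\a$. Solving such a recursion from a polynomial base case preserves polynomiality provided one never divides by a vanishing eigenvalue difference, but the values of $\a$ where these differences vanish are exactly the $\a_0$ above, so the reformulation confronts the same hard point rather than avoiding it.
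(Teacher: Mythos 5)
Your proposal is not a proof; it is a correct reduction followed by an unproven claim, and the gap sits exactly where all of the difficulty lives. Note first that the present paper does not prove this statement either: it is imported wholesale from \cite[Proposition B.2]{DolegaFeray2014}, so your argument has to stand on its own. The part that does stand is the setup: extracting coefficients from \cref{eq:ConnectionJackSeries} and using Stanley's norm formula indeed gives $c_{\mu,\nu}^\tau(\a-1)=\a^{\ell(\tau)}z_\tau\sum_{\la\vdash n}\theta^\la_\tau(\a)\,\theta^\la_\mu(\a)\,\theta^\la_\nu(\a)/\bigl(\hook_\a(\la)\hook'_\a(\la)\bigr)$, Knop--Sahi integrality gives $\theta^\la_\rho(\a)\in\QQ[\a]$, and the candidate poles are correctly confined to $\a_0=0$ and finitely many negative rationals. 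Since a rational function with no finite poles is a polynomial, the statement is thereby equivalent to the vanishing of all residues at these $\a_0$ --- which you acknowledge as ``the main obstacle'' but never establish.

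Neither of the mechanisms you point to can close this. The duality $\la\mapsto\la^t$, $\a\mapsto1/\a$ merely permutes summands and carries a pole at $\a_0$ to a pole at $1/\a_0$: it relates residues at \emph{different} points, it does not force any of them to vanish, and it says nothing at $\a_0=0$. The eigenvector argument of \cref{cor:EigenvectorsVanish} is specific to $\a=0$, where $D_1$ has the explicit eigenbasis $e_\rho$ of \cref{prop:Laplace}; at a negative rational $\a_0$ the vanishing of hook factors is precisely a symptom of eigenvalue coincidences of $D_{\a_0}$, and you offer no substitute for that case. Worse, even the $\a_0=0$ case is not the ``finite check'' you describe: by \cref{EqHPrimeHDPrime}, $\hook'_\a(\la)$ vanishes at $\a=0$ to order $\la_1$, while the compensating prefactor is only $\a^{\ell(\tau)}$, so individual summands have poles of large order and one needs cancellation across all $\la\vdash n$. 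Handling exactly this kind of cancellation at $\a=0$ (for the $h$-coefficients, \emph{taking the present statement as input} via \cref{CorNoPolesOutsideZero}) is what the entire cumulant and strong-factorization machinery of \cref{sec:cumulants,sec:proof} is built for; nothing of comparable strength appears in your plan. Your Laplace--Beltrami alternative is, as you yourself concede, the same wall viewed from the other side.
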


Recall from \cref{eq:HypermapJackSeries}, that
\[ \psi(\bm{x}, \bm{y}, \bm{z}; t, \a) /\a =  t\frac{\partial}{\partial_t} \log \phi(\bm{x}, \bm{y}, \bm{z}; t, \a).\]
Therefore, the coefficients of the power-sum expansion of the left-hand side
--- that correspond to $h_{\mu, \nu}^\tau(\beta)/\a$ --- can be expressed as polynomials 
in terms of the coefficients of the power-sum expansion of $\phi$
--- that correspond to $|\la| c_{\mu, \nu}^\tau(\beta)/(\a^{\ell(\tau)} z_\la)$.
In particular, an immediate corollary of the above theorem is the
following one:
\begin{corollary}
    For any positive integer $n$ and for any partitions $\mu,\nu,\tau \vdash n$,
    the coefficient $h_{\mu, \nu}^\tau(\beta)$ is a rational function in $\a$
    with only possible pole at $\alpha=0$.
    \label{CorNoPolesOutsideZero}
\end{corollary}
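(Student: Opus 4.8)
The plan is to propagate the polynomiality of the coefficients $c_{\mu,\nu}^\tau$ through the three operations relating $\phi$ and $\psi$: the logarithm, the operator $t\,\partial/\partial t$, and the multiplication by $\a$. The natural object to track is the ring
\[
R := \QQ[\a,\a^{-1}]
\]
of Laurent polynomials in $\a$, which is precisely the ring of rational functions in $\a$ whose only possible (finite) pole is at $\a=0$; the point is that $R$ is closed under addition and multiplication, so that any polynomial combination with rational coefficients of elements of $R$ again lies in $R$.

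First I would record that every coefficient of the power-sum expansion of $\phi$ lies in $R$. Indeed, by \cref{eq:ConnectionJackSeries} and the definition of the $c_{\mu,\nu}^\tau$, the coefficient of $t^n\, p_\tau(\bm{x})\, p_\mu(\bm{y})\, p_\nu(\bm{z})$ in $\phi$ equals $c_{\mu,\nu}^\tau(\a-1)/(\a^{\ell(\tau)} z_\tau)$. By the cited \cite[Proposition B.2]{DolegaFeray2014} the numerator is a polynomial in $\a$, while the denominator is a constant times a power of $\a$; hence this coefficient is an element of $R$.

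The key, and only slightly delicate, step is the logarithm. Since the $n=0$ term of $\phi$ equals $1$, one may write $\phi = 1+\widetilde{\phi}$ with $\widetilde{\phi}=O(t)$, and expand
\[
\log\phi = \sum_{k\ge 1}\frac{(-1)^{k-1}}{k}\,\widetilde{\phi}^{\,k}.
\]
Because $\widetilde{\phi}=O(t)$, the coefficient of $t^n$ in $\log\phi$ only receives contributions from the terms with $k\le n$, so it is a \emph{finite} sum. Moreover, products of power sums multiply as $p_\sigma\, p_{\sigma'} = p_{\sigma\cup\sigma'}$ in each of the three alphabets, so extracting the coefficient of a fixed triple $p_\tau(\bm{x})\, p_\mu(\bm{y})\, p_\nu(\bm{z})$ expresses it as a polynomial, with rational coefficients, in finitely many coefficients of $\phi$. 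By the ring property of $R$ established above, every coefficient of $\log\phi$ in the triple power-sum basis therefore lies in $R$.

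Finally I would apply the remaining two operations. Applying $t\,\partial/\partial t$ only multiplies the coefficient of $t^n$ by the integer $n$, which preserves membership in $R$; and passing from $\psi/\a = t\,\partial/\partial t\,\log\phi$ to $\psi$ multiplies by $\a\in R$. Combining these, the coefficient $h_{\mu,\nu}^\tau(\a-1)$ of $t^n\, p_\tau(\bm{x})\, p_\mu(\bm{y})\, p_\nu(\bm{z})$ in $\psi$ lies in $R$, i.e.\ it is a rational function in $\a$ with only possible pole at $\a=0$, as claimed. The one point requiring care is the bookkeeping in the logarithm step --- verifying that a single coefficient of $\log\phi$ is genuinely a finite rational-coefficient polynomial in the coefficients of $\phi$ --- but this follows from the triangularity of $\widetilde{\phi}$ in the variable $t$ and is the only place where any real argument is needed.
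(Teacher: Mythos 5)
Your proof is correct and takes essentially the same route as the paper: the paper likewise observes that the coefficients of $\psi/\a = t\,\frac{\partial}{\partial t} \log \phi$ are polynomial expressions with rational coefficients in the coefficients of $\phi$, which by \cite[Proposition B.2]{DolegaFeray2014} equal polynomials in $\a$ divided by $\a^{\ell(\tau)} z_\tau$, hence lie in $\QQ[\a,\a^{-1}]$. Your explicit handling of the logarithm expansion and the Laurent-polynomial ring structure simply fills in details that the paper leaves implicit.
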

Showing that there is in fact no pole at $\a=0$, as claimed in \cref{theo:Polynomiality},
requires a great deal of work and is the main contribution of this paper.
\subsection{Set partitions}
\label{SectPrelim}
\label{SubsecSetPartitions}
The combinatorics of set partitions is central in the theory of cumulants 
 and will be significant in this article.
 We recall here some well-known facts about them.

 Fix a ground set $S$.
A {\em set partition} of $S$ is a (non-ordered) family of non-empty disjoint
subsets of $S$ (called parts of the partition), whose union is $S$.
In the following, we always assume that $S$ is finite.

Denote by $\PPP(S)$ the set of set partitions of $S$.
Then $\PPP(S)$ may be endowed with a natural partial order:
the {\em refinement} order.
We say that $\pi$ is {\em finer} than $\pi'$ (or $\pi'$ {\em coarser} than $\pi$)
if every part of $\pi$ is included in some part of $\pi'$.
We denote this by $\pi \leq \pi'$.

Endowed with this order, $\PPP(S)$ is a complete lattice, which means that
each family $F$ of set partitions admits a join (the finest set partition
which is coarser than all set partitions in $F$; we denote the join operator by $\vee$)
and a meet (the coarsest set partition
which is finer than all set partitions in $F$; we denote the meet operator by $\wedge$).
In particular, the lattice $\PPP(S)$ has a maximum $\{S\}$ (the partition in only one
part) and a minimum $\{ \{x\}, x \in S\}$ (the partition in singletons).

Moreover, this lattice is ranked:
the rank $\rk(\pi)$ of a set partition $\pi$ is $|S|-\#(\pi)$,
where $\#(\pi)$ denotes the number of parts of $\pi$.
The rank is compatible with the lattice structure in the following sense:
for any two set partitions $\pi$ and $\pi'$,
\begin{equation}\label{EqRkJoin}
\rk(\pi \vee \pi') \leq \rk(\pi) + \rk(\pi').
\end{equation}

Lastly, denote by $\mu$ the M\"obius function of the partition lattice $\PPP(S)$.
In this paper, we only use evaluations of $\mu$ at pairs $(\pi,\{S\})$
(that is, the second argument is the one-part partition of $S$,
which is the maximum of $\PPP(S)$).
In this case, the value of the M\"obius function is given by:
\begin{equation}\label{EqValueMobius}
    \mu(\pi, \{S\})=(-1)^{\#(\pi)-1}\ (\# (\pi)-1)!.
\end{equation}

\section{Cumulants}
\label{sec:cumulants}

\subsection{Partial cumulants}
\begin{definition}    
    Let $(u_I)_{I \subseteq J}$
    be a family of elements in a field,
    indexed by subsets of a finite set $J$.
    Then its {\em partial cumulants} are defined as follows.
        For any non-empty subset $H$ of $J$, set
        \begin{equation}
            \ka_H(\uu) = \sum_{\substack{\pi \in \PPP(H) } } 
            \mu(\pi, \{H\}) \prod_{B \in \pi} u_B.
        \label{EqDefCumulants}
    \end{equation}
\end{definition}
The terminology comes from probability theory.
Let $J = [r]$, and let $X_1,\dots,X_r$ be random variables with finite moments
defined on the same probability space.
Then define $u_I=\mathbb{E}(\prod_{i \in I} X_i)$,
where $\mathbb{E}$ denotes the expected value.
The quantity $\ka_{[\r]}(\uu)$ as defined above,
is known as the {joint (or mixed) cumulant} of the random variables
$X_1,\dots,X_r$.
Also, $\ka_{H}(\uu)$ is the joint/mixed cumulant of the smaller family
$\{X_h, h \in H\}$.

Joint/mixed cumulants have been studied by Leonov and Shiryaev
in \cite{LeonovShiryaev1959} (see also an older note
of Schützenberger \cite{Schutzenberger1947}, where they are introduced
under the French name {\em déviation d'indépendence}).
They now appear 
in random graph theory \cite[Chapter 6]{JansonLuczakRucinski2000}
and have inspired a lot of work
in noncommutative probability theory;
see \cite{Novak_Sniady:What_is_free_cum} for a concise introduction to the topic.

Even if this probabilistic interpretation of cumulants is not relevant here,
we will use several lemmas that have been discovered
by the second author 
in a probabilistic context \cite{Feray2013}.

A classical result -- see, {\em e.g.},
\cite[Proposition 6.16 (vi)]{JansonLuczakRucinski2000} --
is that relation \eqref{EqDefCumulants} can be inverted as follows:
for any non-empty subset $H$ of $J$, 
\begin{equation}
   u_H = \sum_{\substack{\pi \in \PPP(H) } } 
 \prod_{B \in \pi} \ka_B(\uu).
\label{EqMomCum}
\end{equation}

\subsection{A multiplicative criterion for small cumulants}

Let $R$ be a field and $\a$ a formal parameter.
Denote by $R(\a)$ the field of rational functions in $\a$ with coefficients in $R$.
In all applications in this paper, $\a$ is the Jack parameter.

\begin{definition}
\label{def:Osymbol}
We use the following notation: for $r\in R(\a)$ and an integer $k$,
we write $r=O(\a^k)$ if the rational function 
$r \cdot \a^{-k}$ has no pole in $0$.
\end{definition}

As above, we consider a family $\uu=(u_I)_{I \subseteq [\r]}$ 
of elements of $R(\a)$ indexed by subsets of $[\r]$.
Throughout this section, we also assume that these elements are {\bf non-zero}
and $u_\emptyset=1$.

In addition to partial cumulants,
we also define the {\em cumulative factorization error terms} $T_H(\uu)$
of the family $\uu$.
The quantities $T_H(\uu)_{H \subseteq [\r],|H| \ge 2}$  are
inductively defined as follows:
for any subset $G$ of $[\r]$ of size at least $2$,
\begin{equation}
u_G = \prod_{g \in G} u_{\{g\}} \cdot  
\prod_{H \subseteq G \atop |H| \ge 2} (1 + T_H(\uu)).
\label{EqDefTInduction}
\end{equation}
Using inclusion-exclusion principle, 
a direct equivalent definition is the following one:
for any subset $H$ of $[\r]$ of size at least 2, set
\begin{equation}
T_H(\uu) = \prod_{G \subseteq H} u_G^{(-1)^{|H|}} - 1.
\label{EqDefTDirect}
\end{equation}

We have the following result.
\begin{proposition}
    \label{PropEquivalenceSCQF}
    Using the above notation, the following statements are equivalent:
    \begin{enumerate}[label=\Roman*]
        \item \hspace{-.2cm}. \label{ItemQuasiFactorization}
            {\em Strong factorization property:}
            for any subset $H \subseteq [\r]$ of size at least $2$, one has
            \begin{equation}\label{EqQuasiFactorization}
                T_{H}(\uu)
                = O\left(\a^{|H|-1}\right).
            \end{equation}
        \item \hspace{-.2cm}. \label{ItemSmallCumulants}
            {\em Small cumulant property:}
            for any subset $H \subseteq [\r]$ of size at least $2$, one has
            \begin{equation}\label{EqSmallCumulants}
                \ka_H(\uu) = \left( \prod_{h \in H} u_h \right) O\left(\a^{|H|-1}\right).
            \end{equation}
    \end{enumerate}
\end{proposition}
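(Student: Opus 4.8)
The plan is to prove the equivalence of the two statements by relating both sides to the single multiplicative quantity appearing in the direct definition of $T_H(\uu)$. The key observation is that, after dividing out the first-order (singleton) factors, the products $u_G$ should be compared in their normalized form. First I would introduce the normalized family $\widetilde{u}_G := u_G / \prod_{g \in G} u_{\{g\}}$, so that $\widetilde{u}_{\{g\}} = 1$ for all singletons and $1 + T_H(\uu) = \prod_{G \subseteq H} \widetilde{u}_G^{(-1)^{|H|}}$ — wait, more precisely, by \eqref{EqDefTDirect} the factorization error depends only on the $u_G$ through the alternating product, and one checks that replacing $u_G$ by $\widetilde{u}_G$ leaves $T_H$ unchanged because the singleton factors cancel in the alternating sum of exponents $\sum_{G \subseteq H}(-1)^{|H|} = 0$ for $|H| \ge 1$. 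This reduction lets me assume without loss of generality that all singletons equal $1$, which simplifies both the cumulant formula \eqref{EqDefCumulants} and the error-term formula \eqref{EqDefTDirect}.

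The heart of the argument is a direct algebraic comparison between the partial cumulant $\ka_H(\uu)$ and the error terms $\{T_G(\uu)\}_{G \subseteq H}$. I would proceed by induction on $|H|$. The base case $|H| = 2$ is immediate since $\ka_{\{i,j\}}(\uu) = u_{\{i,j\}} - u_{\{i\}}u_{\{j\}} = u_{\{i\}}u_{\{j\}}\, T_{\{i,j\}}(\uu)$, so the two estimates coincide exactly. For the inductive step, I would substitute the moment-cumulant inversion \eqref{EqMomCum} into the defining product \eqref{EqDefTInduction}, or equivalently expand $\log(1 + T_H)$ and $\ka_H$ through their respective set-partition sums, and show that the leading (most singular) contribution as $\a \to 0$ is the same for both, namely the term indexed by the one-block partition $\{H\}$, while all coarser-block contributions are controlled by smaller cumulants or smaller error terms via the induction hypothesis. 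Concretely, from \eqref{EqMomCum} one has $\widetilde{u}_H = \sum_{\pi \in \PPP(H)} \prod_{B \in \pi} \ka_B(\uu)/\prod_g u_{\{g\}}$, and isolating the full partition $\{H\}$ gives $\ka_H(\uu) = \big(\prod_h u_h\big)T_H(\uu) + (\text{products of } \ka_B \text{ over coarser } \pi)$; the subleading sum involves only cumulants $\ka_B$ with $2 \le |B| < |H|$ paired with singletons, and the rank inequality \eqref{EqRkJoin} together with the inductive hypothesis shows each such product is $O(\a^{\sum_B(|B|-1)}) = O(\a^{|H| - \#(\pi)})$, which is $O(\a^{|H|-1})$ precisely when $\pi \ne \{H\}$ has at most $|H|-1$ nonsingleton-contribution, so these are strictly higher order and do not affect the equivalence of the leading estimates.

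The main obstacle I anticipate is bookkeeping the orders correctly in the subleading set-partition sum: for a partition $\pi$ with blocks $B_1, \dots, B_k$, the product $\prod \ka_{B_i}$ carries order at least $\a^{\sum_i (|B_i|-1)} = \a^{|H|-k}$, and I must verify that whenever such a term could be \emph{less} singular than $\a^{|H|-1}$ — that is, whenever $k \ge 2$ — it genuinely lies in the error budget, so that matching the $\{H\}$-term alone suffices to transfer an estimate from one formulation to the other in both directions. The two directions (\ref{ItemQuasiFactorization} $\Rightarrow$ \ref{ItemSmallCumulants} and its converse) then follow symmetrically from this single leading-order identity, using that all $u_{\{g\}}$ are nonzero so that $\prod_h u_h = O(1)$ and its inverse is $O(1)$, which legitimizes dividing by the singleton product throughout. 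I would close by remarking that the induction is well-founded because every nontrivial proper subset appearing in the subleading terms has size strictly smaller than $|H|$.
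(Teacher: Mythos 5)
Your reduction to the case $u_{\{i\}}=1$ and your base case $|H|=2$ are both fine (and match the paper, which performs the same normalization). The inductive step, however, has a genuine gap, located exactly at the point you flag as the ``main obstacle''. Isolating the one-block partition in the moment--cumulant inversion \eqref{EqMomCum} gives
\[
\ka_H(\uu) \;=\; u_H \;-\; \sum_{\substack{\pi\in\PPP(H)\\ \pi\neq\{H\}}} \ \prod_{B\in\pi}\ka_B(\uu),
\]
and for a partition $\pi$ with $\#(\pi)=k\ge 2$ blocks the inductive hypothesis only gives $\prod_{B\in\pi}\ka_B(\uu)=O(\a^{|H|-k})$. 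This is a \emph{weaker} bound than $O(\a^{|H|-1})$, not a stronger one: with $H=[4]$ and $\pi=\{\{1,2\},\{3,4\}\}$ one only gets $\ka_{\{1,2\}}(\uu)\,\ka_{\{3,4\}}(\uu)=O(\a^2)$, whereas the target is $O(\a^3)$. So your assertion that the coarser-partition terms are ``strictly higher order and do not affect the equivalence'' has the inequality backwards: these terms exceed the error budget and cannot be discarded, and the claimed relation $\ka_H=\bigl(\prod_h u_h\bigr)T_H + (\text{negligible})$ is not an identity and not even a valid estimate under the inductive hypothesis alone (it also needs control of $u_H$ itself, which is only available from the level-$H$ hypothesis). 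What actually happens is that these large contributions \emph{cancel} against cross terms arising when $u_H$ is expanded multiplicatively via \eqref{EqDefTInduction} (for $|H|=4$, the term $\ka_{\{1,2\}}\ka_{\{3,4\}}$ cancels against the product $T_{\{1,2\}}(\uu)T_{\{3,4\}}(\uu)$ appearing in the expansion of $u_H$); your proposal provides no mechanism for this cancellation.

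That cancellation is precisely what the paper's proof organizes. For \ref{ItemQuasiFactorization} $\Rightarrow$ \ref{ItemSmallCumulants}, it expands each $u_B$ as a sum of products $T_{H_1}(\uu)\cdots T_{H_m}(\uu)$, substitutes into \eqref{EqDefCumulants}, and observes that the M\"obius sum $\sum_{\pi\ge\Pi(H_1)\vee\cdots\vee\Pi(H_m)}\mu\big(\pi,\{[r]\}\big)$ vanishes unless the hypergraph with edges $H_1,\dots,H_m$ is connected; connectivity together with the rank inequality \eqref{EqRkJoin} forces $\sum_i(|H_i|-1)\ge r-1$, so only terms of order $O(\a^{r-1})$ survive. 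For the converse the paper does not argue ``symmetrically'' (indeed it cannot, for the reason above): it inducts on $r$ and compares $\uu$ with an auxiliary family $\vv$ agreeing with $\uu$ on proper subsets but with $v_{[r]}$ redefined so that $T_{[r]}(\vv)=0$; applying the first implication to $\vv$ and using the hypothesis $\ka_{[r]}(\uu)=O(\a^{r-1})$ yields $u_{[r]}-v_{[r]}=\ka_{[r]}(\uu)-\ka_{[r]}(\vv)=O(\a^{r-1})$, which is the desired estimate on $T_{[r]}(\uu)$. To repair your argument you would need to supply this cancellation mechanism (or an equivalent one), at which point you would essentially be reproducing the paper's proof.
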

This proposition is a reformulation of \cite[Lemma 2.2]{Feray2013}.
However, the context and notation are quite different:
in \cite{Feray2013}, the author is interested in sequences of random variables,
while here we consider rational functions in $\a$.
Thus,
we prefer to copy the proof here, adapting it to our context.
\begin{proof}
    We first assume that $u_{\{i\}}=1$ for all $i$ in $[\r]$.

    Let us first show that \ref{ItemQuasiFactorization} implies
    \ref{ItemSmallCumulants}.
    Assume that $T_{H}(\uu)= O\left(\a^{|H|-1}\right)$,
    for any $H \subseteq [\r]$
    of size at least $2$.
    The goal is to prove that $ \kappa_{[\r]}(\uu)= O\left(\a^{\r-1}\right)$.
    This corresponds only to the case $H = [\r]$ of \ref{ItemSmallCumulants},
    but the same proof will work for any $H \subseteq [\r]$.

    Fix a set partition $\pi \in \PPP(\r)$.
    For each block $B$ of $\pi$, we expand the second product in \cref{EqDefTInduction}:
    \[ u_{B} = 
    \sum_{\{H_1,\dots,H_m\}\, \subseteq\, 2^{B}_{\geq2}} T_{H_1}(\uu) \cdots T_{H_m}(\uu),
    \]
    where $2^{B}_{\geq2}$ denotes the set of subsets of $B$
    of size at least $2$;
    the sum here runs over all (unordered) subsets 
    of $2^{B}_{\geq2}$ (in particular, the size $m$
    of this subset is not fixed).
    Therefore,
    \[\prod_{B \in \pi} u_B= \sum_{\{H_1,\dots,H_m\}\, \subseteq\, 2^{[r]}_{\geq2},\  (\star)} 
    T_{H_1}(\uu) \cdots T_{H_m}(\uu),
    \]
    where the sum runs over all sets 
    of subsets of $[\r]$ of size at least $2$
    with the following property, denoted by $(\star)$:
      each $H_i$ is contained in some block of $\pi$.
    In other terms, for each $i \in [m]$, $\pi$ must be coarser
    than the partition $\Pi(H_i)$,
    which, by definition, has $H_i$ and singletons as blocks.
    Using \cref{EqDefCumulants} and reorganizing, we get
    \begin{equation}\label{EqCumulantsSumSets}
        \kappa_{[\r]}(\uu) = \sum_{\{H_1,\dots,H_m\}\, \subseteq\, 2^{[r]}_{\geq2}} T_{H_1}(\uu) \cdots T_{H_m}(\uu) 
    \left( \sum_{\pi \in \PPP([\r]) \atop \forall i, \ \pi \geq \Pi(H_i)}
    \mu\big(\pi, \{[\r]\}\big) \right).
\end{equation}
The condition on $\pi$ can be rewritten as
\[\pi \geq \Pi(H_1) \vee \dots \vee  \Pi(H_m).\]
Hence, by definition of the M\"obius function, the sum in the parenthesis
is equal to $0$, unless $\Pi(H_1) \vee \dots \vee  \Pi(H_m) = \{[\r]\}$
(in other terms, unless the hypergraph with vertex set $[r]$ and edges $(H_i)_{1\le i \le m}$ is
connected).
On the one hand, by \cref{EqRkJoin}, it may happen only if:
\[ \sum_{i=1}^m \rk\big(\Pi(H_i)\big) = \sum_{i=1}^m (|H_i|-1) \geq \rk([\r]) = \r - 1.\]
On the other hand, one has
\[T_{H_1}(\uu) \cdots T_{H_m}(\uu) = O \left( \a^{\sum_{i=1}^m (|H_i|-1)} \right).\]
Hence only summands of order of magnitude $O(\a^k)$ for $k \ge r-1$ survive and one has
\[\kappa_{[\r]}(\uu) = O(\a^{\r-1}), \]
as wanted.
\bigskip

Let us now consider the converse statement.
We proceed by induction on $\r$ and we assume that 
for all $\r'$ smaller than a given $\r \geq 2$ 
the proposition holds.

Consider some family $(u_I)_{I \subseteq [\r]}$
such that \ref{ItemSmallCumulants} holds.
By induction hypothesis, for all $H \subsetneq [\r]$,
one has $T_H(\uu)=O(\a^{|H|-1})$.
Note that \cref{EqDefTInduction} then implies
$u_H=O(1)$ and $u_H^{-1}=O(1)$ for $H \subsetneq [\r]$.
It remains to prove that 
\[ T_{[\r]}(\uu) = \prod_{H \subseteq [\r]}       
      (u_H)^{(-1)^{|H|}} -1 =O(\a^{\r-1}).\]
Thanks to the estimates above for $u_H$, this can be rewritten as
\begin{equation}\label{EqRewriteQuasiFact}
    u_{[\r]} = \prod_{H \subsetneq [\r]} 
      (u_{H})^{(-1)^{\r-1-|H|}}
      + O(\a^{\r-1}).
  \end{equation}
  Define now an auxiliary family $\vv$:
  \[v_G = \begin{cases}
      u_G & \text{ if }G \subsetneq [\r];\\
      \prod_{H \subsetneq [\r]}          
      (u_{H})^{(-1)^{\r-1-|H|}} & \text{ for }G=[\r].
  \end{cases}\]
Clearly, since $T_G(\vv)=T_G(\uu)$ for $G \subsetneq [\r]$
and $T_{[\r]}(\vv)=0$,
the family $\vv$ has the strong factorization property.
Thus, using the first
part of the proof, it also has the small cumulant property.
In particular:
\[\kappa_{[\r]}(\vv) = O\left(\a^{\r-1}\right).\]
But, by hypothesis,
\[\kappa_{[\r]}(\uu) = O\left(\a^{\r-1}\right).\]
As $v_H=u_H$ for $H \subsetneq [\r]$, one has:
\[u_{[\r]} - v_{[\r]} =
\kappa_{[\r]}(\uu) - \kappa_{[\r]}(\vv) =
 O\left(\a^{\r-1}\right),\]
which proves \cref{EqRewriteQuasiFact}.
\bigskip

The general case follows directly from the case $u_{\{i\}}=1$
by considering the family $u'_I=u_I/\prod_{i \in I} u_{\{i\}}$.
    Indeed, for $|H| \ge 2$, it holds that
    \begin{align*}
        T_H(\uu')&=T_H(\uu);\\
        K_H(\uu')&=K_H(\uu) 
        \cdot \left(\prod_{h \in H} u_{\{h\}}\right)^{-1}. \qedhere
    \end{align*}
\end{proof}

A first consequence of this multiplicative criterion for small cumulants
is the following stability result.
\begin{corollary}
    Consider two families $(u_I)_{I \subseteq [\r]}$
    and $(v_I)_{I \subseteq [\r]}$ with the small cumulant property.
    Then their entry-wise product $(u_I v_I)_{I \subseteq [\r]}$ and quotient
    $(u_I/v_I)_{I \subseteq [\r]}$ also have the small cumulant property.
    \label{corol:stable_product}
\end{corollary}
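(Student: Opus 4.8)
The plan is to exploit the multiplicative criterion established in \cref{PropEquivalenceSCQF}, which tells us that the small cumulant property is equivalent to the strong factorization property. Working with the factorization formulation is more convenient here because factorization errors multiply cleanly, whereas cumulants do not. So the first step is to translate the hypotheses: since both families $(u_I)$ and $(v_I)$ have the small cumulant property, \cref{PropEquivalenceSCQF} gives us that they both have the strong factorization property, i.e. $T_H(\uu) = O(\a^{|H|-1})$ and $T_H(\vv) = O(\a^{|H|-1})$ for every $H \subseteq [\r]$ with $|H| \geq 2$. The goal then becomes to show that the product family $(u_I v_I)$ and the quotient family $(u_I / v_I)$ also have the strong factorization property, after which a second application of \cref{PropEquivalenceSCQF} (in the reverse direction) finishes the argument.

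The key computational observation is that the factorization error term $T_H$ behaves multiplicatively with respect to the entry-wise product of families. Indeed, from the direct definition \cref{EqDefTDirect},
\begin{equation*}
1 + T_H(\uu \vv) = \prod_{G \subseteq H} (u_G v_G)^{(-1)^{|H|}}
= \left( \prod_{G \subseteq H} u_G^{(-1)^{|H|}} \right) \left( \prod_{G \subseteq H} v_G^{(-1)^{|H|}} \right)
= \big(1 + T_H(\uu)\big)\big(1 + T_H(\vv)\big),
\end{equation*}
so that $T_H(\uu\vv) = T_H(\uu) + T_H(\vv) + T_H(\uu) T_H(\vv)$. By the same token, for the quotient one obtains $1 + T_H(\uu/\vv) = \big(1 + T_H(\uu)\big)\big(1 + T_H(\vv)\big)^{-1}$. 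I would then check that each of these combinations preserves the estimate $O(\a^{|H|-1})$: the sum $T_H(\uu) + T_H(\vv)$ is $O(\a^{|H|-1})$ since both summands are, the cross term $T_H(\uu)T_H(\vv)$ is even smaller, of order $O(\a^{2(|H|-1)})$, and for the quotient one uses that $\big(1 + T_H(\vv)\big)^{-1} = 1 - T_H(\vv) + O(\a^{2(|H|-1)})$ because $T_H(\vv) = O(\a^{|H|-1})$ has no pole at $0$.

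Putting these together yields $T_H(\uu\vv) = O(\a^{|H|-1})$ and $T_H(\uu/\vv) = O(\a^{|H|-1})$ for all $H$ of size at least $2$, which is exactly the strong factorization property for the product and quotient families. Applying \cref{PropEquivalenceSCQF} once more then delivers the small cumulant property for both, as claimed. I expect no genuine obstacle here: the entire argument rests on the clean multiplicativity of $T_H$ under $\cdot$ and $/$, and the only point requiring a moment of care is confirming that the relevant quantities (in particular $1 + T_H(\vv)$) are invertible as elements of $R(\a)$ and have neither a pole nor a zero at $\a = 0$, so that the expansion of the inverse is legitimate — but this follows from the standing assumption that the $u_I$ and $v_I$ are nonzero together with the estimate $T_H(\vv) = O(\a^{|H|-1})$, which forces $1 + T_H(\vv)$ to equal $1$ at $\a = 0$.
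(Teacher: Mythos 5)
Your proof is correct and takes essentially the same route as the paper: the paper's own proof consists of the single remark that the claim ``is trivial for the strong factorization property and the small cumulant property is equivalent to it,'' which is precisely your argument — pass to strong factorization via \cref{PropEquivalenceSCQF}, use the multiplicativity $1+T_H(\uu\vv)=\bigl(1+T_H(\uu)\bigr)\bigl(1+T_H(\vv)\bigr)$ coming from \cref{EqDefTDirect}, and translate back. Your write-up simply makes explicit the order-of-magnitude estimates and the invertibility of $1+T_H(\vv)$ at $\a=0$ that the paper leaves unstated.
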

\begin{proof}
    This is trivial for the strong factorization property and
    the small cumulant property is equivalent to it.
\end{proof}
\bigskip

\subsection{Hook cumulants}
To illustrate the above propositions and as a preparation for our next results,
we show in this section that some families involving hook polynomials 
have the small cumulant property.

We first consider $\hook_\a$ and start with a technical lemma.
\begin{lemma}
Fix a positive integer $r$ and a subset $K$ of $[r]$.
Let $(c_i)_{i \in K}$ be a family of elements of $R(\a)$, and let $C
\in R(\a)$ with $C \ne 0$.
Assume that $C$, $C^{-1}$ and all $c_i$ are $O(1)$.
For a subset $I$ of $K$ we define
\[ v_I=C + \a \cdot \sum_{i \in I} c_i.\]
Then we have, for any subset $H$ of $K$,
\[ T_H(\vv) = O(\a^{|H|}).\]
\label{LemUsefulExample}
\end{lemma}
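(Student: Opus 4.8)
The plan is to exploit the very special affine shape of $v_I$ in $\a$ and to compute $T_H(\vv)$ explicitly, rather than going through \cref{PropEquivalenceSCQF}, which would only deliver the weaker estimate $O(\a^{|H|-1})$ (here we need to gain one extra power). Starting from the closed form \cref{EqDefTDirect},
\[ 1 + T_H(\vv) = \prod_{G \subseteq H} v_G^{(-1)^{|H \setminus G|}}, \]
I would first factor out $C$. Writing $v_G = C\,(1 + \a\, s_G)$ with $s_G := \sum_{i \in G} c_i/C$, and using that $\sum_{G \subseteq H}(-1)^{|H\setminus G|} = 0$ for $H \neq \emptyset$, the powers of $C$ cancel and one is left with
\[ 1 + T_H(\vv) = \prod_{G \subseteq H}(1 + \a\, s_G)^{(-1)^{|H\setminus G|}}. \]
Since $C$, $C^{-1}$ and the $c_i$ are $O(1)$ with $C$ nonzero at $\a=0$, each $s_G$ is $O(1)$ and each factor $1 + \a s_G$ is a unit of $R[[\a]]$ equal to $1$ at $\a = 0$; in particular all the logarithms below are honest power series of positive $\a$-adic valuation.

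Next I would pass to logarithms in $R[[\a]]$ and expand, obtaining
\[ \log\big(1 + T_H(\vv)\big) = \sum_{G \subseteq H}(-1)^{|H\setminus G|}\log(1 + \a s_G) = \sum_{k \ge 1}\frac{(-1)^{k-1}}{k}\,\a^k\, A_k, \]
where $A_k := \sum_{G \subseteq H}(-1)^{|H\setminus G|} s_G^{\,k}$. The crucial point is that $A_k$ vanishes for $k < |H|$. Indeed, writing $d_i := c_i/C$ and expanding $s_G^k = \big(\sum_{i \in H} d_i\, [i \in G]\big)^k$ yields $s_G^{\,k} = \sum_{(i_1,\dots,i_k) \in H^k} d_{i_1}\cdots d_{i_k}\,[\{i_1,\dots,i_k\}\subseteq G]$, where the coefficients $d_{i_1}\cdots d_{i_k}$ do \emph{not} depend on $G$. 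Since for any $S \subseteq H$ one has $\sum_{G\subseteq H}(-1)^{|H\setminus G|}[S \subseteq G] = [S = H]$ (reindex by $G = S \cup G'$ with $G' \subseteq H \setminus S$ and use $\sum_{G' \subseteq H\setminus S}(-1)^{|(H\setminus S)\setminus G'|} = [H\setminus S = \emptyset]$), only tuples $(i_1,\dots,i_k)$ whose support is all of $H$ contribute to $A_k$. Such a tuple exists only when $k \ge |H|$, so $A_k = 0$ for $k < |H|$. Consequently $\log(1 + T_H(\vv)) = \sum_{k \ge |H|}\frac{(-1)^{k-1}}{k}\,\a^k A_k$, each $A_k$ ($k \ge |H|$) is a finite sum of products of the $O(1)$ quantities $d_i$ and hence $O(1)$, so the valuation of $\log(1 + T_H(\vv))$ is at least $|H|$. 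Exponentiating back, $T_H(\vv) = \exp\big(\log(1+T_H(\vv))\big) - 1$ has the same valuation, giving $T_H(\vv) = O(\a^{|H|})$ in the sense of \cref{def:Osymbol}.

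The only genuine content is the finite-difference annihilation $A_k = 0$ for $k < |H|$, and the step I would check most carefully is that the coefficients $d_{i_1}\cdots d_{i_k}$ are independent of $G$, since this is precisely what lets the inclusion–exclusion sum over $G$ act on the indicators alone, with the $\a$-dependence of the $d_i$ playing no role. The remaining points are purely formal: the manipulations with $\log$ and $\exp$ in $R[[\a]]$ are legitimate because each $v_G/C \equiv 1 \pmod{\a}$, and the $\a$-adic valuation in $R[[\a]]$ agrees with the order of vanishing at $0$ of the rational function $T_H(\vv)$ because the latter has no pole at $0$.
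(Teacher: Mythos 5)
Your proof is correct (over a field of characteristic zero, which covers every application of this lemma in the paper), but it takes a genuinely different route from the paper's. The paper never takes logarithms: it first reduces to $H=K$, writes $1+T_K(\vv)$ as the ratio $R_\ev/R_\odd$ of the products of the factors $C+\a\sum_{i\in\delta}c_i$ over even-size and odd-size subsets $\delta\subseteq K$, reduces the claim to $R_\ev-R_\odd=O(\a^{|K|})$, expands both products --- a degree-$m$ term being indexed by $m$ distinct subsets $\delta_1,\dots,\delta_m$ with marked elements $i_h\in\delta_h$ --- and then cancels every term with $m<|K|$ by the sign-reversing bijection $(\delta_1,\dots,\delta_m)\mapsto(\delta_1\nabla\{i_0\},\dots,\delta_m\nabla\{i_0\})$, where $i_0\in K$ avoids all the $i_h$. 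Your logarithmic linearization replaces that bookkeeping with the single finite-difference identity $A_k=\sum_{G\subseteq H}(-1)^{|H\setminus G|}s_G^{\,k}=0$ for $k<|H|$, whose proof (indicator expansion plus $\sum_{G\subseteq H}(-1)^{|H\setminus G|}[S\subseteq G]=[S=H]$) is the same involution principle in a cleaner, linearized guise. What your route buys is brevity and a transparent explanation of where the gain of one power of $\a$ per element of $H$ comes from; what it costs is the use of formal $\log$ and $\exp$, which requires $\QQ\subseteq R$, whereas the paper's expansion-and-involution argument works over any field. Two details you resolved correctly and that genuinely require care here: the exponent printed in \cref{EqDefTDirect} as $(-1)^{|H|}$ is a typo for $(-1)^{|H\setminus G|}$, and your reading is the one consistent with \cref{EqDefTInduction} and with the application in \cref{PropKumuHook}; and since the family in this lemma has $v_\emptyset=C\neq 1$, the quantity $T_H(\vv)$ must be interpreted through the direct product formula including the $G=\emptyset$ factor --- exactly as both you and the paper's proof do --- so that your cancellation of all powers of $C$ via $\sum_{G\subseteq H}(-1)^{|H\setminus G|}=0$ is the right way to handle the non-normalized empty set.
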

    This is a reformulation of \cite[Lemma 2.4]{Feray2013},
    but, again, as notation is quite different there, we adapt the proof to our context.
\begin{proof}
    It is enough to prove the statement for $H=K$.
    Indeed, the case of a general set $H$ follows by considering
    the same family restricted to subsets of $H$.
    
Define $R_\ev$ (resp. $R_\odd$) as
\[\prod_\delta \left( C + \a \sum_{i \in \delta} c_i \right),\]
where the product runs over subsets of $K$ of even (resp.~odd) size.
With this notation,
$T_K(\vv)=R_\ev/R_\odd -1=(R_\ev-R_\odd)/R_\odd$.
Since $R_\odd^{-1}=O(1)$ (each term in the product is $O(1)$, as well as its inverse),
it is enough to show that $R_\ev-R_\odd=O(\a^{|K|})$.

Expanding the product in the definition of $R_\ev$, one gets
\[R_\ev = \sum_{m \geq 0} \ \ 
\frac{1}{m!}\sum_{\delta_1,\dots,\delta_m} \ \ \sum_{i_1\in \delta_1,\dots,i_m \in \delta_m}  \a^m
c_{i_1} \cdots c_{i_m} C^{2^{|K|-1} - m}.\]
The index set of the second summation symbol is the set of lists of $m$ distinct
(but not necessarily disjoint) subsets of $K$ of even size.
Of course, a similar formula with subsets of odd size holds for $R_\odd$.

Let us fix an integer $m<|K|$ and a list $i_1,\dots,i_m$.
Denote by $i_0$ the smallest integer in $K$ different from $i_1,\dots,i_m$
(as $m<|K|$, such an integer necessarily exists).
Then one has a bijection:
\[ \begin{array}{rcl}
\left\{ \begin{array}{c}
    \text{lists of subsets}\\
\delta_1,\dots,\delta_m \text{ of even size such}\\
\text{that, for all}\ h\leq m, i_h \in \delta_h
\end{array}\right\}
& \to &
\left\{ \begin{array}{c}
    \text{lists of subsets}\\
\delta_1,\dots,\delta_m \text{ of odd size such}\\
\text{that, for all}\ h\leq m, i_h \in \delta_h
\end{array}\right\}\\
(\delta_1,\dots,\delta_m)
&\mapsto&
(\delta_1 \nabla \{i_0\},\dots,\delta_m \nabla \{i_0\}),
\end{array}\]
where $\nabla$ is the symmetric difference operator.
This bijection implies that the summand $ \a^m  
c_{i_1} \cdots c_{i_m} C^{2^{|K|-2} - m}$ appears as many times
in $R_\ev$ as in $R_\odd$.
Finally, in the difference $R_\ev - R_\odd$,
terms corresponding to values of $m$ smaller than $|K|$
cancel each other and one has
\[R_\ev - R_\odd = O\big(\a^{|K|}\big). \qedhere\]
\end{proof}

We recall that given partitions $\la^1,\dots,\la^r$ and a subset $I$ of $[\r]$ we set
\[ \la^I := \bigoplus_{i \in I}\la^i,\]
where $\oplus$ is the entrywise sum; see \cref{SubsecPartitions}.

\begin{proposition}
Fix some partitions $\la^1$, \dots, $\la^r$
and for a subset $I$ of $[r]$ denote $u_I=\hook_\alpha\left(\la^I\right)$.
    The family $(u_I)$ has the strong factorization property, and hence, the small cumulant property.
    \label{PropKumuHook}
\end{proposition}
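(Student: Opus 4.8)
The goal is to show that the family $u_I = \hook_\a(\la^I)$, for $I \subseteq [r]$, has the strong factorization property, which by \cref{PropEquivalenceSCQF} is equivalent to the small cumulant property. I would reduce this directly to \cref{LemUsefulExample}. The key observation is that the hook polynomial $\hook_\a(\la)$ from \cref{eq:HookProduct} is a product over boxes $\square \in \la$ of linear factors $\a\, a(\square) + \ell(\square) + 1$. The box set of $\la^I = \bigoplus_{i \in I}\la^i$ grows with $I$, so the product $\hook_\a(\la^I)$ ranges over a varying set of boxes as $I$ varies; this is not directly of the additive form $C + \a \sum_{i \in I} c_i$ required by the lemma. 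The main idea is therefore to reorganize the product box-by-box and show that the contribution of each box either is constant across all relevant $I$ or fits the additive template.

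\textbf{First step.} I would analyze how $a(\square)$ and $\ell(\square)$ behave for a fixed box $\square = (i,j)$ as $I$ grows. Since $\la^I = \bigoplus_{i\in I}\la^i$ is an entry-wise sum, the $j$-th part $(\la^I)_j = \sum_{i \in I} \la^i_j$ is \emph{additive} in $I$, whereas the leg-length $\ell(\square) = (\la^I)^t_i - j$ depends on the conjugate and is governed by how many parts of $\la^I$ are at least $i$ --- a quantity that is \emph{not} additive but is monotone and, crucially, stabilizes. The arm-length $a(\square) = (\la^I)_j - i$ is additive in $I$. Thus the linear factor attached to $\square$ reads $\a\big((\la^I)_j - i\big) + \ell(\square) + 1$, whose $\a$-coefficient is additive in $I$ but whose constant term, through $\ell(\square)$, need not be.

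\textbf{Second step and main obstacle.} The difficulty is precisely that $\hook_\a(\la^I)$ is not literally of the form $C + \a\sum_{i\in I} c_i$ handled by \cref{LemUsefulExample}, because of the $I$-dependence of the \emph{box set} and of the leg-lengths. To handle this I would group the factors of $\hook_\a(\la^I)$ by columns (i.e.\ by the index $j$) and work out, for each column, a product of linear factors in $\a$ whose total number of boxes is itself additive in $I$. I expect that after this reorganization the full product $\hook_\a(\la^I)$ factors as $\prod_j F_j$, where each $F_j$ depends on $I$ only through the additive quantity $(\la^I)_j = \sum_{i\in I}\la^i_j$, and moreover that at $\a = 0$ the constant term is independent of which columns are short or long. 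The precise reduction to the additive form $C + \a\sum_{i\in I} c_i$ --- i.e.\ isolating a genuine $O(1)$ constant $C$ with $C^{-1}=O(1)$ and showing the remaining $I$-dependence is $\a$ times an additive term --- is where the real work lies, and I expect this bookkeeping over columns and leg-lengths to be the principal obstacle.

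\textbf{Conclusion.} Once the factorization $u_I = C + \a\sum_{i\in I}c_i$ (with $C, C^{-1}, c_i$ all $O(1)$) is established, \cref{LemUsefulExample} yields $T_H(\uu) = O(\a^{|H|})$ for every $H$, which is even stronger than the required $O(\a^{|H|-1})$; in particular the strong factorization property \eqref{EqQuasiFactorization} holds. By \cref{PropEquivalenceSCQF} this is equivalent to the small cumulant property, completing the proof. An alternative, possibly cleaner route I would keep in mind is to write $\log \hook_\a(\la^I)$ as a sum over boxes of $\log(\a\, a(\square) + \ell(\square)+1)$ and to exploit additivity of the box contributions directly at the level of cumulants, but the reduction to \cref{LemUsefulExample} seems the most economical given the tools already developed in the excerpt.
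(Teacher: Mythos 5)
Your overall strategy --- reducing to \cref{LemUsefulExample} --- is the right target, and you correctly diagnose the obstacle (the box set of $\la^I$ and the leg-lengths vary with $I$). But the reduction you propose cannot be completed, because its intended conclusion is false. If the whole family $(u_I)$ could be written as $u_I = C+\a\sum_{i\in I}c_i$ with $C,\,C^{-1},\,c_i$ all $O(1)$, then \cref{LemUsefulExample} would give $T_H(\uu)=O(\a^{|H|})$, as you note. However, take $\la^1=\la^2=(1)$: then $u_{\{1\}}=u_{\{2\}}=\hook_\a\big((1)\big)=1$ while $u_{\{1,2\}}=\hook_\a\big((2)\big)=1+\a$, so $T_{\{1,2\}}(\uu)=\a$, which is \emph{not} $O(\a^{2})$. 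The hook family genuinely satisfies only the exponent $|H|-1$ of strong factorization, not the exponent $|H|$ of the lemma, so no global matching with the lemma's template can exist (concretely: matching at $I=\emptyset,\{1\},\{2\}$ forces $C=1$ and $c_1=c_2=0$, contradicting $u_{\{1,2\}}=1+\a$). Your intermediate step fails as well: grouping the hook factors by the index $j$, the factor $F_j$ does \emph{not} depend on $I$ only through the additive quantity $(\la^I)_j$, because the leg-length of a box $(i,j)$ is $(\la^I)^t_i-j$, which involves all the other parts of $\la^I$. So the plan stalls exactly at the point you yourself flag as ``where the real work lies.''

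The two ideas you are missing are the ones the paper's proof is built on. First, work with columns rather than parts: since $(\la\cup\mu)^t=\la^t\oplus\mu^t$, the diagram $\la^I$ is obtained by merging and sorting the \emph{columns} of the $\la^i$, $i\in I$; this identifies every box of $\la^I$ with a box $b$ of a unique $\la^g$, and under this identification the leg-length of $b$ in $\la^I$ is the \emph{same} as in $\la^g$ (constant in $I$), while the arm-length becomes additive, $a_I(b)=\sum_{i\in I}a_i(b)$. Second, apply \cref{LemUsefulExample} \emph{per box}, not to $u_I$ globally: substituting the resulting factorization $u_G=\prod_{g\in G}\prod_{b\in\la^g}\big(\ell(b)+1+\a\,a_G(b)\big)$ into the alternating product \cref{EqDefTDirect} defining $T_{[r]}(\uu)$ and exchanging the order of the products, each box $b\in\la^g$ contributes a factor equal to $1+T_{[r]\setminus\{g\}}(\vv^b)$, where $v^b_I=\ell(b)+1+\a\,a_{I\cup\{g\}}(b)$ for $I\subseteq[r]\setminus\{g\}$. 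This family is exactly of the lemma's form, with $C=\ell(b)+1+\a\,a_g(b)$ invertible at $\a=0$ because $\ell(b)+1\geq 1$, so each box yields a factor $1+O(\a^{r-1})$; note that the lemma is applied with $K=[r]\setminus\{g\}$ of size $r-1$, which is precisely why $r-1$ (and not $r$) is the correct exponent, consistent with the counterexample above. Multiplying these finitely many per-box factors gives $1+T_{[r]}(\uu)=1+O(\a^{r-1})$, i.e.\ the strong factorization property, and \cref{PropEquivalenceSCQF} then yields the small cumulant property.
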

\begin{proof}
    It is enough to prove that $T_{[r]}(\uu) = O(\a^{r-1})$.

Fix some subset $I=\{i_1,\dots,i_t\}$ of $[r]$ with $i_1<\cdots<i_t$.
    Observe that the Young diagram $\la^I$ can be constructed 
    by sorting the columns of the diagrams $\la^{i_1}$, \ldots, $\la^{i_t}$ in decreasing order.
    When several columns have the same length, we put first the columns of $\la^{i_1}$,
    then those of $\la^{i_2}$ and so on; see \cref{fig:LegArmSum}
    (at the moment, please disregard symbols in boxes).
    This gives a way to identify boxes of $\la^I$ with boxes of the diagrams $\la^{i_s}$ ($1 \le s \le t$)
    that we shall use below.

    With this identification, if $b=(c,r)$ is a box in $\la^{g}$ for some $g \in I$, 
    its leg-length in $\la^I$ is the same as in $\la^{g}$.
    We denote it by $\ell(b)$.

    However, the arm-length of $b$ in $\la^I$ may be bigger 
    than the one in $\la^{g}$.
    We denote these two quantities by $a_I(b)$ and $a_{g}(b)$.
    Let us also define $a_i(b)$ for $i \neq g$ in $I$, as follows:
    \begin{itemize}
        \item for $i<g$, $a_i(b)$ is the number of boxes $b'$ in the $r$-th row of $\la^i$
            such that the size of the column of $b'$ is {\bf smaller than} the size of the column of $b$
            ({\em e.g.}, on \cref{fig:LegArmSum}, for $i=1$, these are
            boxes with a diamond);
        \item for $i>g$, $a_i(b)$ is the number of boxes $b'$ in the $r$-th row of $\la^i$
            such that the size of the column of $b'$ is {\bf at most} the size of the column of $b$
            ({\em e.g.}, on \cref{fig:LegArmSum}, for $i=3$, these are
                   boxes with an asterisk).
    \end{itemize}
    Looking at \cref{fig:LegArmSum}, it is easy to see that
    \begin{equation}
        a_I(b)= \sum_{i \in I} a_i(b).
        \label{EqArmLengthInSum}
    \end{equation}
    \begin{figure}[t]
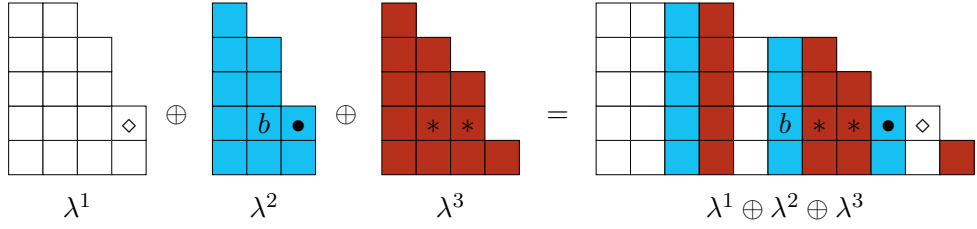

        \[
        \begin{array}{c}
        	    \YFrench
            \young(\ \ \ \ ,\ \ \ \diamond,\ \ \ ,\ \ \ ,\ \ ) \vspace{2mm} \\
        \la^1
    \end{array}\ \oplus\ \ 
    \begin{array}{c}
            \YFrench
            \Yfillcolour{cyan!70}
            \young(\ \ \ ,\ b\bullet,\ \ ,\ \ ,\ ) \vspace{2mm} \\
        \la^2
    \end{array}\ \oplus\ \ 
    \begin{array}{c}
            \YFrench
            \Yfillcolour{BrickRed}
            \young(\ \ \ \ ,\ \ast\ast,\ \ \ ,\ \ ,\ ) \vspace{2mm} \\
        \la^3
    \end{array}
    \ =\
    \begin{array}{c}
        \newcommand\yb{\Yfillcolour{cyan!70}}
        \newcommand\yr{\Yfillcolour{BrickRed}}
        \newcommand\yw{\Yfillcolour{white}}
        \YFrench
        \young(\ \ !\yb\ !\yr\ !\yw\ !\yb\ !\yr\ \ !\yb\ !\yw\ !\yr\ ,!\yw\ \ !\yb\ !\yr\ !\yw\ !\yb b!\yr\ast\ast!\yb\bullet!\yw\diamond,!\yw\ \ !\yb\ !\yr\ !\yw\ !\yb\ !\yr\ \ ,!\yw\ \ !\yb\ !\yr\ !\yw\ !\yb\ !\yr\ ,!\yw\ \ !\yb\ !\yr\ )
        \vspace{2mm} \\
        \la^1\oplus\la^2\oplus\la^3
    \end{array}\vspace{-4mm}
        \]
        \caption{The diagram of an entry-wise sum of partitions.}
        \label{fig:LegArmSum}
    \end{figure}
        Therefore, for $G \subseteq [r]$, one has:
    \[u_G=\hook_\a\left( \bigoplus_{g \in G} \la^g \right)
    = \prod_{g \in G} \left[ 
    \prod_{b \in \la^g} \ell(b)+1+ \a \ a_G(b) \right]. \] 
    From the definition of $T_{[r]}(\uu)$, given by \cref{EqDefTDirect}, we get:
    \begin{multline}
        1 + T_{[r]}(\uu) = \prod_{G \subseteq [r]} \prod_{g \in G} \left[                      
            \prod_{b \in \la^g} \ell(b)+1+ \a \, a_G(b) \right]^{(-1)^{r-|G|}} \\
            =\prod_{g \in [r]} \prod_{b \in \la^g}
            \left[ \prod_{G \subseteq [r] \atop  G \ni g} \bigg(\ell(b)+1+ \a \, a_G(b)\bigg)^{(-1)^{r-|G|}}
            \right].
            \label{EqTechnicalTHook}
    \end{multline}
    The expression inside the square bracket corresponds to $1+T_{[r]
      \setminus \{g\}}(\vv^b)$, where $(\vv^b)$ is a family indexed by
    subsets of $[r] \setminus \{g\}$ defined as follows: for a subset $I$ of $[r] \setminus \{g\}$ we set
    \[v^b_I := \ell(b)+1+ \a \ a_{I \cup \{g\}}(b).\]
    From \cref{EqArmLengthInSum}, we observe that $v^b_I$ is as in \cref{LemUsefulExample}
    with the following values of the parameters: $K=[r] \setminus \{g\}$, 
    $C=\ell(b)+1+ \a \, a_g(b)$, and $c_i = a_i(b)$ for $i \neq g$.
    Therefore we conclude that
    \[T_{[r] \setminus \{g\}}(\vv^b)= O\left(\a^{r-1}\right).\]
    Going back to \cref{EqTechnicalTHook}, we have:
    \[1 + T_{[r]}(\uu) =\prod_{g \in [r]} \prod_{b \in \la^g} \left(1 + T_{[r] \setminus \{g\}}(\vv^b)\right)=
    1+O\left(\a^{r-1}\right),\]
    which completes the proof.
\end{proof}

Let us now look at the second hook-polynomial $\hook'_\a$. 
If we try to follow the same argument as above, we want to apply \cref{LemUsefulExample}
with $K=[r] \setminus \{g\}$, $C=\ell(b)+\a (1+\, a_g(b))$, and $c_i = a_i(b)$ for $i \neq g$.
Note, however, that if the box $b$ has leg-length $0$, then $C = 0$ for $\a = 0$,
and in this case the hypothesis $C^{-1}=O(1)$ of \cref{LemUsefulExample} is not fulfilled.
To overcome this difficulty, we define
\[\hook''_\a(\la)= \prod_{\square \in \la \atop \ell(\square) \neq 0} \left(\a a(\square) + \ell(\square) +\a \right).\]
By definition, the top-most box of each column of a diagram $\la$ has leg-length $0$.
Moreover $\la$ has $m_i(\la^t)$ columns of height $i$, thus the arm-length of the
top-most boxes of these columns are $0$, $1$,\ldots, $m_i(\la^t)-1$ respectively.
Therefore,
\[\prod_{\square \in \la \atop \ell(\square) = 0} \left(\a a(\square) + \ell(\square) +\a \right)
= \a^{\la_1} \prod_i m_i(\la^t)!,\]
so that
\begin{equation}
    \hook'_\a(\la)= \a^{\la_1} \, \left( \prod_i m_i(\la^t)! \right)\, \hook''_\a(\la).
    \label{EqHPrimeHDPrime}
\end{equation}
Now, the exact same proof as for $\hook_\a$ yields the following result:
\begin{proposition}
Fix some partitions $\la^1$, \dots, $\la^r$
and set $v_I=\hook''_\alpha\left(\la^I \right)$ for all subsets $I$ of $[r]$.
    The family $(v_I)$ has the strong factorization property, and hence, the small cumulant property.
    \label{PropKumuHook2}
\end{proposition}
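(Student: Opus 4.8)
The plan is to mimic the proof of \cref{PropKumuHook} essentially verbatim, the only change being careful bookkeeping of which boxes contribute. As there, by \cref{PropEquivalenceSCQF} it suffices to establish the strong factorization property, and for this it is enough to show $T_{[r]}(\vv)=O(\a^{r-1})$: the case of a general subset $H$ follows by restricting the whole construction to subsets of $H$. The starting point is the geometric identification, introduced in the proof of \cref{PropKumuHook} (see \cref{fig:LegArmSum}), of the boxes of $\la^I$ with the boxes of the summands $\la^{i_s}$. The crucial feature of this identification is that the leg-length of a box $b\in\la^g$ is the \emph{same} in $\la^I$ as in $\la^g$; in particular, whether or not a box has leg-length $0$ does not depend on $I$. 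Hence the index set of boxes entering the definition of $\hook''_\a$ is stable under the identification, and, writing $a_I(b)=\sum_{i\in I}a_i(b)$ as in \cref{EqArmLengthInSum}, one obtains for every $G\subseteq[r]$
\[
v_G=\hook''_\a(\la^G)
= \prod_{g\in G}\ \prod_{\substack{b\in\la^g\\ \ell(b)\neq 0}}
\big(\ell(b)+\a(1+a_G(b))\big).
\]

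Next I would expand $1+T_{[r]}(\vv)$ by means of \cref{EqDefTDirect} and reorganize the resulting product exactly as in the passage leading to \cref{EqTechnicalTHook}, so that it runs first over $g\in[r]$ and over the boxes $b\in\la^g$ with $\ell(b)\neq0$, and only then over the subsets $G\ni g$. The inner factor is then precisely $1+T_{[r]\setminus\{g\}}(\vv^b)$, where $\vv^b$ is the family indexed by subsets of $[r]\setminus\{g\}$ given by $v^b_I:=\ell(b)+\a(1+a_{I\cup\{g\}}(b))$. By \cref{EqArmLengthInSum} this is of the shape required by \cref{LemUsefulExample}, with $K=[r]\setminus\{g\}$, $C=\ell(b)+\a(1+a_g(b))$ and $c_i=a_i(b)$ for $i\neq g$.

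The one place where the argument genuinely differs from the $\hook_\a$ case — and, indeed, the whole reason for introducing $\hook''_\a$ — is the verification of the hypothesis $C^{-1}=O(1)$ of \cref{LemUsefulExample}. For $\hook'_\a$ this hypothesis fails on the top-most box of each column, where $\ell(b)=0$ forces $C=\a(1+a_g(b))$ to vanish at $\a=0$. Restricting the product to boxes with $\ell(b)\neq0$ removes exactly these problematic boxes: now $\ell(b)\geq1$, so $C\to\ell(b)\neq0$ as $\a\to0$ and therefore $C^{-1}=O(1)$, while $C$ and the $c_i$ are manifestly $O(1)$, being (up to an $O(\a)$ correction) nonnegative integers. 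This is the step I expect to be the only real obstacle, and it is resolved entirely by the definition of $\hook''_\a$.

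Granting this, \cref{LemUsefulExample} yields $T_{[r]\setminus\{g\}}(\vv^b)=O(\a^{r-1})$ for every contributing box $b$, and hence
\[
1+T_{[r]}(\vv)=\prod_{g\in[r]}\ \prod_{\substack{b\in\la^g\\ \ell(b)\neq0}}\big(1+T_{[r]\setminus\{g\}}(\vv^b)\big)=1+O(\a^{r-1}).
\]
This is the strong factorization property for $(v_I)$, and the small cumulant property follows from \cref{PropEquivalenceSCQF}.
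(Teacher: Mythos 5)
Your proposal is correct and is exactly the paper's argument: the paper proves \cref{PropKumuHook2} by remarking that, once the leg-length-zero boxes are excluded in the definition of $\hook''_\a$ (so that $C=\ell(b)+\a(1+a_g(b))$ satisfies $C^{-1}=O(1)$), ``the exact same proof as for $\hook_\a$'' applies. You have simply written out that same proof in full, including the key observation that leg-lengths are preserved under the identification of boxes of $\la^I$ with boxes of the $\la^i$, which is what makes the restriction to boxes with $\ell(b)\neq 0$ compatible with the product rearrangement.
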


\section{Strong factorization property of Jack polynomials}
\label{sec:proof}

Let us fix partitions $\la^1,\dots,\la^r$, and for any subset $I
\subseteq [r]$ we define $u_I := J_{\la^I}$. The purpose of this
section is to prove \cref{theo:CumulantsJack};
with the notations above, it writes as
$\kumu^J(\la^1,\dots,\la^r) = \kumu_{[r]}(\uu) = O(\a^{r-1})$.
We start with some preliminary results.

\subsection{Preliminary results}
\label{subsec:preliminaryResults}

\begin{proposition}
\label{cor:CumulantsTopMonomialExpansion}
For any partitions $\la^1, \dots, \la^r$ the cumulant of Jack polynomials has a monomial expansion of the following form:
\[ \kumu^J(\la^1,\dots,\la^r) = \sum_{\mu < \la^{[r]}} c^{\la^1,\dots,\la^r}_\mu m_\mu + O(\a^{r-1}), \]
where the coefficients $c^{\la^1,\dots,\la^r}_\mu$ are polynomials in $\a$.
\end{proposition}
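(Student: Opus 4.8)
The plan is to split the statement into two independent claims: (i) every monomial coefficient of $\kumu^J(\la^1,\dots,\la^r)$ is a polynomial in $\a$, and (ii) the coefficient of the top monomial $m_{\la^{[r]}}$ is in fact $O(\a^{r-1})$. Granting these, I may take $c^{\la^1,\dots,\la^r}_\mu$ to be the genuine coefficient of $m_\mu$ in $\kumu^J$ for each $\mu<\la^{[r]}$, and absorb the remaining $m_{\la^{[r]}}$-term into the error. First I would record that $\kumu^J(\la^1,\dots,\la^r)=\ka_{[r]}(\uu)$ for the family $u_I:=J_{\la^I}$, so it is a $\Z$-linear combination of products $\prod_{B\in\pi}J_{\la^B}$ over $\pi\in\PPP([r])$. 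By Knop--Sahi each $J_{\la^B}$ has monomial coefficients in $\N[\a]$, in particular polynomial in $\a$; products and integer combinations preserve this, giving (i). I would also note the support statement: since the monomials occurring in $m_{\nu^1}\cdots m_{\nu^k}$ are indexed by partitions $\le\nu^1\oplus\dots\oplus\nu^k$ (a consequence of $\operatorname{sort}(a+b)\le\operatorname{sort}(a)\oplus\operatorname{sort}(b)$ together with monotonicity of $\oplus$ for the dominance order) and $\bigoplus_{B\in\pi}\la^B=\la^{[r]}$ for every $\pi$, each product $\prod_BJ_{\la^B}$, hence $\kumu^J$, is supported on $\{m_\mu:\mu\le\la^{[r]}\}$.

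The heart of the argument is a multiplicativity of the leading coefficient: for every $\pi\in\PPP([r])$,
\[ [m_{\la^{[r]}}]\ \prod_{B\in\pi}J_{\la^B}\;=\;\prod_{B\in\pi}\hook_\a(\la^B). \]
To prove this I would first observe that only the product of the top terms $\hook_\a(\la^B)\,m_{\la^B}$ of the factors can reach $m_{\la^{[r]}}$: if some factor $B_0$ contributes $m_{\nu^{B_0}}$ with $\nu^{B_0}<\la^{B_0}$, then the resulting monomials are indexed by partitions $\le\bigoplus_B\nu^B<\la^{[r]}$, using strict monotonicity of $\oplus$. It then remains to check that the dominant monomial $x^{\la^{[r]}}$ occurs in $\prod_{B}m_{\la^B}$ with coefficient exactly $1$. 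This is a short greedy/forcing argument: writing $\la^{[r]}=\sum_B\gamma^B$ with each $\gamma^B$ a rearrangement of $\la^B$, comparison at the first coordinate forces $\gamma^B_1=\la^B_1$ for all $B$ (because $\sum_B\la^B_1=\la^{[r]}_1$ is already the maximum attainable, and equality requires each $\gamma^B_1$ to equal its maximum), and tracking the remaining multisets gives $\gamma^B_j=\la^B_j$ for all $j$ by induction, so the decomposition is unique.

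Finally I would combine these. By linearity of coefficient extraction, the previous step yields
\[ [m_{\la^{[r]}}]\ \kumu^J(\la^1,\dots,\la^r)=\sum_{\pi\in\PPP([r])}\mu(\pi,\{[r]\})\prod_{B\in\pi}\hook_\a(\la^B)=\ka_{[r]}(\uu') \]
for the family $u'_I:=\hook_\a(\la^I)$. By \cref{PropKumuHook} this family has the small cumulant property, so $\ka_{[r]}(\uu')=\big(\prod_h\hook_\a(\la^h)\big)\,O(\a^{r-1})=O(\a^{r-1})$, the product of hook polynomials being polynomial and hence $O(1)$. Writing $\kumu^J=\sum_{\mu<\la^{[r]}}c_\mu m_\mu+\big([m_{\la^{[r]}}]\kumu^J\big)m_{\la^{[r]}}$ then gives the claimed form, the last summand being the $O(\a^{r-1})$ term. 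The only genuinely non-formal ingredient is the unitriangular/forcing computation of the leading coefficient; once it is in place everything reduces to \cref{PropKumuHook} and the polynomial dependence of Jack polynomials on $\a$. I therefore expect that forcing step — specifically, proving that the leading coefficient is multiplicative rather than carrying an extra combinatorial factor — to be the main, though modest, obstacle.
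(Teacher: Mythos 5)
Your proposal is correct and follows essentially the same route as the paper's proof: identify the coefficient of $m_{\la^{[r]}}$ in $\kumu^J(\la^1,\dots,\la^r)$ with the cumulant $\ka_{[r]}(\vv)$ of the hook family $v_I=\hook_\a(\la^I)$, and invoke \cref{PropKumuHook} to conclude that this top term is $O(\a^{r-1})$ while all lower terms have polynomial coefficients. The only difference is cosmetic: the unitriangularity fact $m_{\nu^1}m_{\nu^2}=m_{\nu^1\oplus\nu^2}+\sum_{\mu<\nu^1\oplus\nu^2}b^{\nu^1,\nu^2}_\mu m_\mu$, which you establish by your greedy/forcing argument, is stated in the paper as an observation without proof.
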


\begin{proof}
First, observe that for any partitions $\nu^1$ and $\nu^2$, one has
\[ m_{\nu^1}m_{\nu^2} = m_{\nu^1 \oplus \nu^2} + \sum_{\mu < \nu^1 \oplus \nu^2}b^{\nu^1,\nu^2}_\mu m_\mu,\]
for some integers $b^{\nu^1,\nu^2}_\mu$.

Fix partitions $\la^1, \dots, \la^r$ and a set partition $\pi=\{\pi_1,\dots,\pi_s\} \in \PPP([r])$.
Note that $\la^{\pi_1} \oplus \cdots \oplus \la^{\pi_s}=\la^{[r]}$.
Thanks to \cref{eq:JackInMonomial} and the above observation on products of monomials,
there exist coefficients $d^{\la^{\pi_1},\cdots,\la^{\pi_s}}_\mu \in \QQ[\a]$ such that:
\[ J_{\la^{\pi_1}}\cdots J_{\la^{\pi_s}} = \hook_\a(\la^{\pi_1})\cdots \hook_\a(\la^{\pi_s}) m_{\la^{[r]}} + \sum_{\mu < \la^{[r]}}d^{\la^{\pi_1},\cdots,\la^{\pi_s}}_\mu m_\mu.\]
As a consequence, there exist coefficients $c^{\la^1,\dots,\la^r}_\mu \in \QQ[\a]$ such that
\[ \kumu^J(\la^1,\dots,\la^r) = \kappa_{[\r]}(\vv)m_{\la^{[r]}} + \sum_{\mu < \la^{[r]}} c^{\la^1,\dots,\la^r}_\mu m_\mu,\]
where 
$v_I=\hook_\alpha\left( \la^{I} \right)$.
\cref{PropKumuHook} completes the proof.
\end{proof}

For any positive integer $r$ and for any partitions $\la^1, \dots, \la^r$ we define
\begin{equation}
\label{eq:CumulantPartition}
\IE(\la^1,\dots,\la^r) := \sum_{I \subseteq [r]}(-1)^{r-|I|}\ b\left(\la^I\right),
\end{equation}
where $b(\la)=\sum_i \binom{\la_i}{2}$, as defined in \eqref{eq:BinomialEigenvalue}.

\begin{proposition}
\label{prop:CumulantOfPartitionsVanishes}
Let $r \geq 3$ be a positive integer. Then, for any partitions $\la^1, \dots, \la^r$ one has:
\[ \IE(\la^1,\dots, \la^r) = 0.\]
\end{proposition}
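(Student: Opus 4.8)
The plan is to exploit the fact that $b(\lambda) = \sum_i \binom{\lambda_i}{2}$ is a quadratic function of the parts of $\lambda$, combined with the linearity of the map $\lambda \mapsto \lambda^I = \bigoplus_{i \in I} \lambda^i$ in the entries. The key observation is that the alternating sum $\sum_{I \subseteq [r]}(-1)^{r-|I|}$ applied to any quantity that is a polynomial of degree at most $r-1$ in the indicator variables $(x_1,\dots,x_r)$, where $x_i = \mathbf{1}[i \in I]$, vanishes identically. This is the standard finite-difference / inclusion-exclusion annihilation phenomenon: the operator $f \mapsto \sum_{I}(-1)^{r-|I|} f(I)$ is precisely the $r$-fold mixed finite difference $\Delta_1 \cdots \Delta_r$ evaluated at zero, which kills every monomial in $x_1,\dots,x_r$ that misses at least one variable.

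First I would fix a row index $k$ and write the $k$-th part of $\lambda^I$ as $\lambda^I_k = \sum_{i \in I} \lambda^i_k = \sum_{i=1}^r x_i\, \lambda^i_k$, a linear form in $x_1,\dots,x_r$. Then $\binom{\lambda^I_k}{2} = \tfrac{1}{2}\big(\sum_i x_i \lambda^i_k\big)^2 - \tfrac{1}{2}\sum_i x_i \lambda^i_k$, using that $x_i^2 = x_i$ since $x_i \in \{0,1\}$ may be applied after expansion but is not even needed. Summing over all rows $k$, the quantity $b(\lambda^I)$ becomes a polynomial in $(x_1,\dots,x_r)$ of total degree $2$. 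Consequently $\IE(\la^1,\dots,\la^r) = \sum_{I \subseteq [r]}(-1)^{r-|I|}\, b(\la^I)$ is the $r$-fold finite difference at the origin of a degree-$2$ polynomial in $r$ variables.

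Next I would invoke the annihilation lemma: for $r \ge 3$, every monomial $x_{i_1}\cdots x_{i_d}$ with $d \le 2 < r$ omits at least one variable $x_j$, and the alternating sum factors as a product over coordinates, one of which is $\sum_{x_j \in \{0,1\}}(-1)^{1-x_j} x_j^{0} = (-1)^{1} + (-1)^{0} = 0$ for the omitted coordinate $j$. Hence each monomial of degree at most $2$ contributes zero to the alternating sum, and therefore $\IE(\la^1,\dots,\la^r)=0$. Concretely, writing $b(\la^I) = \sum_{k}\big[\tfrac12 (\sum_i x_i \lambda^i_k)^2 - \tfrac12 \sum_i x_i \lambda^i_k\big]$ and expanding, every resulting term is a monomial in at most two of the variables $x_1,\dots,x_r$, so when $r \ge 3$ at least one variable is absent and its finite difference contributes a factor of $0$.

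The main point to get right — rather than a genuine obstacle — is the bookkeeping that the alternating sign $(-1)^{r-|I|}$ indeed realizes the product of one-dimensional finite-difference operators, so that the factorization argument applies termwise. There is no real difficulty here: the statement is purely combinatorial and the degree bound $2 < r$ is exactly what makes the inclusion-exclusion vanish. I would state and prove the annihilation lemma once (alternating sums of low-degree polynomials over the Boolean cube vanish), then apply it with $d=2$.
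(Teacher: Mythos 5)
Your proof is correct and is essentially the paper's own argument: both expand $b(\lambda^I)$ row by row as a quadratic in the parts, reducing everything to the vanishing of the alternating sums $\sum_{I \supseteq S}(-1)^{r-|I|}$ for a fixed set $S$ with $|S| \le 2 < r$. The only difference is presentational --- the paper computes the coefficients of $\lambda^1_1$, $\left(\lambda^1_1\right)^2$ and $\lambda^1_1\lambda^2_1$ explicitly, while you package the identical cancellation as a finite-difference annihilation lemma over the Boolean cube.
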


\begin{proof}
Expanding the definition and completing partitions with zeros, we have:
\[ \IE(\la^1,\dots,\la^r) = \sum_{j \ge 1} \sum_{I \subseteq [r]}(-1)^{r-|I|} \binom{\la^I_j}{2}.\]
In particular, it is enough to prove that the summand corresponding to any given $j \ge 1$ is equal to $0$.
In other terms, we can restrict ourselves to the case where $\la^i=(\la^i_1)$ has only one part.

In this case, $\IE(\la^1,\dots, \la^r)$ is a symmetric polynomial in $\la^1_1,\dots,\la^r_1$
of degree $2$ without constant term.
Moreover, its coefficients are given by:
\[ -\left[\la^1_1\right]\IE(\la^1,\dots, \la^r) = \left[\left(\la^1_1\right)^2\right]\IE(\la^1,\dots, \la^r) = \sum_{[1] \subseteq I \subseteq [r]}\frac{(-1)^{r-|I|}}{2} = 0,\] 
and 
\[ \left[\la^1_1\cdot \la^2_1\right]\IE(\la^1,\dots, \la^r) = \sum_{[2] \subseteq I \subseteq [r]}(-1)^{r-|I|} = 0.\] 
This completes the proof.
\end{proof}

Let us now define two functions that will be of great importance in the proof of \cref{theo:CumulantsJack}:
\begin{align}
\label{eq:A1}
A_1(\la^1,\dots,\la^r) &:= \sum_{\pi \in \PPP([r])} \left( \mu\big(\pi,\{[r]\}\big) \left(\sum_{B \in \pi}b\left(\la^B\right) \right)
\prod_{B \in \pi}J_{\la^B} \right),\\
\label{eq:A2}
A_2(\la^1,\dots,\la^r) &:= \sum_{\pi \in \PPP([r])} \mu\big(\pi,\{[r]\}\big)\ D_{1,2}\left(J_{\la^B}: B \in \pi\right),
\end{align}
where $D_{1,2}$ is a multivariate operator defined as follows: $D_{1,2}(f_1)=0$ and, for $k \ge 2$,
\begin{equation}
\label{eq:MixedDerivation}
D_{1,2}(f_1,\dots,f_k) := \sum_{1 \leq m \leq N}\sum_{1 \leq i < j \leq k}f_1\cdots \left(x_m\frac{\partial}{\partial x_m} f_i \right)\cdots \left(x_m\frac{\partial}{\partial x_m} f_j \right) \cdots f_k.
\end{equation}

We also recall that for any subset $I
\subseteq [r]$ we defined $u_I := J_{\la^I}$, thus
\[ \kumu_I(\uu) = \kumu^J(\la^i:i \in I).\]

\begin{lemma}
\label{lem:A1SimpleForm}
For any positive integer $r \geq 2$ and any partitions $\la^1, \dots, \la^r$, the following equality holds true:
\begin{multline*}
A_1(\la^1,\dots,\la^r) = b\left(\la^{[r]}\right)\kumu_{[r]}(\uu) + \frac{1}{2}\sum_{\emptyset \subsetneq I \subsetneq [r]} \IE\left(\la^I, \la^{I^c}\right)\kumu_I(\uu)\kumu_{I^c}(\uu),
\end{multline*}
where $I^c = [r] \setminus I$.
\end{lemma}

\begin{proof}
  We start by the following easy identity, following from \cref{EqMomCum}:
  \[\prod_{B \in \pi}J_{\la^B}= \sum_{\si \in \PPP([r]) \atop \si \le \pi} \left( \prod_{C \in \sigma} \kumu_C(\uu)\right).\]
Substituting this into the definition of $A_1$ --- \cref{eq:A1} --- we obtain
\[
A_1(\la^1,\dots,\la^r) = \sum_{\si \in \PPP([r])}\left(\sum_{\pi \in \PPP([r]); \pi \geq \si} \mu\big(\pi,\{[r]\}\big) \left(\sum_{B \in \pi}b\left(\la^B\right)\right)\right)\prod_{C \in \si}\kumu_C(\uu).
\]

Fix a set partition $\si \in \PPP([r])$. We claim that the value of
the biggest bracket is equal to
\begin{equation}
\label{eq:takietam}
\sum_{\pi \in \PPP([r]); \pi \geq \si} \mu\big(\pi,\{[r]\}\big) \left(\sum_{B \in \pi}b\left(\la^B\right)\right) = \IE\left(\la^B: B \in \si\right).
\end{equation}
Indeed, let us order the blocks of $\si$ in some way: $\si = \{B_1,\dots,B_{\#(\si)}\}$.
Partitions $\pi$ coarser than $\si$ are in bijection with partitions of the blocks of $\si$,
that is partitions of $[\#(\si)]$.
Therefore the left-hand side of \eqref{eq:takietam} can be rewritten as:
\[ \hspace{-5mm} \sum_{\pi \in \PPP([r]); \pi \geq \si} \mu\big(\pi,\{[r]\}\big) \left(\sum_{B \in \pi}b\left(\la^B\right)\right) =
\sum_{\rho \in \PPP([\#(\si)])} \mu\big(\rho,\{[\#(\si)]\}\big) \left(\sum_{C \in \rho}b\left( \bigoplus_{j \in C} \la^{B_j} \right)\right).\]
Fix some subset $C$ of $[\#(\si)]$. 
The coefficient of $b\left( \bigoplus_{j \in C} \la^{B_j} \right)$ in
the above sum is equal to
\[a_C:=\sum_{\rho \in \PPP([\#(\si)]) \atop C \in \rho} \mu\big(\rho,\{[\#(\si)]\}\big). \]
Set-partitions $\rho$ of $[\#(\si)]$ that have $C$ as a block are
uniquely expressed as $\{C\} \cup \rho'$,
where $\rho'$ is a set partition of $[\# (\si)]\setminus C$. Thus
\begin{multline*} 
a_C = \sum_{\rho' \in \PPP([\# (\si)]\setminus C)} \mu\big(\{C\} \cup \rho', \{[\# (\si)]\}\big) \\
= \sum_{0 \leq i \leq \# (\si)-|C|} S\big(\#(\si)-|C|,i\big)\ i!\ (-1)^i = (-1)^{\#(\si)-|C|}, 
\end{multline*}
where $S(n,k)$ is the \emph{Stirling number of the second kind} 
and the last equality comes from the relation
\[ \sum_{0 \leq k \leq n}S(n,k)\ (x)_k = x^n\]
evaluated at $x=-1$; here,
$(x)_k := x(x - 1)\cdots(x-k+1)$ denotes the falling factorial.
This finishes the proof of \cref{eq:takietam}.

This also completes the proof of the lemma by noticing that the right hand side of \cref{eq:takietam} vanishes for all set partitions $\si$ such that $\# (\si) \geq 3$, which is ensured by \cref{prop:CumulantOfPartitionsVanishes}.
\end{proof}

\begin{lemma}
\label{lem:A2SimpleForm}
For any positive integer $r \geq 2$ and any partitions $\la^1, \dots, \la^r$, the following equality holds true:
\begin{equation}
\label{eq:A2Identity}
A_2(\la^1,\dots,\la^r) = - \frac{1}{2}\sum_{1 \leq m \leq N}\sum_{\emptyset \subsetneq I \subsetneq [r]}\left(x_m\frac{\partial}{\partial x_m} \kumu_I(\uu) \right)\left(x_m\frac{\partial}{\partial x_m} \kumu_{I^c}(\uu) \right).
\end{equation}
\end{lemma}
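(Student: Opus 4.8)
The plan is to mimic the proof of \cref{lem:A1SimpleForm}: expand each Jack polynomial appearing in $A_2$ via the moment--cumulant relation \eqref{EqMomCum}, reorganize the resulting sum by Möbius inversion over the partition lattice, and check that all but the simplest terms cancel. Write $\theta_m := x_m\frac{\partial}{\partial x_m}$, which is a derivation of the polynomial ring; with this notation the operator $D_{1,2}$ is multilinear in its arguments (by its very definition \eqref{eq:MixedDerivation}) and satisfies, for a product, the Leibniz rule $\theta_m\big(\prod_{C} \kumu_C\big)=\sum_{C_0}\theta_m(\kumu_{C_0})\prod_{C\neq C_0}\kumu_C$.

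First I would substitute $J_{\la^B}=\sum_{\sigma_B\in\PPP(B)}\prod_{C\in\sigma_B}\kumu_C(\uu)$, which is \eqref{EqMomCum}, into each argument of $D_{1,2}(J_{\la^B}:B\in\pi)$. Since $D_{1,2}$ is multilinear, the choices $(\sigma_B)_{B\in\pi}$ assemble into a single set partition $\sigma\le\pi$ (with $\sigma_B=\sigma|_B$). Applying the Leibniz rule inside $D_{1,2}$, each term of $D_{1,2}(J_{\la^B}:B\in\pi)$ is then indexed by such a $\sigma$ together with an unordered pair $\{C',C''\}$ of blocks of $\sigma$ lying in \emph{different} blocks of $\pi$ (these are the two factors hit by $\theta_m$), and equals $\sum_m \theta_m(\kumu_{C'})\theta_m(\kumu_{C''})\prod_{C\in\sigma\setminus\{C',C''\}}\kumu_C$. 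Summing over $\pi$ with the weight $\mu(\pi,\{[r]\})$ and exchanging the order of summation, I obtain
\[
A_2=\sum_{1\le m\le N}\sum_{\sigma\in\PPP([r])}\ \sum_{\{C',C''\}\subseteq\sigma}\theta_m(\kumu_{C'})\theta_m(\kumu_{C''})\Big(\prod_{C\in\sigma\setminus\{C',C''\}}\kumu_C\Big)\, W(\sigma,C',C''),
\]
where $W(\sigma,C',C''):=\sum_{\pi\ge\sigma,\ C'\not\sim_\pi C''}\mu(\pi,\{[r]\})$ and $C'\not\sim_\pi C''$ means that $C'$ and $C''$ sit in distinct blocks of $\pi$.

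The crux---and the step I expect to require the most care---is evaluating this Möbius weight. Using the defining property $\sum_{\pi\ge\sigma}\mu(\pi,\{[r]\})=\delta_{\sigma,\{[r]\}}$, I would write $W(\sigma,C',C'')=\delta_{\sigma,\{[r]\}}-\sum_{\pi\ge\sigma,\ C'\sim_\pi C''}\mu(\pi,\{[r]\})$. The interval $[\sigma,\{[r]\}]$ is isomorphic to the partition lattice $\PPP([s])$ of the block set of $\sigma$, with $s=\#(\sigma)$, and the sublattice of partitions with $C'\sim C''$ is, via contraction of the two marked blocks, isomorphic to $\PPP([s-1])$ with top element preserved; since the Möbius function depends only on the isomorphism type of the interval, $\sum_{\pi\ge\sigma,\ C'\sim_\pi C''}\mu(\pi,\{[r]\})=\delta_{s-1,1}=\delta_{s,2}$. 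As any $\sigma$ admitting a pair $\{C',C''\}$ has $s\ge2$, the term $\delta_{\sigma,\{[r]\}}$ vanishes and $W(\sigma,C',C'')=-\delta_{\#(\sigma),2}$. Hence only two-block partitions $\sigma=\{I,I^c\}$ survive, the remaining product is empty, and
\[
A_2=-\sum_{1\le m\le N}\sum_{\{I,I^c\}}\theta_m(\kumu_I)\theta_m(\kumu_{I^c})=-\tfrac12\sum_{1\le m\le N}\sum_{\emptyset\subsetneq I\subsetneq[r]}\Big(x_m\tfrac{\partial}{\partial x_m}\kumu_I(\uu)\Big)\Big(x_m\tfrac{\partial}{\partial x_m}\kumu_{I^c}(\uu)\Big),
\]
the factor $\tfrac12$ accounting for the double counting of each unordered pair $\{I,I^c\}$ in the sum over proper nonempty $I$. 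This is exactly \eqref{eq:A2Identity}.
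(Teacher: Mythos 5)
Your proof is correct, but it runs in the opposite direction from the paper's and is a genuinely different argument. The paper starts from the right-hand side of \eqref{eq:A2Identity}: it expands the cumulants $\kumu_I(\uu)$, $\kumu_{I^c}(\uu)$ into moments via the defining formula \eqref{EqDefCumulants}, applies the Leibniz rule, and is then left with computing the coefficient of each term $V_{B^1,B^2;C_1,\dots,C_s}$; this requires evaluating $\sum_{k=0}^{s}\binom{s}{k}(-1)^k k!\,(-1)^{s-k}(s-k)!=(-1)^s(s+1)!$ and recognizing the answer as the M\"obius value \eqref{EqValueMobius} of the underlying set partition, after which the sum reassembles into $A_2$ via \eqref{eq:MixedDerivation}. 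You instead start from $A_2$ itself and expand each moment $u_B=J_{\la^B}$ into cumulants via \eqref{EqMomCum} --- exactly the strategy the paper uses for $A_1$ in \cref{lem:A1SimpleForm} --- reducing everything to the weight $W(\sigma,C',C'')=\sum_{\pi\ge\sigma,\ C'\not\sim_\pi C''}\mu\big(\pi,\{[r]\}\big)$, which you evaluate as $-\delta_{\#(\sigma),2}$ using only the M\"obius recursion $\sum_{\pi\ge\sigma}\mu\big(\pi,\{[r]\}\big)=\delta_{\sigma,\{[r]\}}$ and the contraction isomorphism $[\sigma',\{[r]\}]\cong\PPP([s-1])$. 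All your steps check out: the bijective indexing of the expansion of $D_{1,2}(J_{\la^B}:B\in\pi)$ by pairs $(\sigma,\{C',C''\})$ with $\sigma\le\pi$ and $C',C''$ in distinct blocks of $\pi$, the evaluation of $W$, and the final factor $\tfrac12$ converting unordered two-block splits into a sum over proper nonempty $I$. As for what each route buys: yours is more conceptual (the survival of only two-block partitions falls out of the M\"obius recursion with no factorial identities, and $A_1$, $A_2$ get a uniform treatment), while the paper's direction only ever differentiates moments, never products of cumulants, so it avoids the nested Leibniz expansion inside $D_{1,2}$ at the price of an explicit binomial--factorial computation and the need to recognize the M\"obius function at the end.
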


\begin{proof}
    Let us call $\RHS$ the right-hand side of \cref{eq:A2Identity}.
    Using the definition of cumulants and Leibniz rule for the operator $x_m\frac{\partial}{\partial x_m}$ we get
    \[-2 \RHS =  \sum_{\emptyset \subsetneq I \subsetneq [r] \atop \pi^1 \in \PPP(I), \pi^2 \in \PPP(I^c)}
    \mu\big(\pi^1,\{I\}\big)\ \mu\big(\pi^2,\{I^c\}\big) \left( \sum_{B^1 \in \pi^1 \atop B^2 \in \pi^2}
    V_{B^1,B^2;C_1,\ldots,C_s} \right),\]
    where $C_1,\dots,C_s$ are the blocks of $\pi^1$ and $\pi^2$ distinct from $B^1$ and $B^2$
    and 
    \[V_{B^1,B^2;C_1,\ldots,C_s} = \sum_{1 \le m \le N}
    \left( x_m\frac{\partial}{\partial x_m} u_{B^1} \right) \left( x_m\frac{\partial}{\partial x_m} u_{B^2} \right)
    \left( \prod_{i=1}^s u_{C_i} \right).\]
    Fix some partition $\{B^1,B^2,C_1,\dots,C_s\}$ with two marked blocks $B^1$ and $B^2$
    (the order of two marked blocks matters)
    and consider the coefficient of $V_{B^1,B^2;C_1,\ldots,C_s}$ in $-2\RHS$.
    Pairs of set partitions $(\pi^1,\pi^2)$ contributing to this coefficient are obtained as follows:
    take a subset $J$ of $[s]$ and set
    \[\pi^1 = B^1 \cup \{C_j, j \in J \}, \ \pi^2=B^2 \cup \{C_j, j \in [s] \setminus J \}.\]
    Then $\mu(\pi^1,\{I\})=(-1)^{|J|} (|J|)!$ and $\mu(\pi^2,\{I^c\})=(-1)^{s-|J|} (s-|J|)!$.
    Thus the coefficient of $V_{B^1,B^2,C_1,\ldots,C_s}$ in $-2\RHS$
    is equal to
    \begin{multline*} \sum_{J \subset [s]} (-1)^{|J|} (|J|)! (-1)^{s-|J|} (s-|J|)!
    = \sum_{k=0}^s \binom{s}{k} (-1)^{k} k! (-1)^{s-k} (s-k)!\\
    = \sum_{k=0}^s (-1)^s s! = (-1)^s (s+1)!.
\end{multline*}
    Finally, we get
    \[\RHS =
    \frac{1}{2} \sum_{ \{B^1,B^2;C_1,\cdots,C_s\} } (-1)^{s+1} (s+1)! \, V_{B^1,B^2;C_1,\dots,C_s} ,\]
    where the sum runs over set partitions $\{B^1,B^2;C_1,\dots,C_s\}$ with two {\em ordered} marked blocks.
    Note that $(-1)^{s+1} (s+1)!$ is simply the Möbius function of the underlying set partition
    (forgetting the marked blocks) and that one can remove the factor $1/2$ by summing over
    set partitions with two {\em unordered} marked blocks.

    On the other hand, from the definition of $D_{1,2}$ --- \cref{eq:MixedDerivation} ---
    for any set partition $\pi$, one has:
    \[D_{1,2}(u_B; B \in \pi) = \sum_{\ldots} V_{B^1,B^2;C_1,\ldots,C_s},\]
    where the sum runs over all ways to mark (in an unordered way) two blocks of $\pi$;
    the resulting marked partition is then denoted $\{B^1,B^2;C_1,\dots,C_s\}$ as usual.
    Therefore, one has
    \[\sum_{\pi \in \PPP([r])} \mu\big(\pi,\{[r]\}\big) D_{1,2}(u_B; B \in \pi) = \RHS,\]
    as claimed in the lemma.
\end{proof}

\subsection{Proof of \cref{theo:CumulantsJack}}

\begin{proof}[Proof of \cref{theo:CumulantsJack}]
The proof will by given by induction on $r$.
For $r=1$, we want to prove that the Jack polynomial $J^{(\a)}_\la$ has no singularity in $\a=0$.
This follows, {\em e.g.}, from the specialization for $\a=0$ given in \cref{eq:JackA0}.
Moreover, we observed before stating \cref{theo:CumulantsJack} that the case $r=2$
also follows from \cref{eq:JackA0}.

Let us assume that the statement holds true for all $m < r$.
Notice first that, by Leibniz rule, for any $f_1,\dots,f_k \in \Symm$, one has the following expansions:
\begin{align*}
D_1 \left(f_1\cdots f_k\right) = & \sum_{1 \leq i \leq k}f_1\cdots \left(D_1 f_i \right)\cdots f_k;\\
D_2 \left(f_1\cdots f_k\right) = & \sum_{1 \leq i \leq k}f_1\cdots \left(D_2 f_i \right)\cdots f_k\\
+ & D_{1,2}(f_1,\dots,f_k),
\end{align*}
where $D_{1,2}$ is given by \cref{eq:MixedDerivation}.

Fix some partitions $\la^1$, \ldots, $\la^r$ and a set partition $\pi$ of $[r]$.
Then, one has
\begin{align*}
D_\a \left(J_{\la^{\pi_1}}\cdots J_{\la^{\pi_s}}\right) &= \sum_{1 \leq i \leq s}J_{\la^{\pi_1}}\cdots \left(D_\a J_{\la^{\pi_i}} \right)\cdots J_{\la^{\pi_s}} + \a D_{1,2} \left(J_{\la^{\pi_1}},\dots, J_{\la^{\pi_s}}\right) \\
&= \left(\sum_{1 \leq i \leq s}\bigg((N-1)|\la^{\pi_i}| - b\left((\la^{\pi_i})^t\right)\bigg)\right)J_{\la^{\pi_1}}\cdots J_{\la^{\pi_s}}\\
&+ \a \left(\left(\sum_{1 \leq i \leq s}b(\la^{\pi_i})\right)J_{\la^{\pi_1}}\cdots J_{\la^{\pi_s}} + D_{1,2} \left(J_{\la^{\pi_1}},\dots, J_{\la^{\pi_s}}\right)\right)\\
&= \bigg((N-1)\big|\la^{[r]}\big| - b\left(\left(\la^{[r]}\right)^t\right)\bigg)J_{\la^{\pi_1}}\cdots J_{\la^{\pi_s}}\\
&+ \a \left(\left(\sum_{1 \leq i \leq s}b(\la^{\pi_i})\right)J_{\la^{\pi_1}}\cdots J_{\la^{\pi_s}} + D_{1,2} \left(J_{\la^{\pi_1}},\dots, J_{\la^{\pi_s}}\right)\right),
\end{align*}
where the second equality comes from \cref{prop:Laplace}. 
Multiplying by the appropriate value of the Möbius function
and summing over set partitions $\pi$, it gives us the following identity:
\begin{multline}
\label{eq:BeltramiCumulantExpansion}
D_\a \kumu_{[r]}(\uu) = \left((N-1)\big|\la^{[r]}\big| - b\left(\left(\la^{[r]}\right)^t\right)\right)\kumu_{[r]}(\uu)\\
+ \a \left(A_1(\la^1,\dots,\la^r) + A_2(\la^1,\dots,\la^r) \right),
\end{multline}
where $A_1$ and $A_2$ are given by \cref{eq:A1} and \cref{eq:A2}, respectively.

Consider the coefficient of $\a^j$ in the above expression.
We have
\begin{multline*}
[\a^j]D_\a \kumu_{[r]}(\uu) 
= \left((N-1)\big|\la^{[r]}\big| - b\left(\left(\la^{[r]}\right)^t\right)\right)[\a^j]\kumu_{[r]}(\uu) \\
+ [\a^{j-1}]\left(A_1(\la^1,\dots,\la^r) + A_2(\la^1,\dots,\la^r) \right).
\end{multline*}
On the other hand, since $D_\a=D_1+\a \, D_2$, one has
\begin{equation}
\label{eq:InductiveHypothesisBaltrami1}
[\a^j]D_\a \kumu_{[r]}(\uu) = D_1\left([\a^j]\kumu_{[r]}(\uu) \right) + D_2\left([\a^{j-1}]\kumu_{[r]}(\uu) \right).
\end{equation}
Comparing both expressions, we have the following identity, which will be a key tool in the proof:
\begin{multline}
\label{eq:InductiveHypothesisBaltrami2}
D_1\left([\a^j]\kumu_{[r]}(\uu) \right) + D_2\left([\a^{j-1}]\kumu_{[r]}(\uu) \right) \\
= \left((N-1)\big|\la^{[r]}\big| - b\left(\left(\la^{[r]}\right)^t\right)\right)[\a^j]\kumu_{[r]}(\uu) \\
+ [\a^{j-1}]\left(A_1(\la^1,\dots,\la^r) + A_2(\la^1,\dots,\la^r) \right).
\end{multline}
\medskip

We recall that our goal is to prove that
\begin{equation}
\label{eq:InductiveHypothesis}
[\a^j]\kumu_{[r]}(\uu) = 0
\end{equation}
for any $0 \leq j \leq r-2$. We proceed by induction on $j$.

Consider the case $j=0$.
Since $\kumu_{[r]}(\uu)$, $A_1$ and $A_2$ are polynomials in $\a$,
\cref{eq:InductiveHypothesisBaltrami2} simplifies 
in this case to
\[ D_1 f = \left((N-1)\big|\la^{[r]}\big| - b\left(\left(\la^{[r]}\right)^t\right)\right) f,\]
where $f = [a^0]\kumu_{[r]}(\uu)$.
Thanks to \cref{cor:CumulantsTopMonomialExpansion} we know that $f$ satisfies 
the assumptions of \cref{cor:EigenvectorsVanish} and hence it is equal to zero.

Now, we fix $j \leq r-2$, and we assume that $[\a^i]\kumu_{[r]}(\uu) = 0$ holds true for all $ 0 \leq i < j$. We are going to show that it holds true for $i=j$ as well.

Since $[\a^{j-1}]\kumu_{[r]}(\uu) = 0$ by the inductive hypothesis, \cref{eq:InductiveHypothesisBaltrami2}  reads
\begin{multline*}
D_1\left([\a^j]\kumu_{[r]}(\uu)\right) = \left((N-1)\big|\la^{[r]}\big| - b\left(\left(\la^{[r]}\right)^t\right)\right)[\a^j]\kumu_{[r]}(\uu)\\
+ [\a^{j-1}]\left(A_1(\la^1,\dots,\la^r) + A_2(\la^1,\dots,\la^r) \right).
\end{multline*}

First, we claim that \hbox{$[\a^{j-1}]A_1(\la^1,\dots,\la^r) = 0$.}
Indeed, from the induction hypothesis, for each subset $I$ with $\emptyset \subsetneq I \subsetneq [r]$, one has $\kumu_I(\uu) = O(\a^{|I|-1})$ and \hbox{$\kumu_{I^c}(\uu) = O(\a^{|I^c|-1}) = O(\a^{r-|I|-1})$}.
We then use \cref{lem:A1SimpleForm} and write:
\begin{multline*}
[\a^{j-1}]A_1(\la^1,\dots,\la^r) = b\left(\la^{[r]}\right)[\a^{j-1}]\kumu_{[r]}(\uu) \\
+ \frac{1}{2}\sum_{\emptyset \subsetneq I \subsetneq [r]} [\a^{j-1}] \IE\left(\la^I, \la^{I^c}\right)\kumu_I(\uu)\kumu_{I^c}(\uu)= 0,
\end{multline*}
since $j-1 < r-2$. 

Similarly, one can prove that $[\a^{j-1}]A_2(\la^1,\dots,\la^r) = 0$.
Indeed, using a similar argument as before, we have 
\begin{equation*}
\frac{1}{2}\sum_{1 \leq m \leq N}\sum_{\emptyset \subsetneq I \subsetneq [r]}\left(x_m\frac{\partial}{\partial x_m} \kumu_I(\uu) \right)\left(x_m\frac{\partial}{\partial x_m} \kumu_{I^c}(\uu) \right)= O(\a^{r-2}).
\end{equation*}
But, from \cref{lem:A2SimpleForm}, the left-hand side is $A_2(\la^1,\dots,\la^r)$.
Since $j-1<r-2$, we know that $[\a^{j-1}]A_2(\la^1,\dots,\la^r) = 0$, as wanted.

Above computations show that \cref{eq:InductiveHypothesisBaltrami2} simplifies to
\[ D_1 f = \left((N-1)\left|\la^{[r]}\right| -
  b\left(\left(\la^{[r]}\right )^t\right)\right) f,\]
where $f = [a^j]\kumu_{[r]}(\uu)$.
Again, thanks to \cref{cor:CumulantsTopMonomialExpansion} we know that $f$ satisfies assumptions from \cref{cor:EigenvectorsVanish} and thus it is equal to zero, which finishes the proof.
\end{proof}

\section{Polynomiality in $b$-conjecture}
\label{sec:polynomiality}

\subsection{Cumulants and Young diagrams}
\label{SectCumYoung}
Consider a function $F$ on Young diagrams and some diagrams $\la^1,\dots,\la^r$.
Then we consider the family defined by
(recall that we use $\oplus$ for entry-wise sum of partitions):
\begin{equation}
    u_I = F\left( \bigoplus_{i \in I} \la^i \right).  
    \label{EqFamilyFLambda}
\end{equation}
\begin{definition}
    We say that a function $F$ on Young diagrams has the small cumulant property
    if, for any $r\ge 1$ and {\em for any partitions $\la^1,\dots,\la^r$},
    the above-defined family has the small cumulant property (in the
    sense of \cref{PropEquivalenceSCQF}).
\end{definition}
With this notation, the results of the previous sections can be
restated as follows:
\begin{description}
    \item[\cref{theo:CumulantsJack}]
        For a fixed alphabet $\bm{x}$, the function $\la \mapsto J_\la^\a(\bm{x})$ has the small cumulant property.
    \item[\cref{PropKumuHook}]
        The function $\hook_\a$ has the small cumulant property.
    \item[\cref{PropKumuHook2}] 
        The function $\hook''_\a$ has the small cumulant property.
    \item[\cref{corol:stable_product}] 
        If $F_1$ and $F_2$ have the small cumulant property and take non-zero values,
        then so have $F_1 \cdot F_2$ and $F_1/F_2$.
\end{description}
As a consequence, the function
\[ \la \mapsto \frac{1}{\hook_\a(\la) \hook''_\a(\a)} J_\la^\a(\bm{x}) J_\la^\a(\bm{y}) J_\la^\a(\bm{z}) \]
has the small cumulant property.
We will use that later in this section.
\bigskip

\begin{remark}
Another consequence is that the function $\la \mapsto \frac{J_\la^\a(\bm{x})}{\hook_\a(\la)}$
also has the small cumulant property.
We will not use this result here, but since
this function is the standard $P$-normalization of Jack polynomials,
we decided to mention it here.
\end{remark}

\subsection{Cumulants and logarithm}
Let $\ttt = (t_1,t_2,\dots)$ be an infinite alphabet of formal variables.
We use the notation $\ttt^\la=t_{\la_1} \cdots t_{\la_r}$.
\begin{lemma}
    Let $F$ be a function on Young diagrams.
Denote $\ka^F(\la^1,\dots,\la^r)$ the cumulant $\ka_{[r]}(\uu)$,
where $\uu$ is defined by \cref{EqFamilyFLambda}.
Then we have the following equality of formal power series in $\ttt$:
\[ \log \sum_\la \frac{F(\la)}{\a^{\la_1} \prod_i m_i(\la^t)!} \ttt^{\la^t} 
= \sum_{r \ge 1} \frac{1}{r! \alpha^r} \sum_{(j_1,\dots,j_r)} \ka^F(1^{j_1},\dots,1^{j_r})\ t_{j_1} \cdots t_{j_r}.    \]
    \label{LemLogCumulants}
\end{lemma}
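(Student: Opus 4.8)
The plan is to reindex the argument of the logarithm by the \emph{columns} of the diagrams and then recognize the resulting expression through the multivariate exponential formula relating the moments $u_{[r]}$ to their cumulants $\ka_{[r]}(\uu)$. The starting observation is that a partition is the entry-wise sum of its columns: writing $\la^t=(c_1,\dots,c_{\la_1})$ for the conjugate partition, one has $\la=\bigoplus_{s=1}^{\la_1}1^{c_s}$, because the $i$-th entry of $\bigoplus_s 1^{c_s}$ counts the columns of height at least $i$, which is exactly $\la_i$. In particular $\la_1$ is the number of columns and $m_i(\la^t)$ is the number of columns of height $i$. I also use the normalization $F(\emptyset)=1$ (as for the families in \cref{sec:cumulants}), which makes the constant terms on both sides agree, the right-hand sum starting at $r\ge1$.

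First I would rewrite the left-hand side as a sum over ordered sequences of column heights. Reindexing by $\nu=\la^t$, the argument of the logarithm becomes $\sum_\nu F(\nu^t)\,\ttt^\nu/\big(\a^{\ell(\nu)}\prod_i m_i(\nu)!\big)$. A partition $\nu$ with $\ell(\nu)=r$ arises from exactly $r!/\prod_i m_i(\nu)!$ tuples $(j_1,\dots,j_r)$ of positive integers, each satisfying $\bigoplus_{i=1}^r 1^{j_i}=\nu^t$ and $t_{j_1}\cdots t_{j_r}=\ttt^\nu$. Hence the overcounting factor cancels the denominator $\prod_i m_i(\nu)!$ and leaves $1/r!$, giving
\[
\sum_{\la}\frac{F(\la)}{\a^{\la_1}\prod_i m_i(\la^t)!}\,\ttt^{\la^t}
=\sum_{r\ge 0}\frac{1}{r!\,\a^r}\sum_{(j_1,\dots,j_r)} F\!\left(\bigoplus_{i=1}^r 1^{j_i}\right) t_{j_1}\cdots t_{j_r}.
\]

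Next, for fixed $j_1,\dots,j_r$ set $u_I=F(\bigoplus_{i\in I}1^{j_i})$, so that $F(\bigoplus_{i=1}^r 1^{j_i})=u_{[r]}$ and, by definition, $\ka_{[r]}(\uu)=\ka^F(1^{j_1},\dots,1^{j_r})$. The moment–cumulant inversion \eqref{EqMomCum} gives $u_{[r]}=\sum_{\pi\in\PPP([r])}\prod_{B\in\pi}\ka_B(\uu)$. Substituting this into the display above and applying the exponential formula — whereby the sum over triples (number of columns $r$, coloring of the columns by heights, set partition $\pi\in\PPP([r])$) weighted by $1/(r!\,\a^r)$ factorizes as the exponential of its connected part $\pi=\{[r]\}$ — yields exactly $\exp$ of the right-hand series. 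Taking logarithms then gives the claimed identity.

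The only step requiring care is this last exponential formula. It is the colored version of the classical identity $\sum_{n\ge0}f_n/n!=\exp\big(\sum_{n\ge1}c_n/n!\big)$ with $f_n=\sum_{\pi\in\PPP([n])}\prod_{B\in\pi}c_{|B|}$, where the $n$ labelled objects are the columns, the colors are the heights, $c_n=\sum_{(j_1,\dots,j_n)}\ka^F(1^{j_1},\dots,1^{j_n})\,t_{j_1}\cdots t_{j_n}$, and $f_n=\sum_{(j_1,\dots,j_n)}F(\bigoplus_i 1^{j_i})\,t_{j_1}\cdots t_{j_n}$; the relation $f_n=\sum_\pi\prod_B c_{|B|}$ is precisely \eqref{EqMomCum}, using that a coloring of $[n]$ restricts to independent colorings of the blocks of $\pi$. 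The extra factor $\a^{-n}$ is multiplicative over blocks, since $\a^{-n}=\prod_{B\in\pi}\a^{-|B|}$, so it preserves the identity and reproduces the weights $1/(r!\,\a^r)$. At the level of a fixed monomial the verification reduces to the elementary multinomial bookkeeping $\tfrac{N!}{\prod_l n_l!}=\binom{N}{n_1,\dots,n_m}$ counting ordered set partitions, with the $1/r!$ and $1/m!$ factors accounting for the labelling of columns and the unordering of blocks respectively.
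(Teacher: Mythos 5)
Your proof is correct, and it follows a genuinely different route from the one in the paper. The paper's proof is a direct coefficient comparison: it expands the logarithm through its Taylor series, expands the cumulants on the right through the M\"obius-function definition \eqref{EqDefCumulants}, and matches the coefficient of each product $F(\la^1)\cdots F(\la^s)\,\ttt^{(\la^1)^t}\cdots\ttt^{(\la^s)^t}$ on the two sides; this requires explicit bookkeeping with automorphism factors $|\Aut(\la^1,\dots,\la^s)|$, counts of reorderings, and the M\"obius values $(-1)^{s-1}(s-1)!$. You instead reindex the argument of the logarithm by ordered tuples of column heights, recognizing it as a colored, $\a$-weighted moment generating series, and then conclude from the moment--cumulant inversion \eqref{EqMomCum} combined with the classical (colored) exponential formula, so that the argument of the logarithm is exactly $\exp$ of the right-hand side. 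What your route buys: all the M\"obius/automorphism bookkeeping is absorbed once and for all into the exponential formula, whose elementary multinomial verification you sketch correctly, and your argument makes precise the remark the paper itself places after its proof --- that this lemma is an instance of the Leonov--Shiryaev fact that cumulants are the coefficients of the logarithm of the moment generating series. What the paper's route buys: it is entirely self-contained, with no appeal to the exponential formula as an external ingredient. Two details you were right to handle explicitly: the normalization $F(\emptyset)=1$, which the paper leaves implicit but which is needed for the formal logarithm (and holds in the application in \cref{sec:polynomiality}); and the fact that a coloring of $[r]$ restricts independently to the blocks of a set partition, which --- together with the homogeneity of degree $r$ in $\ttt$ that keeps all series formally well defined --- is exactly what validates the colored exponential formula.
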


\begin{proof}
    Both sides expand as linear combinations of products
    \[F_{\la^1,\ldots,\la^s} := F(\la^1) \cdots F(\la^s) \, \ttt^{(\la^1)^t} \cdots \ttt^{(\la^s)^t}, \]
    where $\la^1$, \dots, $\la^s$ are partitions.
    Fix some partitions $\la^1$, \dots, $\la^s$.
    The coefficient of $F_{\la^1,\dots,\la^s}$ on the left-hand side is given by
    \begin{equation}
        \frac{(-1)^{s-1}}{s} \, \frac{s!}{|\Aut(\la^1,\dots,\la^s)|} \,  \prod_{h=1}^s \frac{1}{\a^{\la^h_1} \prod_i m_i\left((\la^h)^t\right)!}.
        \label{EqCoefLHS}
    \end{equation}
    Here, $|\Aut(\la^1,\dots,\la^s)|$ denotes the number of permutations $\sigma$ of size $s$
    such that $\la^j=\la^{\sigma(j)}$ for all $j \le s$.
\medskip

    The situation on the right-hand side is more intricate. First, rewrite it as
    \begin{multline}
        \label{EqRHS}
        \sum_{r \ge 1} \frac{1}{r! \alpha^r} \sum_{(j_1,\dots,j_r)} \ka^F(1^{j_1},\dots,1^{j_r}) t_{j_1} \cdots t_{j_r}\\
    = \sum_{r \ge 1} \, \sum_{(j_1,\dots,j_r)} \, \sum_{\pi \in \PPP([r])} \frac{\mu(\pi,\{[r]\})}{r! \alpha^r}
    \prod_{B \in \pi} F\left(\bigoplus_{h \in B} 1^{j_h} \right) t_{j_1} \cdots t_{j_r}.
\end{multline}
    We are interested in which summation indices contribute to the coefficient of $F_{\la^1,\ldots,\la^s}$,
    that is indices such that one has the following equality of the multisets
    \[ \left\{ \bigoplus_{h \in B} 1^{j_h}, B \in \pi \right\}=\{\lambda^h, 1 \le h \le s\}.\]
    First, $(j_1,\dots,j_r)$ should be a reordering of list of column lengths in $\la^1,\dots,\la^s$.
    If $m'_i$ denotes the number of $i$ in this list of column lengths, 
    there are $r!/(\prod_i m'_i!)$ such reordering and each gives the same contribution to the coefficient of $F_{\la^1,\ldots,\la^s}$. 
    We now suppose that we have fixed such a reordering $(j_1,\dots,j_r)$.

    By definition, the number of columns of length $i$ in $\la^j$ is $m_i((\la^j)^t)$ .
    Then the number of {\em ordered} set partitions $(B_1,\dots,B_s)$ of $[r]$ such that 
    \[\bigoplus_{b \in B_h} 1^{j_b} = \lambda^h \ \text{for }1 \le h \le s\]
    is $(\prod_i m_i!)/\left(\prod_{i,j} m_i\left((\la^j)^t\right)!\right)$.
    Indeed, for each value $i$, one has to choose
    $m_i\left((\la^1)^t\right)$ entries equal to $i$ in the list $(j_1,\dots,j_r)$ that go in $B_1$,
    $m_i\left((\la^2)^t\right)$ entries equal to $i$ that go in $B_2$, and so on.
    This gives for each $i$ a multinomial $m'_i!/\left(\prod_j m_i\left((\la^j)^t\right)\right)$, as claimed.
    But we want to count (unordered) set partitions and not ordered set partitions as above,
    so that we should divide by $|\Aut(\la^1,\dots,\la^s)|$.

    All these set partitions have $s$ blocks so that the corresponding value of the Möbius function
    is $\mu(\pi,\{[r]\})=(-1)^{s-1} (s-1)!$.

    Finally, the coefficient of $F_{\la^1,\ldots,\la^s}$ in \cref{EqRHS} is
    \begin{equation}
        \frac{r!}{\prod_i m_i!} \, \frac{\prod_i m_i!}{\prod_{i,j} m_i\left((\la^j)^t\right)} \,
    \frac{1}{|\Aut(\la^1,\dots,\la^s)|} \, \frac{(-1)^{s-1} (s-1)!}{r! \alpha^r},
    \label{EqCoefRHS}
\end{equation}
    where $r$ is the total number of columns in the $\la^1,\dots,\la^s$, that is $r = \sum_h \la^h_1$.
    \medskip

    Comparing \cref{EqCoefLHS} and \cref{EqCoefRHS}, we get our result.
\end{proof}

\begin{remark}
    The statement and proof of this lemma are similar to the fact
    that cumulants can be alternatively defined as a sum over set partitions
    or as coefficients in the generating series of the logarithm of the moment generating series;
    see, {\em e.g.} \cite[Eqs (3) and (II.c)]{LeonovShiryaev1959}.
\end{remark}

\subsection{Conclusion}
We have now all the tools needed to prove the polynomiality in $b$-conjecture.
\begin{proof}[Proof of \cref{theo:Polynomiality}]
    Thanks to \cref{CorNoPolesOutsideZero}, it is enough to prove
    that $h_{\mu, \nu}^\tau(\beta)$ has no pole in $\a=0$, {\em i.e.}
    that $h_{\mu, \nu}^\tau(\beta)=O(1)$.
    From  \cref{EqDefH}, this amounts to establish that
    \[
\log \left( 
\sum_{\tau \in \PPP} \frac{J_\tau^{(\a)}(\xx) \, J_\tau^{(\a)}(\yy) \, J_\tau^{(\a)}(\zz)
\, t^{|\tau|}}{\hook_\a(\la) \hook'_\a(\la)} \right)\,
= O(\a^{-1}). \] 
But, using \cref{EqHPrimeHDPrime}, we see that this quantity is the left-hand side of
    Lemma~\ref{LemLogCumulants} for 
    \[F(\la)=\frac{1}{\hook_\a(\la) \hook''_\a(\a)} J_\la^\a(\bm{x}) J_\la^\a(\bm{y}) J_\la^\a(\bm{z}) \]
    and $t_1=t_2=\cdots=t$.
    It was observed at the end of \cref{SectCumYoung} that this function $F$
    has the small cumulant property.
    Therefore, for any $j_1,\dots,j_r$,
    the cumulant $\ka^F(1^{j_1},\dots,1^{j_r})$ is $O(\a^{r-1})$ and, thus,
    the right-hand side of \cref{LemLogCumulants} is $O(\a^{-1})$.
    This finishes the proof of the polynomiality.

    The bound on the degree follows from the polynomiality and work of La Croix,
    see \cite[Lemma 5.7 and Theorem 5.18]{LaCroix2009}.
\end{proof}

\section*{Acknowledgements}
We would like to thank P. \'Sniady for many discussions and collaborations on related topics
and an anonymous referee for pointing out a confusion between lists and sets in Sections 3.2 and 3.3.

\bibliographystyle{alpha}

\bibliography{biblio2015}

\end{document}